\documentclass{article}
\usepackage{amssymb, amsmath, latexsym, amsthm}
\usepackage[activeacute, activegrave, english]{babel}
\usepackage{hyperref}
\hypersetup{urlcolor=blue} 
\providecommand{\keywords}[1]{\textbf{Keywords:} #1}
\usepackage[sort&compress,square,comma,authoryear]{natbib}
\usepackage{longtable}
\usepackage{authblk}

\usepackage{tikz}
\usetikzlibrary{positioning,arrows,calc}
\tikzset{
modal/.style={>=stealth',shorten >=1pt,shorten <=1pt,auto,node distance=1.5cm,
semithick},
real world/.style={double,circle,draw,thick,align=center},
world/.style={circle,draw,minimum size=0.5cm,fill=gray!15},
point/.style={circle,draw,inner sep=0.5mm,fill=black},
reflexive above/.style={->,loop,looseness=7,in=120,out=60},
reflexive below/.style={->,loop,looseness=7,in=240,out=300},
reflexive left/.style={->,loop,looseness=7,in=150,out=210},
reflexive right/.style={->,loop,looseness=7,in=30,out=330}
}

\usepackage[toc]{appendix}

\newtheorem{defi}{Definition}
\newtheorem{prop}{Proposition}
\newtheorem{theor}{Theorem}
\newtheorem{lemma}{Lemma}

\date{}

\author{
  Ekaterina Kubyshkina\footnote{Department of Philosophy, University of Milan, \texttt{ekaterina.kubyshkina@unimi.it}}

  \and
  Mattia Petrolo\footnote{LASIGE Computer Science and Engineering Research Centre, University of Lisbon, \texttt{mpetrolo@fc.ul.pt}}

}

\title{Ignorance with(out) Grasping}

\begin{document}

\maketitle

\abstract

In this work, we argue that ignorance can be inherently understood as a hyperintensional notion. When faced with two logically or necessarily equivalent propositions, an agent may be ignorant of one while not of the other. To capture formally this intuition, we employ a topic-sensitive semantics, enabling the modeling of an agent's attitude toward the content of a proposition. Within this framework, we reevaluate three existing logical systems, usually characterized by standard Kripke semantics, to account for three forms of ignorance: ignorance whether, ignorance as unknown truth, and disbelieving ignorance. For each form, we present a sound and complete system. To highlight the advantage of this approach, we apply it to address the problem of logical omniscience rephrased in terms of ignorance. The resulting framework considers an agent's capacity to grasp the content of a proposition, bridging the gap between standard relational settings for ignorance representation and natural intuitions about the role of content in forming one's ignorance.

\

\keywords{Hyperintensionality; topic-sensitive semantics; ignorance whether; ignorance as unknown truth; disbelieving ignorance}
\section{Introduction}
\label{Intro}

Hyperintensional contexts, initially introduced by \citet{Cresswell1975} to distinguish between necessarily equivalent contents, have, in recent decades, garnered significant attention in both logical and philosophical investigations. Several notions have been identified as hyperintensional to some extent: information, belief, knowledge, meaning, explanation (see, e.g., \citet{Fine2017}, \citet{Leitgem2019}, \citet{Berto2019}, \citet{BertoH2021}). Furthermore, \citet{Wansing2017} argues that hoping, fearing, supposing, and imagining exhibit hyperintensional features. Building on this, \citet{BertoSEP} advance the thesis that: ``One may be led to the general conclusion that \textit{thought} is hyperintensional.'' A primary objective of the present work is to contribute new evidence to support this thesis by introducing a novel element to the plethora of hyperintensional notions: ignorance.

From an epistemological standpoint, ignorance is usually conceived as a multi-layered notion. In contrast to knowledge and belief, often regarded as paradigmatic cases of hyperintensional notions, ignorance is commonly understood as a notion that manifests in various kinds or forms. For instance, \citet{Peels2020} delineates six forms of (propositional) ignorance: disbelieving ignorance, suspending ignorance, undecided ignorance, unconsidered ignorance, deep ignorance, and complete ignorance.\footnote{We do not have to assume that these forms of ignorance are exhaustive. For our purposes here, it is sufficient to accept that ignorance is not a singular and monolithic notion.} Let us focus our attention on complete ignorance, although unconsidered ignorance and deep ignorance could serve the same purpose.  \citet{Peels2020} characterizes complete ignorance as ``$p$ is such that $S$ cannot grasp or entertain $p$,'' where $S$ is an epistemic subject and $p$ a true proposition. Due to this form of ignorance, it has been argued that ignorance, in its full generality, cannot be viewed as a mental state. This stems from the fact that complete ignorance lacks at least one of the typical features attributed to mental states, namely the aspect of having an attitude towards a proposition (see \citet{Kubyshkina2020}).

Furthermore, there exists a lack of consensus among epistemologists regarding the very definition of ignorance.  The Standard View defines ignorance merely as not knowing (see, e.g., \citet{LeMorvan2012}), while the New View defines it as the absence of true belief (see, e.g., \citet{Peels2012}). A crucial distinction between these positions lies in the acceptance (Standard View) or rejection (New View) of false propositions as genuine instances of ignorance. As a consequence, the New View, in contrast to the Standard View, contends that ignorance must be factive, meaning that if an agent is ignorant of $\phi$, then $\phi$ is true.

Finally, from a logical standpoint, ignorance currently lacks a standard formalization with well-established semantic properties and systems, comparable to the formalizations for knowledge and belief. Over the last few decades, several frameworks formalizing different aspects of ignorance have emerged. Detailed presentations of some of these frameworks will be provided in the next section. For the moment, it is sufficient to note that the multi-layered nature of ignorance and the diverse intuitions surrounding it contribute to a profound disagreement concerning the very foundations of this elusive notion.

The discussion on the epistemological and logical status of ignorance, as described, provides compelling evidence for the necessity of distinguishing this notion from other mental states usually analyzed in the literature on hyperintensional phenomena. Considering the substantial importance and ubiquity of ignorance, evident across scientific instances as well as in moral and social contexts, this constitutes one of the main motivations for  pursueing a dedicated inquiry about the hyperintensionality of this notion. 

The plan is as follows. Section \ref{ReprSec} introduces three formal frameworks for ignorance representation prevalent in the literature. Each of these frameworks introduces ignorance via a primitive modality, allowing one to model three kinds of ignorance: ignorance whether, ignorance as unknown truth, and disbelieving ignorance. In Section \ref{FormalIgnoranceSection}, we explore the rationale behind treating ignorance as a hyperintensional notion and present the semantics employed to model hyperintensional ignorance. The main results are sound and complete axiomatizations for each kind of ignorance. In Section \ref{SecOmniscience},  we tackle the problem of logical omniscience through the lens of ignorance, offering an application of our systems to address this problem. We conclude in Section \ref{Conclusion}.

\section{The formal representation of ignorance}
\label{ReprSec}

Recently, the formal representation of ignorance has attracted significant attention, not only in logic (see \cite{Steinsvold2008}, \cite{Fan2015}) and computer science (see \citet{Hoek2004}), but also in formal epistemology (see \citet{Kubyshkina2021}). In this section we provide an overview of three logics in which ignorance is represented via a primitive modality. Specifically, we will consider \textit{ignorance whether}, \textit{ignorance as unknown truth}, and the formal counterpart of \textit{disbelieving ignorance}. 

In this section, we utilize standard Kripke semantics, without modification to standard presentations. A \textit{frame} is a pair $(W, R)$ where $W$ is a non-empty set of possible worlds, and $R \subseteq W \times W$ is an accessibility relation. A \textit{model} is a pair $(\mathcal{F}, v)$ where $\mathcal{F}$ is frame and $v: \texttt{Prop} \rightarrow \mathcal{P}(W)$ is a valuation function which assigns to each propositional variable in \texttt{Prop} a set of possible worlds.

The first definition we consider is that of \citet{Hoek2004}, who take ignorance to be `not knowing whether' (that is, ignorance whether). The authors consider an extension of classical propositional logic with a primitive operator $I^{w}$ interpreted on Kripke models. The semantics is defined as follows.

\begin{defi}
\label{defSemIwhether}

Let $\mathcal{M} = (W, R, v)$, then

\begin{enumerate}

\item $\mathcal{M}, w \models p$ iff $w \in v(p)$;

\item $\mathcal{M}, w \models \neg \phi$ iff $\mathcal{M}, w \not \models \phi$;

\item $\mathcal{M}, w \models \phi \wedge \psi$ iff $\mathcal{M}, w \models \phi$ and $\mathcal{M}, w \models \psi$;

\item $\mathcal{M}, w \models I^{w} \phi$ iff there exist $w', w'' \in W$ such that $Rww'$, $Rww''$, $\mathcal{M}, w' \models \phi$, and $\mathcal{M}, w'' \models \neg \phi$.

\end{enumerate}

\end{defi}

The propositional operators ($\vee$, $\rightarrow$, $\leftrightarrow$) are defined standardly. Expressions of the form `$I^{w}\phi$' are to be read as `the agent is ignorant whether $\phi$.' 

\citet{Fan2015} show that this semantics characterizes the following system.\footnote{Technically, the authors provide the result for the noncontingency operator $\triangle$. As noted in \citep[p. 1295]{Fan2021}, ``Just as knowledge is the epistemic counterpart of necessity, the forms (iii) [ignorance as unknown truth] and (iv) [ignorance whether] are the epistemic counterparts of accident and contingency.'' In this subsection, we address ignorance whether, and we will discuss ignorance as unknown truth in the subsequent subsection. To uniform the notation, we use the operator $I^{w}$ for ignorance whether and $I^{u}$ for ignorance as unknown truth in our presentation of the systems.} We dub this system \texttt{IW}, which stands for `Ignorance Whether.'

\begin{defi}[System \texttt{IW}]

\

\begin{itemize}

\item[(CPL)] All classical propositional tautologies and Modus Ponens

\item[(A1$_{IW}$)] $(\neg I^{w} (\phi \rightarrow \psi) \wedge \neg I^{w} (\neg \phi \rightarrow \psi)) \rightarrow \neg I^{w} \psi$

\item[(A2$_{IW}$)] $\neg I^{w} \phi \rightarrow (\neg I^{w}(\phi \rightarrow \psi) \vee \neg I^{w}(\neg \phi \rightarrow \chi)$

\item[(A3$_{IW}$)] $\neg I^{w} \phi \leftrightarrow \neg I^{w} \neg \phi$

\item[(NEC$_{IW}$)] From $\vdash \phi$ infer $\vdash \neg I^{w} \phi$

\item[(RE$_{IW}$)] From $\vdash \phi \leftrightarrow \psi$ infer $\vdash \neg I^{w} \phi \leftrightarrow \neg I^{w} \psi$

\end{itemize}

\end{defi}

Notice that a formula $I^{w} \phi$ can be defined as $\neg \Box \phi \wedge \neg \Box \neg \phi$, where $\Box$ is defined in the standard way. However, as shown in  \citep{Fan2015}, the $\Box$ operator cannot be defined using $I^{w}$ on \texttt{K}-frames (i.e., frames where the accessibility relation is not restricted by any property).

\

The second operator for ignorance is introduced by \citet{Steinsvold2008}. In this case, ignorance of $\phi$ is considered as if $\phi$ is true, but unknown to the agent. This kind of ignorance is represented semantically using a primitive modality denoted by $I^{u}$. On Kripke models, the operator is defined as follows.

\begin{defi} Let $\mathcal{M} = (W, R, v)$.

\begin{itemize}

\item $\mathcal{M}, w \models I^{u} \phi$ iff $\mathcal{M}, w \models \phi$ and there exists $w'$ such that $Rww'$ and $\mathcal{M}, w' \models \neg \phi$.

\end{itemize}

\end{defi}

By replacing the clause 4 of Definition \ref{defSemIwhether} with the clause above, we get the semantics which, as shown by \cite{Fan2015a}), characterizes the system \texttt{IU}, standing for `Ignorance as Unknown truth.'

\begin{defi}[System \texttt{IU}]

\

\begin{itemize}

\item[(CPL)] All classical propositional tautologies and Modus Ponens

\item[(US)] Uniform Substitution

\item[(A1$_{IU}$)] $\neg I^{u} \top$

\item[(A2$_{IU}$)] $\neg p \rightarrow \neg I^{u} p$

\item[(A3$_{IU}$)] $(\neg I^{u} p \wedge \neg I^{u} q) \rightarrow \neg I^{u}(p \wedge q)$

\item[(R$_{IU}$)] From $\vdash \phi \rightarrow \psi$ infer $\vdash (\neg I^{u} \phi \wedge \phi) \rightarrow \neg I^{u} \psi$

\end{itemize}

\end{defi}

Similarly to $I^{w}$, formula $I^{u} \phi$ can be defined as $\phi \wedge \neg \Box \phi$, but $\Box$ cannot be defined using the $\Box$ operator on \texttt{K}-frames (see \citet{Gilbert2016}).

\

A third option for ignorance representation is provided by \citet{Kubyshkina2021}. We term this type of ignorance \textit{disbelieving ignorance}, and denote $I^{d}$ the primitive modality associated with it.\footnote{\citet{Kubyshkina2021} refer to the type of ignorance represented by $I^{d}$ as `factive ignorance.' However, $I^{d}$ is not the only factive operator for ignorance; for example, $I^{u}$ is also factive. Therefore, we identify $I^{d}$ with a specific type of ignorance--disbelieving ignorance--by borrowing the terminology from \citep{Peels2020}.} An agent is disbelievingly ignorant of $\phi$ if $\phi$ is true, but the agent believes $\neg \phi$. For example, an agent believes that her brother is in Rome, but he is not. Thus, she is dibelievingly ignorant that her brother is not in Rome. The definition of $I^{d}$ on standard Kripke models follows.

\begin{defi}[Operator $I^{d}$]

Let $\mathcal{M} = (W, R, v)$.

\begin{itemize}

\item  $\mathcal{M}, w \models I^{d} \phi$ iff $\mathcal{M}, w \models \phi$ and for all $w'$ which are not $w$, if $Rww'$ then $\mathcal{M}, w'  \models \neg \phi$.

\end{itemize}

\end{defi}

The semantic clause for $I^{d} \phi$ to be true in a world $w$ states that $\phi$ is true in $w$, but considered false in all worlds accessible from $w$ (which are not $w$ itself). This characterization justifies the name `disbelieving ignorance': for everything that the agent considers to be true, $\phi$ is false.\footnote{\citet{Steinsvold2011} introduced an operator for ``being wrong,'' denoted by $W$, which can also be interpreted as an operator for disbelieving ignorance. In \citet{Gilbert2021}, it was shown that $W$ and $I^{d}$ give rise to the same set of validities. Consequently, the results presented in this article can also be extended to the system containing the $W$ operator.}

By replacing the clause 4 of Definition \ref{defSemIwhether} with the clause above, we get the semantics which, as shown by \citet{Gilbert2021}, characterizes the system \texttt{DI}, standing for `Disbelieving Ignorance.'

\begin{defi}[System \texttt{DI}]

\

\begin{itemize}

\item[(CPL)] All classical propositional tautologies and Modus Ponens

\item[(US)] Uniform Substitution

\item[(A1$_{DI}$)] $I^{d} p \rightarrow p$

\item[(A2$_{DI}$)] $(I^{d} p \wedge I^{d} q) \rightarrow I^{d}(p \vee q)$

\item[(IR$_{DI}$)] From $\vdash \phi \rightarrow \psi$ infer $\vdash \phi \rightarrow (I^{d} \psi \rightarrow I^{d} \phi)$ 

\end{itemize}

\end{defi}

As shown by \citet{Gilbert2021}, $I^{d}$ and $\Box$ are not interdefinable on \texttt{K}-frames (as well as on any reflexive frame). 

\

Evidently, none of the systems \texttt{IW}, \texttt{IU}, \texttt{DI} treats ignorance as a hyperintensional notion. In each case, if $\phi \leftrightarrow \psi$, then $I \phi \leftrightarrow I \psi$, where $I$ stands for $I^{w}$, $I^{u}$, or $I^{d}$.  

\section{The hyperintensionality of ignorance}
\label{FormalIgnoranceSection}

What justifies viewing ignorance as a hyperintensional notion to begin with? The answer to this question depends on the perspective one adopts regarding the nature of ignorance. In Section \ref{Intro}, we characterized the Standard View as identifying ignorance with not knowing. If one accepts this view, then the hyperintensionality of ignorance follows from the hyperintensionality of knowledge. Given any hyperintensional case of knowledge -- where one knows $\phi$ but not $\psi$, despite $\phi$ and $\psi$ being logically equivalent -- one is ignorant of $\psi$ but not $\phi$. However, this becomes less clear if one does not accept the Standard View. The argument hinges on the complementarity between knowledge and ignorance, which can be questioned. Consider a situation where an agent does not know that $1 + 1 = 3$ because she knows that $1 + 1 \not = 3$. If one assumes the Standard View, then one must conclude that the agent is ignorant of $1 + 1 = 3$. However, several authors argue that this is not a case of ignorance (see, e.g., \citet{vanWoudenberg2009}, \citet{Zimmerman2018}, \citet{Pritchard2021}, \citet{Peels2023}). The reason may lie either in the assumption that ignorance is factive, or in the idea that knowledge of $\phi$ implies non-ignorance of $\neg \phi$. In both cases, the example highlights that even though ignorance implies non-knowledge, it is not always the case that non-knowledge implies ignorance. Therefore, if the complementarity between knowledge and ignorance is rejected, it is no longer guaranteed that hyperintensionality of ignorance follows from the hyperintensionality of non-knowledge. 

Moreover, from a logical perspective, the three systems presented in Section \ref{ReprSec} do not consider ignorance merely as not knowing, but rather identify more restricted forms of ignorance by specifying the reasons why an agent is ignorant. Thus, none of the operators $I^{w}$, $I^{u}$, or $I^{d}$ aligns perfectly with the Standard View: $I^{w}$ and $I^{u}$ are defined as conjunctions of not knowing with additional conditions, while $I^{d}$ is not definable in terms of knowledge. This does not demonstrate an absolute incompatibility between the systems \texttt{IW}, \texttt{IU}, \texttt{DI} and the Standard View, but it would be incorrect to assume that the Standard View is necessarily the underlying framework for the three systems. 

Finally, none of the systems \texttt{IW}, \texttt{IU}, or \texttt{DI} allows for the representation of  knowledge understood as the epistemic counterpart of the $\Box$ operator. Therfore, since knowledge is inexpressible in these settings, its hyperintensional nature cannot be straightforwardly applied to ignorance. 

To justify that ignorance is a hyperintensional notion, independently of the Standard View, let us borrow and adapt a famous example from \citet[p. 88]{Stalnaker1984}.\footnote{We thank an anonymous reviewer for bringing Stalnaker's example to our attention.} William III of England was not ignorant that England could avoid a war with France, but he was ignorant of the concept of nuclear war. Therefore, it would be odd to claim that he was not ignorant of whether England could avoid war with France using nuclear weapons. Let $p$ stand for ``England can avoid war with France'' and $q$ stand for ``England can avoid nuclear war with France.''  Thus, if William III is ignorant whether England could avoid war with France using nuclear  weapons ($q \vee \neg q$),\footnote{In this case, William III is ignorant of a tautology. This does not imply that he is unable to determine whether $q \vee \neg q$ holds when presented with the formula, but rather that he is not in a position to initiate the deductive inquiry leading to this conclusion. According to Stalnaker's analysis, William III lacks the information necessary to reason about nuclear wars.} he is also ignorant of $p \wedge (q \vee \neg q)$. However, this means that he should also be ignorant of whether England could avoid a war with France, because $p$ is logically equivalent to $p \wedge (q \vee \neg q)$. Therefore, we have an example where an agent is ignorant of one proposition but not another, despite the fact that they have the same intension. To conclude, we have shown that regardless of whether one accepts or rejects the Standard View, the conclusion holds true: ignorance should be understood as a hyperintensional notion.

The following subsections present three hyperintesional systems for representing ignorance whether, ignorance as unknown truths, and disbelieving ignorance. In particular, we redefine the operators $I^{w}$, $I^{u}$, and $I^{d}$ by adding a supplementary condition on grasping or not the content of the proposition. This permits us to distinguish between situations in which an agent is ignorant of $\phi$ and not ignorant of $\psi$, even though $\phi$ and $\psi$ are equivalent, on the basis of what the agent grasps.

\subsection{A logic for Hyperintensional Ignorance Whether}
\label{HIWsubsection}

Let us start by examining $I^{w}$. As it was pointed out previously, this kind of ignorance manifests when an agent possesses no knowledge regarding the veracity of a proposition or its negation. The primary objective of this subsection is to refine this definition by incorporating the relevance of the topic of a proposition about which an agent may be ignorant. In particular, our objective is to elucidate the additional condition for ignorance: an agent may be ignorant whether because she is not grasping the topic of a given proposition. 

We conceive topics in the same way as \cite{Ozgun2020}, who, in turn, follow Yablo's idea of identifying topics with what `meaningful items bear to whatever it is that they are \textit{on} or \textit{of} or that they \textit{address} or \textit{concern}'' (\cite{Yablo2014}, p. 1). A topic of a proposition is what the proposition is about. Therefore, the semantics to model this idea must account for two components: the intension of a proposition $\phi$ (i.e., the set of possible worlds), and its topic.\footnote{Topics have been treated differently by various authors. For more details, see \citep{Ozgun2020} and the references therein.}

Formally, we proceed by defining recursively the language $\mathcal{L}_{I^{w}}$ as follows:

$$ \phi : = p \mid \neg \phi \mid \phi \wedge \phi \mid I^{w} \phi \mid \Box \phi$$

The propositional operators ($\vee$, $\rightarrow$, $\leftrightarrow$) are defined standardly. Expressions of the form `$I^{w}\phi$' are to be read as `the agent is ignorant whether $\phi$.' In lign with the framework provided by \citet{Ozgun2023}, we incorporate the $\Box$ operator into the language for technical reasons. Conceptually, it can be understood as an \textit{a priori} modality, signifying analytic truths. It is noteworthy that both modalities, $I^{w}$ and $\Box$, are primitive in this language.

In order to provide the semantics for the language $\mathcal{L}_{I^{w}}$, we use topic and topic-sensitive models defined in \citep[Definitions 1,2]{Ozgun2023}.

\begin{defi}[Topic model]
\label{topicmodel}

A topic is a tuple $\mathcal{T} = (T, \oplus, t, \mathfrak{K})$ where

\begin{itemize}

\item $T$ is a non-empty set of possible topics;

\item $\oplus: T \times T \mapsto T$ is an idempotent, commutative, associative topic-fusion operator;

\item $\mathfrak{K} \in T$ is a designated topic representing the totality of topics grasped by the agent; and

\item $t: \mathtt{Prop} \mapsto T$ is a topic function assigning a topic to each element in $\mathtt{Prop}$.

\end{itemize}

\end{defi}

\begin{defi}[Topic-sensitive model]

A topic-sensitive model is a tuple $\mathcal{M} = (W, R, v, \mathcal{T})$ where $W$ is a non-empty set of possible worlds, $R \subseteq W \times W$ is a binary accessibility relation between worlds, $v: \mathtt{Prop} \mapsto \mathcal{P}(W)$ is a standard valuation function that assigns to each propositional variable a set of possible worlds, and $\mathcal{T}$ is a topic model as given in Def. \ref{topicmodel}.

\end{defi}

The function $t$ extends to the language $\mathcal{L}_{I^{w}}$ by taking the topic of $\phi$ as the fusion of the topics of the elements in $Var(\phi): t(\phi) = \oplus \{t(p): p \in Var(\phi)\}$, where $Var(\phi)$ denotes the set of propositional variables occurring in $\phi$. This entails topic-transparency of operators: $t(\phi) = t(I^{w}\phi) = t(\Box \phi) = t(\neg \phi)$ and $t(\phi \wedge \psi) = t(\phi) \oplus t(\psi)$. Topic parthood $\sqsubseteq$ is defined as follows: for all $a, b \in T$, $a \sqsubseteq b$ iff $a \oplus b = b$. It follows that $(T, \oplus)$ is a join semilattice and $(T, \sqsubseteq)$ is a partially ordered set.

\begin{defi}[Semantics for $\mathcal{L}_{I^{w}}$]
\label{semHIW}

Let $\mathcal{M} = (W, R, v, \mathcal{T})$, then

\begin{enumerate}

\item $\mathcal{M}, w \models p$ iff $w \in v(p)$;

\item $\mathcal{M}, w \models \neg \phi$ iff $\mathcal{M}, w \not \models \phi$;

\item $\mathcal{M}, w \models \phi \wedge \psi$ iff $\mathcal{M}, w \models \phi$ and $\mathcal{M}, w \models \psi$;

\item $\mathcal{M}, w \models \Box \phi$ iff for all $w' \in W$, $\mathcal{M}, w' \models \phi$;

\item $\mathcal{M}, w \models I^{w} \phi$ iff $t(\phi) \not \sqsubseteq \mathfrak{K}$ or (there exist $w', w'' \in W$ such that $Rww'$, $Rww''$, $\mathcal{M}, w' \models \phi$, and $\mathcal{M}, w'' \models \neg \phi$).

\end{enumerate}

A formula $\phi$ is valid  in $\mathcal{M}$ ($\mathcal{M} \models \phi$), if $\mathcal{M}, w \models \phi$ for all $w \in W$. We call a formula $\phi$ valid in a class of topic-sensitive models $\mathfrak{C}$ if $\mathcal{M} \models \phi$ for all $\mathcal{M} \in \mathfrak{C}$.

\end{defi}

As it is clear from the case 4 of Definition \ref{semHIW}, $\Box$ is the global modality which incapsulates $S5$ properties, while the interpretation of $I^{w}$ does not require any supplementary property of the accessibility relation. 

Let $Var(\phi)$ denote the set of propositional variables occurring in $\phi$, $\bar{\phi} : = \bigwedge_{x \in Var(\phi)} (x \vee \neg x)$. Similarly to \citep{Berto2021} and \citep{Ozgun2023}, we need this kind of expressions for simplifying the axiomatization of the system. Given a model $\mathcal{M} = (W, R, v, \mathcal{T})$, for $\neg I^{w} \phi$ to be true at $w$ we should have $t(\phi) \sqsubseteq \mathfrak{K}$ and for all worlds $w' \in W$ s.t. $Rww'$ and either $\phi$ is true in all $w'$, or $\neg \phi$ is true in all $w'$. Since $\bar{\phi}$ is true everywhere, $Var(\bar{\phi}) = Var(\phi)$ for any $\phi \in \mathcal{L}_{I^{w}}$, and $\mathcal{M}, w \models \neg \bar{\phi}$ is impossible for any $w \in W$ for any $\mathcal{M}$, formulas of the form $\neg I^{w} \bar{\phi}$ ($I^{w} \bar{\phi}$) express statements such as `an agent has (not) grasped the topic of $\phi$,' respectively. The proof can be summarized as follows:

\begin{center}

$\mathcal{M}, w \models \neg I^{w} \bar{\phi}$ iff $t(\phi) \sqsubseteq \mathfrak{K}$ and 

($\mathcal{M}, w' \models \bar{\phi}$ for all $w'$ s.t. $Rww'$ or $\mathcal{M}, w' \models \neg \bar{\phi}$ for all $w'$ s.t. $Rww'$) iff

$t(\phi) \sqsubseteq \mathfrak{K}$ and $\mathcal{M}, w' \models \bar{\phi}$ for all $w'$ s.t. $Rww'$, iff

$t(\phi) \sqsubseteq \mathfrak{K}$.

\end{center}

The reading of $\neg I^{w} \bar{\phi}$ and of $I^{w} \bar{\phi}$ in terms of grasping helps in understanding the axioms of the system $\mathtt{HIW}$, which stands for `Hyperintensional Ignorance Whether.'

\begin{defi}[System $\mathtt{HIW}$]
\label{HIW}

\

\begin{itemize}

\item[(CPL)] All classical propositional tautologies and Modus Ponens

\item[(S5$_{\Box}$)] All S5 axioms and rules for $\Box$

\item[($I^{w} \leftrightarrow$)] $I^{w} \phi \leftrightarrow I^{w} \neg \phi$

\item[(A1$_{I^{w}}$)] $(\neg I^{w}\phi \wedge \neg I^{w} \bar{\psi} \wedge \neg I^{w} \bar{\chi}) \rightarrow (\neg I^{w}(\phi \rightarrow \psi) \vee \neg I^{w}(\neg \phi \rightarrow \chi))$

\item[(A2$_{I^{w}}$)] $\neg I^{w} \bar{\phi} \rightarrow \neg I^{w}( \phi \vee \neg \phi)$

\item[(A3$_{I^{w}}$)] $(\neg I^{w} \phi \wedge \neg I^{w} \psi) \rightarrow \neg I^{w} (\phi \wedge \psi)$

\item[(A4$_{I^{w}}$)] $\neg I^{w} \bar{\phi} \rightarrow \Box \neg I^{w} \bar{\phi}$

\item[(A5$_{I^{w}}$)] $I^{w} \bar{\phi} \rightarrow I^{w} \phi$

\item[(A6$_{I^{w}}$)] $(\neg I^{w} (\phi \rightarrow \neg \psi) \wedge \neg I^{w} \phi \wedge \neg I^{w} (\psi \rightarrow \phi)) \rightarrow \neg I^{w} \psi$

\item[(A7$_{I^{w}}$)] $\neg I^{w} \bar{\phi} \rightarrow \neg I^{w} \bar{\psi}$, if $Var(\psi) \subseteq Var(\phi)$

\item[(NEC$_{I^{w}}$)] From $\vdash \phi$ infer $\vdash \neg I^{w} \bar{\phi} \rightarrow \neg I^{w} \phi$

\item[(RE$_{I^{w}}$)] From $\vdash \phi \leftrightarrow \psi$ infer $\vdash (\neg I^{w}\bar{\phi} \wedge \neg I^{w} \bar{\psi}) \rightarrow (\neg I^{w} \phi \leftrightarrow \neg I^{w} \psi)$

\end{itemize}

\end{defi}

The axiom ($I^{w} \leftrightarrow$) is a standard axiom for $I^{w}$ (see axiom (A3$_{IW}$) of the system \texttt{IW}), claiming that being ignorant whether $\phi$ is equivalent to being ignorant whether $\neg \phi$. The axiom (A1$_{I^{w}}$) is a refinement of the axiom (A2$_{IW}$)  of the system \texttt{IW}, claiming that if an agent is not ignorant whether $\phi$, then she is not ignorant whether either of the consequences of $\phi$, or of $\neg \phi$, given that the topics of these ocnsequences are grasped. The axiom (A2$_{I^{w}}$) means that if an agent grasps the topic of $\phi$, then she is not ignorant whether the excluded middle for $\phi$ holds. The axiom (A3$_{I^{w}}$) means that non-ignorance of $\phi$ and $\psi$ implies non-ignorance of their conjunction. The axiom (A4$_{I^{w}}$) means that grasping the topic of $\phi$ implies that the agent grasps it in all epistemic states. The axiom (A5$_{I^{w}}$) means that if an agent does not grasp the topic of a proposition, then she is ignorant of this proposition. The axiom (A6$_{I^{w}}$) means that if an agent is not ignorant of a proposition $\phi$ and not ignorant on the relation between $\phi$ and some proposition $\psi$, then the agent is not ignorant whether $\psi$ holds. The axiom (A7$_{I^{w}}$) means that if an agent grasps the content of $\psi$, then she grasps the content of all its subformulas. The rules (NEC$_{I^{w}}$)  and (RE$_{I^{w}}$) are refinements of standard rules NEC and RE for $\neg I^{w}$ interpreted on topic-sensitive semantics. The (NEC$_{I^{w}}$) claims that whenever there is a theorem, the agent is in not ignorant whether it is true only if she grasps the topic of this theorem. The (RE$_{I^{w}}$) rule claims that whenever there are two  equivalent propositions, the agent is in the same epistemic state with respect to both propositions, only if she grasps the topics of these propositions. As will become clear in Section \ref{SecOmniscience}, these refinements explicitly highlight an important feature of hyperintensional ignorance, which proves essential in addressing the problem of logical omniscience.

\begin{theor}
\label{theorHIWsound}

The system $\mathtt{HIW}$ is sound.

\end{theor}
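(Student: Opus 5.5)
The proof will be by induction on the length of derivations in $\mathtt{HIW}$: every axiom is valid in the class of all topic-sensitive models, and the rules preserve validity. (CPL) and Modus Ponens are immediate, and (S5$_{\Box}$) is valid because, by clause 4 of Definition \ref{semHIW}, $\Box$ is the global modality. Throughout I will use two facts about a fixed model $\mathcal{M}=(W,R,v,\mathcal{T})$ and world $w$.

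\emph{Fact 1.} $\mathcal{M},w\models \neg I^{w}\phi$ iff $t(\phi)\sqsubseteq\mathfrak{K}$ and $\phi$ has a uniform truth value on $R[w]=\{w' : Rww'\}$. Here ``uniform'' means that either $\phi$ holds at all of $R[w]$ or $\neg\phi$ holds at all of $R[w]$; this is vacuously so when $R[w]=\emptyset$.

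\emph{Fact 2.} $\mathcal{M},w\models\neg I^{w}\bar\phi$ iff $t(\phi)\sqsubseteq\mathfrak{K}$. This is the computation displayed before Definition \ref{HIW}, using $Var(\bar\phi)=Var(\phi)$ and hence $t(\bar\phi)=t(\phi)$.

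I will also use that $\sqsubseteq$ is a partial order with $\oplus$ as join, so $t(\phi\wedge\psi)=t(\phi)\oplus t(\psi)\sqsubseteq\mathfrak{K}$ iff both $t(\phi)\sqsubseteq\mathfrak{K}$ and $t(\psi)\sqsubseteq\mathfrak{K}$. Finally, the topic of any Boolean or modal compound is the join of the topics of its variables.

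The easy axioms follow directly from these facts. ($I^{w}\leftrightarrow$) holds because $t(\neg\phi)=t(\phi)$ and uniformity is symmetric under negation. (A2$_{I^{w}}$) holds since $\phi\vee\neg\phi$ has topic $t(\phi)$ and is true everywhere. (A3$_{I^{w}}$) holds because the join of two grasped topics is grasped, and if $\phi$ and $\psi$ are each uniform on $R[w]$ then so is $\phi\wedge\psi$. (A4$_{I^{w}}$) holds because, by Fact 2, the right-hand condition $t(\phi)\sqsubseteq\mathfrak{K}$ does not depend on the world. (A5$_{I^{w}}$) holds because by Fact 2 its antecedent says $t(\phi)\not\sqsubseteq\mathfrak{K}$, which is the first disjunct of clause 5. (A7$_{I^{w}}$) holds because $Var(\psi)\subseteq Var(\phi)$ gives $t(\psi)\sqsubseteq t(\phi)$, and transitivity finishes.

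For the rules: if $\phi$ is valid it is true throughout $R[w]$, so under the hypothesis $t(\phi)\sqsubseteq\mathfrak{K}$ Fact 1 yields $\neg I^{w}\phi$; this gives (NEC$_{I^{w}}$). For (RE$_{I^{w}}$), validity of $\phi\leftrightarrow\psi$ means $\phi$ and $\psi$ agree at every world, so they are uniform on $R[w]$ together. With both topics grasped, Fact 1 then gives $\neg I^{w}\phi\leftrightarrow\neg I^{w}\psi$.

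The main obstacle is checking (A1$_{I^{w}}$) and (A6$_{I^{w}}$), which need case splits on the uniform value of $\phi$ on $R[w]$. I treat $R[w]=\emptyset$ as a degenerate case in which every formula is uniform, so only the topic conditions matter.

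For (A1$_{I^{w}}$): the hypotheses give $t(\phi)$, $t(\psi)$ and $t(\chi)$ all below $\mathfrak{K}$, so both $\phi\to\psi$ and $\neg\phi\to\chi$ have grasped topics. If $\phi$ holds throughout $R[w]$, then $\neg\phi\to\chi$ is true throughout $R[w]$, hence uniform, and the second disjunct holds. If $\neg\phi$ holds throughout $R[w]$, then $\phi\to\psi$ is true throughout $R[w]$, and the first disjunct holds.

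For (A6$_{I^{w}}$): first, $t(\psi)\sqsubseteq t(\phi\to\neg\psi)\sqsubseteq\mathfrak{K}$, so the topic of $\psi$ is grasped. If $\phi$ holds throughout $R[w]$, then $\phi\to\neg\psi$ agrees with $\neg\psi$ on $R[w]$. Its uniformity therefore makes $\psi$ uniform. If $\neg\phi$ holds throughout $R[w]$, then $\psi\to\phi$ agrees with $\neg\psi$ on $R[w]$, and its uniformity again makes $\psi$ uniform. In either case Fact 1 gives $\neg I^{w}\psi$. Combining all cases yields Theorem \ref{theorHIWsound}.
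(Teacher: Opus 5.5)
Your proof is correct and follows essentially the same route as the paper: a direct semantic check that each axiom is valid and each rule preserves validity over all topic-sensitive models. The differences are only presentational. You isolate the characterizations of $\neg I^{w}\phi$ and $\neg I^{w}\bar{\phi}$ at the start, you handle (A6$_{I^{w}}$) by a direct case split on the uniform value of $\phi$ rather than the paper's reductio, and you handle (A4$_{I^{w}}$) via the world-independence of $t(\phi)\sqsubseteq\mathfrak{K}$.
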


\begin{theor}
\label{theorHIWcomplete}

The system $\mathtt{HIW}$ is complete.

\end{theor}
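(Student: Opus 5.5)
We prove completeness by a canonical model construction in the style of \citet{Ozgun2023}, adapted to the non-contingency-style operator of \citet{Fan2015}. We show that every $\mathtt{HIW}$-consistent set $\Gamma_0$ is satisfiable in a topic-sensitive model.

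\textbf{Worlds and the topic model.} Extend $\Gamma_0$ by Lindenbaum's lemma to a maximal consistent set $\Gamma$. Take $W$ to be the set of maximal consistent sets $\Delta$ that agree with $\Gamma$ on all $\Box$-formulas, i.e.\ $\Box\phi\in\Gamma$ iff $\Box\phi\in\Delta$. By (S5$_{\Box}$) the global modality then behaves correctly: the truth lemma case for $\Box$ is the standard S5 argument restricted to this cluster.

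By (A4$_{I^{w}}$), $\neg I^{w}\bar{\phi}\in\Gamma$ implies $\Box\neg I^{w}\bar\phi\in\Gamma$. Using the contrapositive together with S5 (so that $I^{w}\bar\phi\in\Gamma$ yields $\Box I^{w}\bar\phi$ as well), all worlds of $W$ agree on which $\bar\phi$ are grasped.

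For the topics, let $T$ be the finite subsets of $\mathtt{Prop}$, with $\oplus=\cup$ and $t(p)=\{p\}$, so that $t(\phi)=Var(\phi)$. Let $\mathfrak{K}=\{p : \neg I^{w}\bar p\in\Gamma\}$. Strictly, $\mathfrak{K}$ must be an element of $T$, so we allow $T$ to contain arbitrary subsets; this is harmless, since only parthood matters. We then need
\[
t(\phi)\sqsubseteq\mathfrak{K}\quad\text{iff}\quad \neg I^{w}\bar\phi\in\Gamma .
\]
Left to right: $\bar\phi$ is a conjunction of the $\bar p$, and we use (A2$_{I^{w}}$), (A3$_{I^{w}}$) and (RE$_{I^{w}}$) to assemble $\neg I^{w}$ of the conjunction from the grasped conjuncts. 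Right to left follows from (A7$_{I^{w}}$).

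\textbf{Accessibility relation.} Following \citet{Fan2015}, put $R\Delta\Delta'$ iff for every $\phi$ with $\neg I^{w}\phi\in\Delta$ and $\neg I^{w}\bar\phi\in\Delta$, we have $\phi\in\Delta'$ whenever $\phi\in\Delta$. Here ``known whether'' formulas are stored with their actual truth value, and grasping is required. If this gives trouble, we use Fan's variant: $\phi\in\Delta'$ for all $\phi$ such that $\neg I^{w}(\psi\vee\phi)\in\Delta$ for every $\psi$.

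\textbf{Truth lemma.} The key case is $I^{w}\phi$. If $t(\phi)\not\sqsubseteq\mathfrak{K}$, then $I^{w}\bar\phi\in\Delta$, so $I^{w}\phi\in\Delta$ by (A5$_{I^{w}}$), matching the semantics. If $\phi$'s topic is grasped, we must show that $\neg I^{w}\phi\in\Delta$ iff all $R$-successors agree on $\phi$.

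The easy direction is immediate from the definition of $R$ together with ($I^{w}\leftrightarrow$): whichever of $\phi,\neg\phi$ holds in $\Delta$ propagates to all successors.

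The hard direction is the existence lemma: if $I^{w}\phi\in\Delta$ and the topic is grasped, we must build successors $\Delta'\ni\phi$ and $\Delta''\ni\neg\phi$. We show that
\[
\{\chi : \neg I^{w}\chi,\ \neg I^{w}\bar\chi,\ \chi\in\Delta\}\cup\{\phi\}
\]
is consistent, and similarly with $\neg\phi$. Suppose not. Then, using (A3$_{I^{w}}$), some grasped, known-whether $\chi\in\Delta$ proves $\neg\phi$. Now apply (NEC$_{I^{w}}$), (RE$_{I^{w}}$), (A6$_{I^{w}}$) and (A1$_{I^{w}}$), with the side conditions discharged by (A7$_{I^{w}}$) and the fact that $Var(\chi\to\neg\phi)$ is grasped. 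This transfers non-ignorance from $\chi$ to $\phi$ and contradicts $I^{w}\phi\in\Delta$.

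\textbf{Main obstacle.} The hard step is this existence lemma. The delicate point is that (NEC$_{I^{w}}$) and (RE$_{I^{w}}$) are conditional on grasping. Every auxiliary formula used in the derivation, such as $\chi\to\neg\phi$ and $\neg\chi\to\psi$, must therefore have its variables inside $Var(\chi)\cup Var(\phi)$, which is grasped. We need to choose the witnesses in (A1$_{I^{w}}$) accordingly, for instance taking $\psi$ and $\chi$ to be built from the same variables. We also need to track carefully that (A6$_{I^{w}}$) supplies exactly the transfer step that plain RE would give in the intensional system.

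Once the truth lemma holds, $\Gamma_0$ is satisfied at $\Gamma$ in this model, so every consistent set is satisfiable and $\mathtt{HIW}$ is complete.
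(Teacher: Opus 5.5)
Your topic model and the grasping characterization are fine; the paper uses a two-element topic space instead. There is, however, a genuine gap at exactly the step you flag as the main obstacle: your primary accessibility relation is the wrong one.

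\textbf{The relation.} Requiring every $\phi$ with $\neg I^{w}\phi\in\Delta$ and $\phi\in\Delta$ to lie in $\Delta'$ makes known-whether \emph{truths} at $\Delta$ hold at all successors. This is a reflexivity-style assumption, and it fails on arbitrary frames. Take $W=\{w,u_1,u_2\}$, $R=\{(w,u_1),(w,u_2)\}$, $v(p)=\{w\}$, $v(q)=\{u_1\}$, with all topics grasped. At $w$ we have $p$, $\neg I^{w}p$ and $I^{w}(\neg p\wedge q)$, so by soundness this set is consistent. For a maximal consistent $\Delta$ containing it, $p$ lies in your set $\{\chi : \neg I^{w}\chi,\ \neg I^{w}\bar\chi,\ \chi\in\Delta\}$. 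Hence no successor contains $\neg p\wedge q$, your consistency claim is false, and the truth lemma fails for $I^{w}(\neg p\wedge q)$.

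The \emph{transfer} step cannot be carried out. With $\vdash\chi\to\neg\phi$, (A6$_{I^{w}}$) also needs the premise $\neg I^{w}(\phi\to\chi)$, and nothing in your set supplies it. That missing premise is why the paper uses the Fan--Wang--van Ditmarsch witness relation: $\Gamma R^{C}\Delta$ iff, for some $\psi$ with $\neg I^{w}\bar\psi\wedge I^{w}\psi\in\Gamma$, every $\phi$ with $\neg I^{w}\phi\wedge\neg I^{w}(\psi\to\phi)\in\Gamma$ lies in $\Delta$. In the existence lemma the witness is $\psi$ itself (resp.\ $\neg\psi$), and the conjuncts $\neg I^{w}(\psi\to\chi_j)$ supply the third premise of Proposition~\ref{genA6IW}. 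The converse direction uses (A1$_{I^{w}}$) together with (RE$_{I^{w}}$).

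Your fallback does not help as written. With $\psi$ unrestricted, (A7$_{I^{w}}$) and (A5$_{I^{w}}$) give $I^{w}(q\vee\phi)\in\Delta$ for any ungrasped atom $q$. So the defining set is empty, $R$ is universal on $W$, and even your easy direction fails. You would have to restrict $\psi$ to grasped formulas.

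\textbf{Staying in the cluster.} A second gap is shared with the paper's own proof. $W$ is a single $\Box$-cluster, but you never show that the Lindenbaum extensions used as successors lie in it. That would require $\Gamma[\Box]$ to be consistent with the successor set plus $\phi$, and this cannot be shown in $\mathtt{HIW}$. The only axiom mixing $\Box$ and $I^{w}$ is (A4$_{I^{w}}$). Since grasping is world-independent, all axioms and rules of $\mathtt{HIW}$ remain sound when $\Box$ ranges over the cell of an arbitrary equivalence relation $E$ rather than over all of $W$.

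Now take $E$ the identity, $R=\{(w,w),(w,u)\}$, $v(p)=\{w\}$, with $p$ grasped. Then $(\Box p\wedge\neg I^{w}\bar p)\to\neg I^{w}p$ fails at $w$. Yet this formula is valid in the paper's semantics, where $\Box$ is global. So the completeness statement does not hold for $\mathtt{HIW}$ as axiomatized, and no choice of $R$ closes this step. Adding a sound interaction axiom such as $(\Box\phi\wedge\neg I^{w}\bar\phi)\to\neg I^{w}\phi$ would let you absorb the $\Box$-formulas in the consistency argument, using the paper's witness relation.
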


The proof is in Appendices \ref{appHIWsound} and \ref{appHIWcomplete}.

\

Let us now reconsider the example of William III that we have provided at the beginning of this section in terms of hyperintensional ignorance whether. We need to construct a model that distinguishes between the following 2 propositions:

(1$^{HIW}$) Willian III is ignorant whether $p$.

(2$^{HIW}$) William III is ignorant whether $p \wedge (q \vee \neg q)$.

Let $\mathcal{M} = (W, R, v, \mathcal{T})$, where $W = \{w, w'\}$, $R = \{(w,w), (w,w')\}$, $v(p) = v(q) = \{w, w'\}$, and $\mathcal{T} = (T, \oplus, t, \mathfrak{K})$, where $T =\{a, b\}$, $b \sqsubset a$, $t(p) = b$, $t(q) = a$, and $b = \mathfrak{K}$. In this model, we have $\mathcal{M} \models p \leftrightarrow (p \wedge (q \vee \neg q))$. We have that $\mathcal{M}, w \models \neg I^{w} p$, because for all $w'$ such that if $Rww'$ then $\mathcal{M}, w' \models p$ and $t(p) \sqsubseteq \mathfrak{K}$, and $\mathcal{M}, w \models I^{w} (p \wedge (q \vee \neg q))$, because $\mathfrak{K} \sqsubset t(q) = a$, that is $t(q) \not \sqsubseteq \mathfrak{K}$. Hence, despite the equivalence of $p$ and $p \wedge (q \vee \neg q)$, while William III is not ignorant whether $p$, he is ignorant whether $p \wedge (q \vee \neg q)$ because he does not grasp the topic of $q$.

\subsection{A logic for Hyperintensional Ignorance as Unknown truths}

Let us now consider $I^{u}$. Similarly to the case of $I^{w}$, our objective is to supplement the definition of this operator with topicality. In what follows, we will consider one possible refinement: we consider that an agent is ignorant of a proposition either if the proposition is true, but unknown by the agent, or because the agent does not grasp the content of this proposition. 

The language $\mathcal{L}_{I^{u}}$ is defined recursively as follows:

$$ \phi : = p \mid \neg \phi \mid \phi \wedge \phi \mid I^{u} \phi \mid \Box \phi$$

Other propositional operators are defined standardly. Expressions of the form `$I^{u}\phi$' should be read as `the agent is ignorant that $\phi$.'

The semantics is defined on the topic-sensitive models. We simply replace the case of $I^{w}$ by the following definition for $I^{u}$.

\begin{defi}[Semantics for $\mathcal{L}_{I^{u}}$]
\label{HIUdefi}

Let $\mathcal{M} = (W, R, v, \mathcal{T})$, then

\

\begin{itemize}

\item $\mathcal{M}, w \models I^{u} \phi$ iff $t(\phi) \not \sqsubseteq \mathfrak{K}$ or ($\mathcal{M}, w \models \phi$ and there exist $w' \in W$ such that $Rww'$ and $\mathcal{M}, w' \models \neg \phi$.

\end{itemize}

\end{defi}

Similarly to $I^{w}$, the expressions of the form $\neg I^{u} \bar{\phi}$ ($I^{u} \bar{\phi}$) should be read as `an agent has (not) grasped the topic of $\phi$':

\begin{center}

$\mathcal{M}, w \models \neg I^{u} \bar{\phi}$ iff $t(\phi) \sqsubseteq \mathfrak{K}$ and

$(\mathcal{M}, w \not \models \bar{\phi}$ or $\mathcal{M}, w' \models \bar{\phi}$ for all $w'$, s.t. $Rww'$) iff

$t(\phi) \sqsubseteq \mathfrak{K}$ and $\mathcal{M}, w' \models \bar{\phi}$ for all $w'$, s.t. $Rww'$, iff

$t(\phi) \sqsubseteq \mathfrak{K}$.

\end{center}

We now define the system $\mathtt{HIU}$, which stands for `Hyperintensional Ignorance as Unknown truths.'

\begin{defi}[System $\mathtt{HIU}$]

\

\begin{itemize}

\item[(CPL)] All classical propositional tautologies and Modus Ponens

\item[(S5$_{\Box}$)] All $S5$ axioms and rules for $\Box$

\item[(A1$_{I^{u}}$)] $(I^{u} \phi \wedge \neg I^{u} \bar{\phi}) \rightarrow \phi$

\item[(A2$_{I^{u}}$)] $\neg I^{u} \bar{\phi} \rightarrow \neg I^{u}( \phi \vee \neg \phi)$

\item[(A3$_{I^{u}}$)] $(\neg I^{u} \phi \wedge \neg I^{u} \psi) \rightarrow \neg I^{u}(\phi \wedge \psi)$

\item[(A4$_{I^{u}}$)] $\neg I^{u}\bar{\phi} \rightarrow \Box \neg I^{u} \bar{\phi}$

\item[(A5$_{I^{u}}$)] $I^{u} \bar{\phi} \rightarrow I^{u} \phi$

\item[(A6$_{I^{u}}$)] $(\phi \wedge \neg I^{u} \bar{\phi} \wedge \neg I^{u} \bar{\psi} \wedge I^{u}(\phi \vee \psi)) \rightarrow I^{u} \phi$

\item[(A7$_{I^{u}}$)] $(\neg I^{u} (\phi \vee \psi) \wedge \neg I^{u}( \phi \rightarrow \psi)) \rightarrow \neg I^{u} \psi$

\item[(A8$_{I^{u}}$)] $\neg I^{u} \bar{\phi} \rightarrow \neg I^{u} \bar{\psi}$, if $Var(\psi) \subseteq Var(\phi)$

\item[(NEC$_{I^{u}}$)] From $\vdash \phi$ infer $\vdash \neg I^{u} \bar{\phi} \rightarrow \neg I^{u} \phi$

\item[(RE$_{I^{u}}$)] From $\vdash \phi \leftrightarrow \psi$ infer $\vdash (\neg I^{u} \bar{\phi} \wedge \neg I^{u} \bar{\psi}) \rightarrow (\neg I^{u} \phi \leftrightarrow \neg I^{u} \psi)$

\end{itemize}

\end{defi}

The axiom (A1$_{I^{u}}$) redefines the relation between ignorance and factivity. Differently to \texttt{IU}, the operator $I^{u}$ in \texttt{HIU} is not factive: it is not the case that ignorance of $\phi$ implies the truth of $\phi$.\footnote{It is possible to redefine $I^{u}$ in a way that it remains factive. However, we leave this task for future investigations.} However,  (A1$_{I^{u}}$) preserves a form of factivity, since it claims that if one is ignorant of $\phi$ and grasps the topic of $\phi$, then $\phi$ is true. The reading of the axioms (A2$_{I^{w}}$) - (A5$_{I^{u}}$) is similar to the reading of the axioms (A2$_{I^{w}}$) - (A5$_{I^{w}}$), respectively. The axiom (A6$_{I^{u}}$) states that if $\phi$ is true, the agent is ignorant that $\phi \vee \psi$ for some $\psi$, but she grasps the topics of both $\phi$ and $\psi$, then she is ignorant that $\phi$. The axiom (A7$_{I^{u}}$) states that if an agent is not ignorant that $\phi \vee \psi$ and that $\phi \rightarrow \psi$, then she is not ignorant of $\psi$. The reading of the axiom (A8$_{I^{u}}$) is similar to the reading of the axiom (A7$_{I^{w}}$). The rule (NEC$_{I^{u}}$)  and (RE$_{I^{u}}$) are similar to (NEC$_{I^{w}}$) and (RE$_{I^{w}}$) respectively, and they will be useful in dealing with logical omniscience in the following section.

\begin{theor}
\label{theorHIUsound}

The system $\mathtt{HIU}$ is sound.

\end{theor}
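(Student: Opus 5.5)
The proof of Theorem~\ref{theorHIUsound} is the usual one: I would show that every axiom of $\mathtt{HIU}$ is valid in the class of all topic-sensitive models, and that Modus Ponens, the $S5_{\Box}$ rules, (NEC$_{I^{u}}$) and (RE$_{I^{u}}$) preserve validity. An induction on the length of derivations then gives the result.

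CPL and Modus Ponens need no comment. The $S5_{\Box}$ axioms and rules hold because, by clause 4 of Definition~\ref{semHIW}, $\Box$ is the global modality over $W$, whose implicit relation $W\times W$ is an equivalence relation. Before turning to the $I^{u}$ axioms, I would record three auxiliary facts.

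\begin{itemize}
\item[(i)] The grasping lemma displayed before the definition of $\mathtt{HIU}$: $\mathcal{M},w\models\neg I^{u}\bar{\phi}$ iff $t(\phi)\sqsubseteq\mathfrak{K}$. In particular, its truth value does not depend on $w$.
\item[(ii)] Since $t(\phi)=\oplus\{t(p):p\in Var(\phi)\}$, we have $t(\bar\phi)=t(\phi)=t(\phi\vee\neg\phi)$. Also, $Var(\psi)\subseteq Var(\phi)$ implies $t(\psi)\sqsubseteq t(\phi)$.
\item[(iii)] In the join semilattice $(T,\oplus)$, $a\oplus b\sqsubseteq\mathfrak{K}$ iff $a\sqsubseteq\mathfrak{K}$ and $b\sqsubseteq\mathfrak{K}$.
\end{itemize}

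Using these facts, I would unfold the clause of Definition~\ref{HIUdefi} for each axiom. Throughout I use the negated form: $\neg I^{u}\phi$ holds at $w$ iff $t(\phi)\sqsubseteq\mathfrak{K}$ and ($\phi$ is false at $w$, or $\phi$ holds at every $R$-successor of $w$).

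\begin{itemize}
\item (A4$_{I^{u}}$) follows from (i), because the antecedent is world-independent.
\item (A5$_{I^{u}}$) and (A8$_{I^{u}}$) follow from (i) and (ii).
\item (A1$_{I^{u}}$): grasping together with $I^{u}\phi$ forces the second disjunct of the clause, which contains the truth of $\phi$.
\item (A2$_{I^{u}}$): by (ii) the topic of $\phi\vee\neg\phi$ is grasped, and $\phi\vee\neg\phi$ holds at every successor.
\item (A3$_{I^{u}}$): by (iii) the topic $t(\phi)\oplus t(\psi)$ is grasped. If $\phi\wedge\psi$ is true at $w$, then $\phi$ and $\psi$ are each true at $w$. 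Non-ignorance of each then forces both to hold at all successors, hence so does the conjunction.
\item (A6$_{I^{u}}$): since $t(\phi)$ and $t(\psi)$ are grasped, $I^{u}(\phi\vee\psi)$ yields a successor $w'$ where $\phi\vee\psi$ fails, so $\neg\phi$ holds at $w'$. Together with the truth of $\phi$ at $w$, this gives $I^{u}\phi$.
\item (A7$_{I^{u}}$): non-ignorance of $\phi\vee\psi$ gives $t(\phi)\oplus t(\psi)\sqsubseteq\mathfrak{K}$, hence $t(\psi)\sqsubseteq\mathfrak{K}$. If $\psi$ is true at $w$, then both $\phi\vee\psi$ and $\phi\rightarrow\psi$ are true at $w$. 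So both hold at every successor, and there $\psi$ follows by a case split on $\phi$. Thus $\neg I^{u}\psi$.
\end{itemize}

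For the rules, suppose $\phi$ is valid in every topic-sensitive model.

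\begin{itemize}
\item (NEC$_{I^{u}}$): in any model where $t(\phi)\sqsubseteq\mathfrak{K}$, $\phi$ holds at all successors, so $\neg I^{u}\phi$ holds.
\item (RE$_{I^{u}}$): if $\phi\leftrightarrow\psi$ is valid, then $\phi$ and $\psi$ have the same extension in every model. When both topics are grasped, the world-dependent parts of the clauses for $\neg I^{u}\phi$ and $\neg I^{u}\psi$ coincide, so the two formulas agree at every world.
\end{itemize}

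The step I expect to need the most care is keeping track of topics in (A3$_{I^{u}}$) and (A7$_{I^{u}}$). There one must pass from grasping a compound topic to grasping its parts, or the converse, via (ii) and (iii). One must also note that topic transparency makes $t$ insensitive to nested occurrences of $I^{u}$ and $\Box$, so these facts apply to arbitrary formulas of $\mathcal{L}_{I^{u}}$.
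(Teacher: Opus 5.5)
Your proposal is correct and follows the same route as the paper: a direct semantic check, on arbitrary topic-sensitive models, of each axiom and of (NEC$_{I^{u}}$) and (RE$_{I^{u}}$), using the fact that $\neg I^{u}\bar{\phi}$ holds exactly when $t(\phi)\sqsubseteq\mathfrak{K}$. Your explicit topic bookkeeping via facts (ii) and (iii) is slightly more careful than the paper's, e.g.\ in (A7$_{I^{u}}$), where the paper's argument never separately treats the case that $I^{u}\psi$ holds merely because $t(\psi)\not\sqsubseteq\mathfrak{K}$.
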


\begin{theor}
\label{theorHIUcomplete}

The system $\mathtt{HIU}$ is complete.

\end{theor}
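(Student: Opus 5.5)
We prove completeness of $\mathtt{HIU}$ by a canonical model construction modelled on the one for $\mathtt{HIW}$ (Appendix \ref{appHIWcomplete}), which in turn adapts the topic-sensitive canonical models of \citet{Ozgun2023} and \citet{Berto2021}. Since $\Box$ is a global modality, we work inside a single $\Box$-cluster. Fix a maximal consistent set (MCS) $\Gamma_0$ that contains a given unprovable formula's negation. Let $W$ be the set of MCSs $\Delta$ with $\{\phi : \Box\phi\in\Gamma_0\}\subseteq\Delta$. By (S5$_\Box$), every member of $W$ has the same $\Box$-formulas, so the clause for $\Box$ reduces to the standard truth-lemma step for a universal modality. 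By (A4$_{I^{u}}$) and its dual (obtained with S5, since $I^{u}\bar\phi \rightarrow \Box I^{u}\bar\phi$ follows from (A4$_{I^{u}}$) by contraposition and $\lozenge\Box\to\Box$), the set $G=\{\phi : \neg I^{u}\bar\phi\in\Gamma_0\}$ is the same for every $\Delta\in W$. This uniformity is what lets a single designated topic $\mathfrak{K}$ serve all worlds.

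\textbf{Topic part.} Take $T=\mathcal{P}(\mathtt{Prop})$ with $\oplus=\cup$, and $t(p)=\{p\}$, so that $t(\phi)=Var(\phi)$. Set $\mathfrak{K}=\{p : \neg I^{u}\bar p\in\Gamma_0\}$. We need
\[
t(\phi)\sqsubseteq\mathfrak{K} \quad\text{iff}\quad \neg I^{u}\bar\phi\in\Gamma_0 .
\]
The right-to-left direction follows from (A8$_{I^{u}}$), using $Var(p)\subseteq Var(\phi)$. For left-to-right, note that $\bar\phi$ is, up to CPL, the conjunction of the $\bar p$ for $p\in Var(\phi)$. So we use (A3$_{I^{u}}$) to combine the $\neg I^{u}\bar p$, and then (RE$_{I^{u}}$) to pass to $\bar\phi$. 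The side condition of (RE$_{I^{u}}$) is that the topics of both formulas are grasped, and this is available because the formulas have the same variables and (A8$_{I^{u}}$) applies in both directions. Some care is needed here: (RE$_{I^{u}}$) requires the premise $\neg I^{u}\overline{(\bigwedge \bar p)}$, and $\overline{(\bigwedge\bar p)}$ is again a conjunction of excluded middles over the same variables. I would prove an auxiliary lemma showing that $\neg I^{u}\bar\phi$ and $\neg I^{u}\bar\psi$ are provably equivalent whenever $Var(\phi)=Var(\psi)$, using (A8$_{I^{u}}$) twice.

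\textbf{Relational part.} Define $R\Delta\Theta$ iff for every $\phi$ with $\neg I^{u}\phi\in\Delta$, $\phi\in\Delta$ and $\phi\in G$, we have $\phi\in\Theta$. This is Fan's \citeyear{Fan2015a} canonical relation for $\mathtt{IU}$, restricted to grasped formulas. The truth lemma for $I^{u}\phi$ splits into two cases.

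If $\phi\notin G$, then $I^{u}\bar\phi\in\Delta$, so (A5$_{I^{u}}$) gives $I^{u}\phi\in\Delta$. Semantically $t(\phi)\not\sqsubseteq\mathfrak{K}$, so both sides agree.

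If $\phi\in G$, the claim reduces to: $I^{u}\phi\in\Delta$ iff $\phi\in\Delta$ and some $R$-successor omits $\phi$. (A1$_{I^{u}}$) gives factivity. For the existence direction, given $I^{u}\phi\in\Delta$ we show that $\{\psi : \neg I^{u}\psi\in\Delta,\ \psi\in\Delta,\ \psi\in G\}\cup\{\neg\phi\}\cup\{\chi:\Box\chi\in\Gamma_0\}$ is consistent. Suppose not. Then some finite conjunction $\psi$ of such formulas, together with $\Box$-facts, proves $\psi\to\phi$. Via (A3$_{I^{u}}$) we get $\neg I^{u}\psi$. We then want $\neg I^{u}(\psi\vee\phi)$ and $\neg I^{u}(\psi\to\phi)$, and apply (A7$_{I^{u}}$) to conclude $\neg I^{u}\phi$, a contradiction. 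The $\Box$-facts are absorbed because the relevant implications are then $\Box$-provable rather than theorems. I would handle this by proving, from (NEC$_{I^{u}}$), (RE$_{I^{u}}$) and S5, a derived rule of the form $\Box(\phi\leftrightarrow\psi)\wedge\neg I^{u}\bar\phi\wedge\neg I^{u}\bar\psi\to(\neg I^{u}\phi\leftrightarrow\neg I^{u}\psi)$. Alternatively, I would choose $\Gamma_0$'s cluster so that only theorems are needed.

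For the converse direction, suppose $\phi\in\Delta$, $\phi\in G$, and $\neg I^{u}\phi\in\Delta$. Then $\phi$ belongs to every successor by the definition of $R$, which is immediate.

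\textbf{Main obstacle.} The hard part is the consistency argument just described. We must derive $\neg I^{u}(\psi\vee\phi)$ and $\neg I^{u}(\psi\to\phi)$ from the fact that these formulas are (box-)equivalent to $\top$-like formulas, using (A2$_{I^{u}}$), (NEC$_{I^{u}}$) and (RE$_{I^{u}}$), while at every step tracking the grasping side conditions through $G$ and the topic-closure lemma. (A6$_{I^{u}}$) will likely be needed to transfer ignorance between $\phi$ and disjunctions in this step. Once the truth lemma holds, $\Gamma_0$ falsifies the original formula in the canonical model, which completes the proof.
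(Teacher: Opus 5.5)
You have found the real obstacle, but the step you propose to get past it cannot be carried out. To make the successor of $\Delta$ land in $W$, you rightly add $\{\chi : \Box\chi\in\Gamma_0\}$ to the set whose consistency you prove. An inconsistency then yields only $\Box(\psi\to\phi)\in\Delta$, not $\vdash\psi\to\phi$, so (NEC$_{I^{u}}$) does not apply. You therefore need a bridge principle, either $(\Box\chi\wedge\neg I^{u}\bar{\chi})\to\neg I^{u}\chi$ or your $(\Box(\phi\leftrightarrow\psi)\wedge\neg I^{u}\bar{\phi}\wedge\neg I^{u}\bar{\psi})\to(\neg I^{u}\phi\leftrightarrow\neg I^{u}\psi)$. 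Neither is derivable in $\mathtt{HIU}$.

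To see this, reinterpret $\Box$ by an arbitrary equivalence relation $E$ and let $R$ leave the $E$-class. Every axiom and rule of $\mathtt{HIU}$ stays sound:
the S5 schemas only need $E$ to be an equivalence; the only axiom mixing $\Box$ and $I^{u}$ is (A4$_{I^{u}}$), whose antecedent depends on topics alone; and (NEC$_{I^{u}}$), (RE$_{I^{u}}$) act only on theorems. Now take $W=\{w,u\}$, $E$ the identity, $R=\{(w,u)\}$, $v(p)=\{w\}$ and $t(p)\sqsubseteq\mathfrak{K}$. At $w$ we have $\Box p$, $\neg I^{u}\bar{p}$, $\neg I^{u}(p\vee\neg p)$ and $I^{u}p$. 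This falsifies both bridges; for yours, put $\phi:=p$ and $\psi:=p\vee\neg p$.

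In the paper's models, by contrast, $\Box$ is global and $R\subseteq W\times W$, so $(\Box p\wedge\neg I^{u}\bar{p})\to\neg I^{u}p$ is valid. It is thus a valid non-theorem of $\mathtt{HIU}$. Neither your fallback of choosing the cluster nor any other construction can establish the theorem as stated.

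The paper's own proof has exactly this hole, unacknowledged. In the ($\Leftarrow$) case of its Truth Lemma it extends $\Gamma[I^{u}]\cup\{\neg\psi\}$ by Lindenbaum's lemma. It never checks that the resulting set is $\sim$-related to $\Gamma_0$, i.e.\ lies in $W^{C}$.

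The repair is to add an axiom such as $(\Box\chi\wedge\neg I^{u}\bar{\chi})\to\neg I^{u}\chi$, which is sound for the intended semantics. With it your argument closes:
\begin{enumerate}
\item $\Box(\psi\to\phi)\in\Delta$ together with the grasping facts gives $\neg I^{u}(\psi\to\phi)$.
\item The contrapositive of (A6$_{I^{u}}$), applied to $\psi\in\Delta$, $\neg I^{u}\psi$, $\neg I^{u}\bar{\psi}$ and $\neg I^{u}\bar{\phi}$, gives $\neg I^{u}(\psi\vee\phi)$. This formula is equivalent to $\phi$, not to anything tautological, which is why (A6$_{I^{u}}$) is indispensable here.
\item (A7$_{I^{u}}$) then yields $\neg I^{u}\phi$, contradicting $I^{u}\phi\in\Delta$.
\end{enumerate}
The rest of your plan is a legitimate alternative to the paper's two-element topic model and its disjunction-based $\Gamma[I^{u}]$. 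That includes the powerset topic model, your Fan-style canonical relation, and the uniformity of grasping across the cluster via (A4$_{I^{u}}$).
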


The proofs can be found in Appendices \ref{appHIUsound} and \ref{appHIUcomplete}.

\

It is easy to see that the example of William III presented at the beginning of this section can be handled precisely as in \texttt{HIW} by constructing a model in which the two logically equivalent propositions, $p$ and $p \wedge (q \vee \neg q)$, are distinguishable based on the agent's state of ignorance.

\subsection{A logic for Hyperintensional Disbelieving Ignorance}

Now, let us turn our attention to $I^{d}$. This operator, differently from $I^{w}$ and $I^{u}$, is not  reducible to standard $\Box$. Thus, it represents a distinct kind of formalism, thereby introducing a different challenge when it comes to incorporating hyperintensionality.

The language $\mathcal{L}_{I^{d}, G}$ is defined as follows:

\begin{center}

$ \phi:= p \mid \neg \phi \mid \phi \wedge \phi \mid I^{d} \phi \mid G \phi \mid \Box \phi$

\end{center}

Propositional operators are defined standardly. Expressions of the form `$I^{d} \phi$' should be read as `the agent is disbelievingly ignorant of $\phi$.' Expressions of the form `$G \phi$' should be read as `the agent grasps the topic of $\phi$.' In contrast to \texttt{HIW} and \texttt{HIU}, the operator $G$ is introduced as a primitive one. This choice is made because the semantic conditions for the $I^{d}$ operator do not readily lend themselves to a straightforward definition of grasping.

The semantics is defined on the topic-sensitive models. We replace the case of $I^{w}$ (or $I^{u}$) with $I^{d}$ and add the operator $G$ as follows.

\begin{defi}[Semantics for $\mathcal{L}_{I^{d}, G}$]

Let $\mathcal{M} = (W, R, v, \mathcal{T})$, then

\begin{itemize}

\item $\mathcal{M}, w \models I^{d} \phi$ iff $t(\phi) \sqsubseteq \mathfrak{K}$ and $\mathcal{M}, w \models \phi$ and for all $w'$ which are not $w$, if $Rww'$, then $\mathcal{M}, w' \models \neg \phi$;

\item $\mathcal{M}, w \models G \phi$ iff $t(\phi) \sqsubseteq \mathfrak{K}$.

\end{itemize}

\end{defi}

Note that the definition of $I^{d}$ differs from the definitions of $I^{w}$ and $I^{u}$. In the cases of $I^{w}$ and $I^{u}$, the absence of an agent's grasp of a proposition's topic constitutes a sufficient condition to deem the agent ignorant of the proposition. However, in the case of disbelieving ignorance, such a consideration seems unnatural. In particular, we consider that, to be disbelievingly ignorant of $\phi$, an agent should grasp the topic of $\phi$. For instance, an agent can be disbelievingly ignorant of the fact that ``Rome is the capital of Italy'' only if she is (falsely) convinced that this is not the case. Such a conviction presupposes a grasping of the topic of the proposition. Otherwise, if the agent does not grasp the topic of ``Rome is the capital of Italy,'' she would instead experience a different type of ignorance--namely, complete ignorance.

Now we define system $\mathtt{HDI}$, which stands for `Hyperintensional Disbelieving Ignorance.'

\begin{defi}[System $\mathtt{HDI}$]

\

\begin{itemize}

\item[(CPL)] All classical propositional tautologies and Modus Ponens

\item[(S5$_{\Box}$)] All S5 axioms and rules for $\Box$

\item[(G1)] $G \phi \leftrightarrow G\neg \phi$

\item[(G2)] $G \phi \leftrightarrow G I^{d} \phi$

\item[(G3)] $G\phi  \leftrightarrow G \Box \phi$

\item[(G4)] $(G \phi \wedge G \psi) \leftrightarrow G (\phi \wedge \psi)$

\item[(G5)] $G\phi \rightarrow \Box G \phi$

\item[(HIR)] From $\vdash \phi \rightarrow \psi$, infer $\vdash \phi \rightarrow (I^{d} \psi \rightarrow (G \phi \rightarrow I^{d} \phi))$

\item[(A1$_{I^{d}}$)] $I^{d} \phi \rightarrow \phi$

\item[(A2$_{I^{d}}$)] $(I^{d} \phi \wedge I^{d} \psi) \rightarrow I^{d}(\phi \vee \psi)$

\item[(A3$_{I^{d}}$)] $I^{d} \phi \rightarrow G \phi$

\end{itemize}

\end{defi}

The axioms (G1) - (G4) define the grasping operator as topic-transparent. The reading of the axiom (G5) is the same as of the axiom (A4$_{I^{w}}$) or (A4$_{I^{u}}$). The rule (HIR) means that whenever $\psi$ is a logical consequence of $\phi$, the truth of $\phi$, disbelieving ignorance of $\psi$ and grasping of $\phi$ imply disbelieving ignorance of $\phi$. The axiom (A1$_{I^{d}}$) expresses the factivity of disbelieving ignorance. The axiom (A2$_{I^{d}}$) states that if one is disbelievingly ignorant of $\phi$ and of $\psi$, then she is disbelievingly ignorant of their disjunction. 

\begin{theor}

The system $\mathtt{HDI}$ is sound.

\end{theor}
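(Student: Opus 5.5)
Soundness of $\mathtt{HDI}$ is shown by the usual route. Every axiom is valid in every topic-sensitive model. Every rule preserves validity in every model. An induction on the length of derivations then gives that every theorem of $\mathtt{HDI}$ is valid.

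The propositional part is immediate. (CPL) and Modus Ponens are valid because the Boolean clauses are classical. (S5$_{\Box}$) is valid because $\Box$ is the global modality, so its accessibility relation is $W\times W$, an equivalence relation.

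The grasping axioms all rest on one observation: $t(\phi)=\oplus\{t(p): p\in Var(\phi)\}$, so the topic depends only on $Var(\phi)$, and $M,w\models G\phi$ does not depend on $w$. We check them in turn.
\begin{itemize}
\item[(G1)--(G3)] These hold because $Var(\neg\phi)=Var(I^{d}\phi)=Var(\Box\phi)=Var(\phi)$.
\item[(G4)] This uses $t(\phi\wedge\psi)=t(\phi)\oplus t(\psi)$ together with the semilattice fact that $a\oplus b\sqsubseteq\mathfrak K$ iff $a\sqsubseteq\mathfrak K$ and $b\sqsubseteq\mathfrak K$. For the left-to-right direction: from $a\oplus\mathfrak K=\mathfrak K$ and $b\oplus\mathfrak K=\mathfrak K$, associativity gives $(a\oplus b)\oplus\mathfrak K=a\oplus(b\oplus\mathfrak K)=\mathfrak K$. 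For the converse: if $(a\oplus b)\oplus\mathfrak K=\mathfrak K$, then $a\oplus\mathfrak K=a\oplus(a\oplus b\oplus\mathfrak K)=a\oplus b\oplus\mathfrak K=\mathfrak K$ by idempotence, and symmetrically for $b$.
\item[(G5)] This holds because the truth of $G\phi$ is world-independent, so $G\phi$ true somewhere means true everywhere, hence $\Box G\phi$.
\end{itemize}

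The ignorance axioms follow directly from the semantic clause for $I^{d}$.
\begin{itemize}
\item[(A1$_{I^{d}}$)] The clause for $I^{d}\phi$ includes $M,w\models\phi$.
\item[(A3$_{I^{d}}$)] The clause for $I^{d}\phi$ includes $t(\phi)\sqsubseteq\mathfrak K$, which is exactly $G\phi$.
\item[(A2$_{I^{d}}$)] Assume $I^{d}\phi$ and $I^{d}\psi$ hold at $w$. Then:
  \begin{itemize}
  \item $t(\phi),t(\psi)\sqsubseteq\mathfrak K$, so by the (G4) fact $t(\phi\vee\psi)=t(\phi)\oplus t(\psi)\sqsubseteq\mathfrak K$ (note $\vee$ is defined via $\neg,\wedge$, so its topic is the same fusion);
  \item $\phi$ holds at $w$, so $\phi\vee\psi$ holds at $w$;
  \item every $w'\neq w$ with $Rww'$ satisfies $\neg\phi$ and $\neg\psi$, hence $\neg(\phi\vee\psi)$.
  \end{itemize}
  So $I^{d}(\phi\vee\psi)$ holds at $w$.
\end{itemize}

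The rule (HIR) is the only step needing care, because we must use that its premise is valid in the model, not merely true at a world. Assume $\phi\rightarrow\psi$ is valid in $M$. Fix $w$ with $M,w\models\phi$, $M,w\models I^{d}\psi$ and $M,w\models G\phi$, and check the three conjuncts of $I^{d}\phi$.
\begin{itemize}
\item $t(\phi)\sqsubseteq\mathfrak K$ is given by $G\phi$. The grasping condition for $\phi$ cannot be inherited from $I^{d}\psi$, since $\psi$ may have different variables; this is precisely why the hypothesis $G\phi$ appears in the rule.
\item $\phi$ holds at $w$ by assumption.
\item For any $w'\neq w$ with $Rww'$, $I^{d}\psi$ gives $\neg\psi$ at $w'$. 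Since $\phi\rightarrow\psi$ holds at $w'$ by validity, contraposition gives $\neg\phi$ at $w'$.
\end{itemize}
Hence $I^{d}\phi$ holds at $w$, and the conclusion of (HIR) is valid in $M$.

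With all axioms valid and all rules validity-preserving, soundness follows by induction on derivations.
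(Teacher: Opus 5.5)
Your proof is correct and follows the paper's route: you check each axiom for validity, show that (HIR) preserves validity, and conclude by induction on derivations. The differences are cosmetic. You verify (HIR) directly rather than by contradiction, you work with validity in a fixed model rather than in all models, and you spell out the semilattice argument behind (G4); in (A2) you also correctly use $\neg\phi$ and $\neg\psi$ at the accessible worlds $w'\neq w$, where the paper's write-up has a slip.
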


\begin{theor}

The system $\mathtt{HDI}$ is complete.

\end{theor}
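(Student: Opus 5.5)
We will prove completeness of $\mathtt{HDI}$ by a canonical model construction. It combines the canonical model for \texttt{DI} due to \citet{Gilbert2021} with the topic-sensitive canonical construction of \citet{Ozgun2023}.

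Since $\Box$ is a global S5 modality, a single canonical model over all maximal consistent sets (MCSs) will not work. Instead, we fix a consistent formula $\phi_0$, extend it to an MCS $\Gamma_0$, and take $W$ to be the set of MCSs $\Delta$ such that $\{\psi : \Box\psi \in \Gamma_0\} \subseteq \Delta$. By (S5$_{\Box}$), $\Box\psi \in \Delta$ iff $\psi \in \Delta'$ for all $\Delta' \in W$, which gives the truth lemma for $\Box$.

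By (G5) together with its S5 consequence $\neg G\phi \rightarrow \Box\neg G\phi$, every world in $W$ contains exactly the same $G$-formulas. We define the topic model syntactically:
\begin{itemize}
\item $T$ is the set of finite sets of propositional variables, with $\oplus = \cup$;
\item $t(p) = \{p\}$;
\item $\mathfrak{K} = \{p : Gp \in \Gamma_0\}$.
\end{itemize}
Then $t(\phi) = Var(\phi)$. Axioms (G1)–(G4) give, by induction on $\phi$, that $G\phi \in \Delta$ iff $Gp \in \Delta$ for every $p \in Var(\phi)$, that is, iff $t(\phi) \sqsubseteq \mathfrak{K}$. This settles the truth lemma for $G$.

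For $I^d$ we follow Gilbert and Venturi's construction for \texttt{DI} and define $R\Delta\Delta'$ iff either $\Delta = \Delta'$, or for every $\psi$ with $I^d\psi \in \Delta$ we have $\neg\psi \in \Delta'$. The left-to-right direction of the truth lemma for $I^d$ then follows from the definition of $R$, (A1$_{I^d}$) and (A3$_{I^d}$).

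The right-to-left direction is the main obstacle. Suppose $\psi \in \Delta$, $G\psi \in \Delta$ and $I^d\psi \notin \Delta$. We must produce $\Delta' \neq \Delta$ with $\psi \in \Delta'$ such that $R\Delta\Delta'$. Concretely, we need the set
\[ \{\psi\} \cup \{\neg\chi : I^d\chi \in \Delta\} \cup \{\theta : \Box\theta \in \Gamma_0\} \]
to be consistent and extendable to an MCS distinct from $\Delta$.

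For consistency, suppose otherwise. Then $\vdash (\psi \wedge \Box$-facts$) \rightarrow (\chi_1 \vee \dots \vee \chi_n)$ with each $I^d\chi_i \in \Delta$. By (A2$_{I^d}$), $I^d(\chi_1\vee\dots\vee\chi_n) \in \Delta$. We then use (HIR), applied to $\psi' := \psi \wedge \bigwedge \Box\theta_j$, together with $G\psi$. Here we must check that the boxed conjuncts keep $G\psi'$ available; this needs $G\Box\theta_j$, which we handle via (G3), (G4), or by absorbing the $\Box$-facts through necessitation and the S5 rules. Applying (HIR) yields $I^d\psi \in \Delta$, a contradiction. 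The delicate point is that (HIR) requires $G$ of the antecedent, so the topics of the auxiliary formulas must be controlled; we expect to rearrange the derivation so that only $\psi$ occurs in the antecedent.

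To ensure $\Delta' \neq \Delta$, we follow the \texttt{DI} trick and split variables. If the only extension were $\Delta$ itself, we would duplicate worlds, taking a copy of $\Delta$ tagged with a bit and treating the copies as distinct points with the same theory. This unravelling must preserve the $\Box$ and $G$ clauses, which it does since those clauses are global and uniform.

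With the truth lemma established, $\phi_0$ holds at $\Gamma_0$ in a topic-sensitive model, and completeness follows. The key step remains the consistency argument via (HIR) and the handling of the global-modality conjuncts in its premise.
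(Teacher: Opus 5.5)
\textbf{There is a genuine gap, and it sits exactly at the step you flag.} It cannot be closed, because $\mathtt{HDI}$ has no principle linking $\Box$ to $I^{d}$.

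From the inconsistency of $\{\psi\}\cup\{\neg\chi : I^{d}\chi\in\Delta\}\cup\{\theta : \Box\theta\in\Gamma_0\}$ you only get $\Box(\psi\rightarrow\chi_1\vee\dots\vee\chi_n)\in\Delta$. But (HIR) is a rule about theorems. What you would need is the boxed axiom $\Box(\phi\rightarrow\psi)\rightarrow(\phi\rightarrow(I^{d}\psi\rightarrow(G\phi\rightarrow I^{d}\phi)))$. Your detour through $\psi'=\psi\wedge\bigwedge_j\Box\theta_j$ fails twice:
\begin{itemize}
\item $G\theta_j$ need not be in $\Delta$, since the $\theta_j$ may contain ungrasped variables.
\item Even granting $G\psi'$, (HIR) returns $I^{d}\psi'$, and nothing in $\mathtt{HDI}$ gets you from there to $I^{d}\psi$.
\end{itemize}

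No cleverer derivation exists. Every axiom and rule of $\mathtt{HDI}$ stays sound if $\Box$ is evaluated along an arbitrary equivalence relation $E$ rather than globally. The S5 part only needs $E$ to be an equivalence, (G5) only needs $G$ to be world-independent, and (HIR) only uses validity of its premise. Now take $W=\{w,u\}$, $E$ the identity, $R=\{(w,u)\}$, $v(p)=\{w,u\}$, $v(q)=\{w\}$, and all topics grasped. At $w$ the formula
\[
(\Box(p\rightarrow q)\wedge p\wedge I^{d}q\wedge Gp)\rightarrow I^{d}p
\]
fails. Yet it is valid in the paper's semantics, where the global $\Box$ forces $p\rightarrow q$ at every $R$-successor. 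So it is a valid non-theorem. For a $\Gamma_0$ containing its negation, every successor of $\Gamma_0$ in your model contains $\neg q$ and $p\rightarrow q$. Hence $I^{d}p$ is true at $\Gamma_0$ although $I^{d}p\notin\Gamma_0$. Duplicating worlds does not help, since the obstruction is consistency, not distinctness.

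\textbf{There is a second, independent failure in your $G$ clause.} Your induction showing $G\phi\in\Delta$ iff $Gp\in\Delta$ for all $p\in Var(\phi)$ breaks at $\phi=G\chi$, because (G1)--(G4) say nothing about $GG\chi$. Indeed $Gp\rightarrow GGp$ is valid, since $t(Gp)=t(p)$, but it is underivable. To see this, assign every formula of the form $G\chi$ a fixed ungrasped topic: all of $\mathtt{HDI}$ stays sound under this assignment. A minor point as well: $\mathfrak{K}$ may be an infinite set of variables, so $T$ must consist of all sets of variables, not only finite ones.

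\textbf{Consequence.} With the system as stated, the completeness claim is false, so no proof strategy can succeed. The paper's own proof passes over both points:
\begin{itemize}
\item Proposition~\ref{useful8} extends $\{\phi\}\cup\Gamma[I^{d}]$ to an arbitrary maximal consistent set without checking that it lies in $W^{C}$.
\item Proposition~\ref{useful5} cites (G1)--(G4) for every $\phi$, including those of the form $G\chi$.
\end{itemize}

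\textbf{Repair.} If you add the boxed (HIR) displayed above and $G\phi\leftrightarrow GG\phi$ as axioms, your outline goes through. Your duplication idea is then genuinely needed. A $\Box$-class can consist of a single maximal consistent set with $\Gamma[I^{d}]=\emptyset$, for example the theory of a two-world model whose worlds are swapped by an automorphism. In that case the paper's fresh-variable trick cannot produce a second world.
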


The proofs can be found in Appendices \ref{appHDIsound} and \ref{appHDIcomplete}.

\

Given the specific type of ignorance captured by the system \texttt{HDI}, it seems inappropriate to directly address the example of William III as it is presented at the beginning of this section. Specifically, in the initial example, William III is not disbelievingly ignorant of both $p$ and $p \wedge (q \vee \neg q)$. However, we can present alternative examples of equivalent propositions that can be distinguished within \texttt{HDI}. Imagine a scenario where William III disbelieves that the war with England can be avoided ($p$), but he is not disbelievingly ignorant of $p \wedge (q \vee \neg q)$, because he does not grasp the concept of nuclear war. In this context, consider the following propositions.

\begin{itemize}

\item[(1$^{HDI}$)] William III disbelieves that $p$.

\item[(2$^{HDI}$)] William III disbelieves that $p \wedge (q \vee \neg q)$.

\end{itemize}

In this scenario, it appears that (1$^{HDI}$) is true, but not (2$^{HDI}$). This can be modelled in \texttt{HDI} as follows. Let $\mathcal{M} = (W, R, v, \mathcal{T})$, where $W = \{w, w'\}$, $R = \{(w,w')\}$, $v(p) = v(q) = \{w\}$, and $\mathcal{T} = (T, \oplus, t, \mathfrak{K})$, where $T =\{a, b\}$, $b \sqsubset a$, $t(q) = a$, and $t(p) = b = \mathfrak{K}$. In this model, it is clear that $\mathcal{M} \models p \leftrightarrow (p \wedge (q \vee \neg q))$. We also have that $\mathcal{M}, w \models I^{d} p$, because $\mathcal{M}, w \models p$, for all $w'$ s.t. they are not $w$ if $Rww'$ then $\mathcal{M}, w' \models \neg p$, and $t(p) \sqsubseteq \mathfrak{K}$; and $\mathcal{M}, w \models \neg I^{d} p \wedge (q \vee \neg q)$, because $t(q) \not \sqsubseteq \mathfrak{K}$. Thus, even though $p$ and $p \wedge (q \vee \neg q)$ are equivalent, while William III disbelieves $p$, he does not disbelieve $p \wedge (q \vee \neg q)$ because he does not grasp the topic $q$.

\section{Ignorance and Logical Omniscience}
\label{SecOmniscience}

Various works have explored the connection between hyperintensionality and the intricate issue of logical omniscience (see, e.g., \citet{Fagin1995}). Specifically, hyperintensionality has emerged as a natural solution to this well-known problem that plagues epistemic logic. Logical omniscience, as initially identified by \citet{Hintikka1962} in his influential book, appears typically in systems of epistemic logic that are closed under logical implication:

\begin{itemize}

\item[$(LI)$] if $\phi \rightarrow \psi$ and $K \phi$, then $K \psi$.

\end{itemize}

In a nutshell, the agents modelled via standard Kripke semantics are assumed to be perfect reasoners. However, this assumption becomes challenging when aiming to model the knowledge of real-world agents, such as humans or computers, given inherent practical limitations on their reasoning capacities. Recent approaches to logical omniscience highlight it as a broader and more pervasive phenomenon in epistemic logic. This stems from the observation that the operator $K$ in a system is closed under various principles, including logical equivalence, conjunction, or disjunction - not solely logical implication (see \citet{Fagin1995}). A general formulation of logical omniscience $(LO)$ can be as follows:

\begin{itemize}

\item[$(LO_{K})$] If an agent knows $\phi$, she knows all the consequences of $\phi$.

\end{itemize}

This formulation emphasizes the idealized nature of agents who possess knowledge of all consequences of their knowledge and, in particular, they know all the tautologies. Consequently, the situation poses a general challenge for the chosen formalism. On one hand, an agent modelled by the system should not logically derive more than is reasonable to expect from a real-world agent, considering limited capacities or attention for humans and constrained time and space resources for computers. On the other hand, the agent should retain the ability to logically derive everything we can reasonably expect her to derive in a given situation. Thus, solving logical omniscience is about finding the right balance between the level of idealization and the logical capabilities of the agents modelled by the system. While logical omniscience is commonly articulated in epistemic (or doxastic) terms, it prompts a broader inquiry into how this challenge materializes when one focuses on the representation of ignorance. From this perspective, a natural rephrasing of $(LO_{K})$ in terms of ignorance is as follows:

\begin{itemize}

\item[$(LO_{I})$] If an agent is not ignorant of $\phi$, she is not ignorant of all the consequences of $\phi$.

\end{itemize}

In order to analyze $(LO_{I})$ formally, we will consider three of its forms: closure under implication $(LO^{\rightarrow}_{I})$, omniscience rule $(LO^{NEC}_{I})$, and closure under equivalence $(LO^{RE}_{I})$.\footnote{For the sake of generality, we formulate these principles with $I$, which can be instantiated with $I^{w}$, $I^{u}$, and $I^{d}$.}

\begin{itemize}

\item[$(LO^{\rightarrow}_{I}$)] if $\models \phi \rightarrow \psi$, then $\models \neg I \phi \rightarrow \neg I \psi$;

\item[$(LO^{NEC}_{I}$)] if $\models \phi$, then $\models \neg I \phi$;

\item[$(LO^{RE}_{I}$)] if $\models \phi \leftrightarrow \psi$, then $\models \neg I \phi \leftrightarrow \neg I \psi$.

\end{itemize}

Let us first consider these principles in the systems \texttt{IW}, \texttt{IU}, and \texttt{DI} interpreted on standard Kripke frames.

\begin{prop}

$(LO^{\rightarrow}_{I})$ is not valid in \texttt{IW}, \texttt{IU}, and \texttt{DI}.

\end{prop}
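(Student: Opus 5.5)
The plan is to refute $(LO^{\rightarrow}_{I})$ directly in each of the three systems. For each of $I^{w}$, $I^{u}$ and $I^{d}$, I will exhibit a validity $\models \phi \rightarrow \psi$ together with a Kripke model and a world $w$ at which $\neg I\phi$ holds but $I\psi$ holds, so that $\neg I \phi \rightarrow \neg I \psi$ is not valid.

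To stay clear of degenerate cases, $\phi$ should not be a contradiction. I will use the same instance in all three cases: $\phi := p \wedge q$ and $\psi := p$, which trivially gives $\models (p \wedge q) \rightarrow p$. The idea is that making $\phi$ false where $\psi$ varies (or is true) defeats ignorance of $\phi$ while keeping ignorance of $\psi$.

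\emph{Case \texttt{IW}.} Take $W=\{w,w_1,w_2\}$ and $R=\{(w,w_1),(w,w_2)\}$, with $v(p)=\{w_1\}$ and $v(q)=\emptyset$.
\begin{itemize}
\item $p\wedge q$ is false at every $R$-successor of $w$, so by Definition~\ref{defSemIwhether} no successor verifies it, and $\mathcal{M},w\models \neg I^{w}(p\wedge q)$.
\item $p$ holds at $w_1$ and fails at $w_2$, so $\mathcal{M},w\models I^{w}p$.
\end{itemize}

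\emph{Case \texttt{IU}.} Take $W=\{w,w'\}$ and $R=\{(w,w')\}$, with $v(p)=\{w\}$ and $v(q)=\emptyset$.
\begin{itemize}
\item $p\wedge q$ is false at $w$, so the truth conjunct of the clause for $I^{u}$ fails, and $\neg I^{u}(p\wedge q)$ holds at $w$.
\item $p$ is true at $w$ and false at the successor $w'$, so $I^{u}p$ holds at $w$.
\end{itemize}

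\emph{Case \texttt{DI}.} The same model works.
\begin{itemize}
\item $p$ is true at $w$, and the only accessible world other than $w$ is $w'$, where $\neg p$ holds. Hence $I^{d}p$ holds at $w$.
\item $I^{d}(p\wedge q)$ fails at $w$ by factivity, since $p\wedge q$ is false at $w$.
\end{itemize}

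In each case the conditional $\neg I(p\wedge q)\rightarrow \neg I p$ is false at $w$. Since $\phi\rightarrow\psi$ is valid, this refutes $(LO^{\rightarrow}_{I})$.

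There is no real obstacle here. The only point needing care is checking each semantic clause exactly as stated in the earlier definitions. In particular, for $I^{d}$ the quantifier ranges over successors distinct from $w$, which is why $R$ is taken irreflexive at $w$ in the \texttt{DI} model.
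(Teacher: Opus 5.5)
Your proposal is correct and follows essentially the same route as the paper: an explicit finite Kripke countermodel where the antecedent is false wherever the semantic clause looks (so $\neg I\phi$ holds trivially), while the consequent takes both truth values there. The differences are cosmetic. The paper uses the instance $(p\wedge\neg p)\rightarrow q$, and avoiding contradictions is unnecessary, since your $p\wedge q$ is false everywhere in your models and behaves like one. It also uses a single model $W=\{w,w'\}$, $R=\{(w,w),(w,w')\}$, $v(q)=\{w\}$ for all three operators: the loop at $w$ supplies the second successor needed for $I^{w}$ and is simply ignored by $I^{d}$, so irreflexivity at $w$ is not actually required for \texttt{DI}.
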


\begin{proof}

We prove this proposition by constructing a countermodel.

Let $\mathcal{M} \models \phi \rightarrow \psi$ for any $\mathcal{M}$ (i.e. $\phi \rightarrow \psi$ is valid). We need to construct a model $\mathcal{M}'$ s.t. $\mathcal{M'}, w \not \models I \phi \rightarrow I \psi$, where $I$ stands for $I^{w}$, $I^{u}$, or $I^{d}$.

Let $\mathcal{M}' = (W, R, v)$, where $W = \{w, w'\}$, $R = \{(w,w'), (w,w)\}$, $v(p) = \{ \}$, and $v(q) = \{w\}$.

Clearly,  for and $\mathcal{M}$ we have: $\mathcal{M} \models (p \wedge \neg p) \rightarrow q$.

It is easy to check that $\mathcal{M}', w \models \neg I^{w} (p \wedge \neg p)$ and $\mathcal{M}, w \not \models \neg I^{w} q$, i.e., $\mathcal{M} \not \models \neg I^{w} (p \wedge \neg p) \rightarrow \neg I^{w} q$. Similarly, it is easy to check that $\mathcal{M} \not \models \neg I^{u} (p \wedge \neg p) \rightarrow \neg I^{u} q$ and $\mathcal{M} \not \models \neg I^{d} (p \wedge \neg p) \rightarrow \neg I^{d} q$.

\end{proof}

\begin{prop}

$(LO^{NEC}_{I})$ is valid in \texttt{IW}, \texttt{IU}.

\end{prop}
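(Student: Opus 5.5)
The argument is a direct semantic check on standard Kripke models, using the clauses for $I^{w}$ and $I^{u}$ from Section \ref{ReprSec}. No topic component is involved.

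We assume $\models \phi$, so $\phi$ is true at every world of every Kripke model. Fix an arbitrary model $\mathcal{M} = (W, R, v)$ and a world $w \in W$. We must show $\mathcal{M}, w \models \neg I^{w} \phi$ and $\mathcal{M}, w \models \neg I^{u} \phi$.

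For \texttt{IW}, suppose toward a contradiction that $\mathcal{M}, w \models I^{w} \phi$. By clause 4 of Definition \ref{defSemIwhether}, there is some $w''$ with $Rww''$ and $\mathcal{M}, w'' \models \neg \phi$. This contradicts the validity of $\phi$. So $\mathcal{M}, w \models \neg I^{w}\phi$. Since $\mathcal{M}$ and $w$ were arbitrary, $\models \neg I^{w} \phi$.

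For \texttt{IU}, the argument has the same shape. $\mathcal{M}, w \models I^{u}\phi$ would require some $w'$ with $Rww'$ and $\mathcal{M}, w' \models \neg\phi$, which is again impossible. Hence $\models \neg I^{u}\phi$.

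As a syntactic cross-check for \texttt{IW}, one can note that the rule (NEC$_{IW}$) gives exactly this principle. Soundness of \texttt{IW} with respect to Kripke models, established in \citep{Fan2015}, then yields the semantic version. For \texttt{IU}, the same conclusion follows from (A1$_{IU}$) with (R$_{IU}$), although the semantic argument above is simpler.

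There is no real obstacle here. The only care needed is to read $(LO^{NEC}_{I})$ as quantifying over all Kripke models, so that validity of $\phi$ covers every $R$-successor of $w$. The key point is that both $I^{w}$ and $I^{u}$ demand a successor world falsifying $\phi$, and a valid $\phi$ has no such world. This is also why $I^{d}$ is omitted from the statement: its clause asks for $\phi$ to be false at successors, which is a different condition.
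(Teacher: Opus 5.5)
Your semantic argument is correct and is essentially the paper's proof: a valid $\phi$ has no $R$-successor falsifying it, yet both $I^{w}\phi$ and $I^{u}\phi$ require such a successor. One small slip in your side remark: to pass from $\models\phi$ to $\vdash\phi$ before applying (NEC$_{IW}$) or (R$_{IU}$), you would need completeness of \texttt{IW} (resp.\ \texttt{IU}) as well as soundness, though your main argument does not rely on this.
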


\begin{proof}

For the case of  \texttt{IW}, let (i) $\mathcal{M} \models \phi$ for an arbitrary $\mathcal{M}$ (i.e., $\phi$ is valid), and for some $w$ (ii) $\mathcal{M}, w \not \models \neg I^{w} \phi$. From (ii), we have that there exists $w'$ s.t. $Rww'$ and $\mathcal{M}, w' \models \neg \phi$, which contradicts (i). The case of \texttt{IU} is similar.

\end{proof}

\begin{prop}

$(LO^{NEC}_{I})$ is not valid in \texttt{DI}.

\end{prop}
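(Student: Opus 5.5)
We need a valid $\phi$ and a pointed Kripke model $(\mathcal{M}, w)$ with $\mathcal{M}, w \models I^{d}\phi$, so that $\neg I^{d}\phi$ fails at $w$ and $(LO^{NEC}_{I})$ is refuted for \texttt{DI}. The idea is that the clause for $I^{d}$ only quantifies over accessible worlds \emph{distinct from} $w$. So take $\phi := p \vee \neg p$, which holds at every world of every model, and choose $w$ so that it has no $R$-successors other than itself. Then the universal condition in the clause is vacuously satisfied.

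Concretely, let $\mathcal{M} = (W, R, v)$ with $W = \{w\}$, $R = \{(w,w)\}$ (or $R = \emptyset$; either works), and $v(p) = \{\}$. The verification has three steps:
\begin{enumerate}
\item Since $p \vee \neg p$ is a classical tautology, it is valid, so $\models \phi$ holds.
\item $\mathcal{M}, w \models p \vee \neg p$.
\item For every $w'$ with $w' \neq w$ and $Rww'$, $\mathcal{M}, w' \models \neg(p \vee \neg p)$. This holds vacuously, because no such $w'$ exists.
\end{enumerate}
By the definition of $I^{d}$, steps 2 and 3 give $\mathcal{M}, w \models I^{d}(p \vee \neg p)$, so $\mathcal{M}, w \not\models \neg I^{d}(p \vee \neg p)$. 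Hence $\not\models \neg I^{d}\phi$, although $\models \phi$.

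There is no real obstacle here. The one point to stress is why the vacuity trick works: the semantics of $I^{d}$ excludes $w$ itself from the universal quantification. This contrasts with the proof for \texttt{IW} and \texttt{IU} in the previous proposition, where ignorance needs the \emph{existence} of an accessible $\neg\phi$-world, which a valid $\phi$ rules out.

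If one prefers a model in which $w$ has proper successors, the same argument still applies. Taking a successor $w' \neq w$ would make step 3 fail, since $w' \models p \vee \neg p$. So every counterexample must have $w$ with no successors other than itself, and the one-world model above is the canonical choice.
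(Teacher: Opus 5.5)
Your proof is correct and essentially matches the paper's. The paper also takes the one-world model ($W=\{w\}$, $R=\emptyset$, $v(p)=\{w\}$) and the tautology $p\vee\neg p$, relying on the same vacuous satisfaction of the $I^{d}$ clause; your choices $v(p)=\emptyset$ or $R=\{(w,w)\}$ make no difference because the clause excludes $w$ itself. One wording slip: your closing claim that ``the same argument still applies'' with proper successors contradicts what you then correctly observe, namely that any counterexample with a valid $\phi$ must give $w$ no $R$-successors other than $w$ itself.
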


\begin{proof}

We prove this proposition by constructing a countermodel.

Let $\mathcal{M} \models \phi $ for any $\mathcal{M}$ (i.e. $\phi$ is valid). We need to construct a model $\mathcal{M}'$ s.t. $\mathcal{M}', w \not \models \neg I^{d}\phi$.

Let $\mathcal{M}' = (W, R, v)$, where $W = \{w\}$, $R = \{\}$, $v(p) = \{ w\}$. 

It is clear that for any $\mathcal{M}$, it holds that $\mathcal{M} \models p \vee \neg p$, however $\mathcal{M}' \not \models \neg I^{d} (p \vee \neg p)$.

\end{proof}

\begin{prop}

$(LO^{RE}_{I})$ is valid in \texttt{IW}, \texttt{IU}, and \texttt{DI}.

\end{prop}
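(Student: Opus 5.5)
The plan is to argue semantically, one operator at a time, using a single observation: in each of the three Kripke semantics, the truth value of $I\phi$ at a world $w$ depends only on the \emph{truth set} $\|\phi\|^{\mathcal{M}} = \{u \in W : \mathcal{M}, u \models \phi\}$, and not on the syntax of $\phi$. So suppose $\models \phi \leftrightarrow \psi$, i.e.\ $\phi \leftrightarrow \psi$ holds at every world of every Kripke model. Fix an arbitrary model $\mathcal{M} = (W, R, v)$ and a world $w$. Then $\|\phi\|^{\mathcal{M}} = \|\psi\|^{\mathcal{M}}$, and therefore also $\|\neg\phi\|^{\mathcal{M}} = \|\neg\psi\|^{\mathcal{M}}$. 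It then suffices to show $\mathcal{M}, w \models I\phi$ iff $\mathcal{M}, w \models I\psi$ for each $I \in \{I^{w}, I^{u}, I^{d}\}$. The biconditional $\neg I\phi \leftrightarrow \neg I\psi$ then follows at $w$ by the classical clause for negation, and hence holds in every model.

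Next I check each clause in turn.

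For $I^{w}$ (clause 4 of Definition \ref{defSemIwhether}), $\mathcal{M}, w \models I^{w}\phi$ holds iff there are $w', w''$ with $Rww'$, $Rww''$, $w' \in \|\phi\|$ and $w'' \in \|\neg\phi\|$. Replacing $\|\phi\|$ by $\|\psi\|$ and $\|\neg\phi\|$ by $\|\neg\psi\|$ yields exactly the clause for $I^{w}\psi$.

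For $I^{u}$, the condition is $w \in \|\phi\|$ together with the existence of some $w'$ with $Rww'$ and $w' \in \|\neg\phi\|$. Again this transfers verbatim.

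For $I^{d}$, the condition is $w \in \|\phi\|$ together with: every $w' \neq w$ with $Rww'$ lies in $\|\neg\phi\|$. Once more, only the truth sets of $\phi$ and $\neg\phi$ are involved.

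There is no real obstacle here. The only point requiring care is that the equivalence hypothesis is \emph{global validity}. That is what licenses equating truth sets in the particular model $\mathcal{M}$, including at worlds other than $w$ that $R$ reaches. Mere truth of $\phi \leftrightarrow \psi$ at $w$ would not suffice, because each clause quantifies over $R$-successors. I would also note, as a cross-check, that for \texttt{IW} the result is just the semantic counterpart of the rule (RE$_{IW}$), whose soundness is part of the completeness result of \citet{Fan2015}. For \texttt{IU} and \texttt{DI} it likewise follows from soundness together with the derived replacement of equivalents via (R$_{IU}$) and (IR$_{DI}$). The direct semantic argument is shorter and uniform across the three systems, so that is the one I would present.
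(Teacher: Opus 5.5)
Your proof is correct and takes essentially the same route as the paper: both argue semantically that global validity of $\phi \leftrightarrow \psi$ makes $\phi$ and $\psi$ true at exactly the same worlds of any model, so the truth clause for the ignorance operator at $w$ carries over from $\phi$ to $\psi$. The only difference is presentation: the paper spells out just the $I^{w}$ case and calls the others similar, while your truth-set formulation treats $I^{w}$, $I^{u}$, and $I^{d}$ uniformly and explicitly.
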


\begin{proof}

Let us consider only the case of \texttt{IW}, the other two cases are similar. 

Let (i) $\mathcal{M} \models \phi \leftrightarrow \psi$ for all $\mathcal{M}$ (i.e., $\phi \rightarrow \psi$ is valid) and (ii) $\mathcal{M}, w \models \neg I^{w} \phi$ for some $w$. The case (ii) is equivalent to say that either for all $w'$, if $Rww'$ then $\mathcal{M}, w' \models \phi$, or for all $w'$, if $Rww'$ then $\mathcal{M}, w' \models \neg \phi$. By (ii) this is equivalent to the fact that either for all $w'$, if $Rww'$ then $\mathcal{M}, w' \models \psi$, or for all $w'$, if $Rww'$ then $\mathcal{M}, w' \models \neg \psi$, that is $\mathcal{M}, w \models \neg I^{w} \psi$. Thus, $w$ being arbitrary world of an arbitrary model, we have $\mathcal{M} \models \neg I^{w} \phi \leftrightarrow \neg I^{w} \psi$ for all $\mathcal{M}$.

\end{proof}

Interestingly, $(LO^{\rightarrow}_{I})$ does not hold in \texttt{IW}, \texttt{IU}, and \texttt{DI}, and $(LO^{NEC}_{I})$ does not hold in \texttt{DI}. This means that standard Kripke semantics already provide several conditions for avoiding some forms of logical omniscience once it is expressed in terms of ignorance. However, this does not depend on the hyperintensional character of ignorance, but on the specific definition of each operator. We will now show that a hyperintensional setting permits us to avoid all three forms of logical omniscience.

\begin{prop}

$(LO^{\rightarrow}_{I})$, $(LO^{NEC}_{I})$, and $(LO^{RE}_{I})$ are not valid in \texttt{HIW}, \texttt{HIU}, and \texttt{HDI}.

\end{prop}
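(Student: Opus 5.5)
The plan is to refute each principle in each system with an explicit finite topic-sensitive countermodel. Nine cases arise, but one construction pattern covers almost all of them. The hyperintensional lever is always the same: take a valid formula (or a valid implication or equivalence) whose right-hand side contains a propositional variable $q$ with $t(q)\not\sqsubseteq\mathfrak{K}$. For \texttt{HIW} and \texttt{HIU}, failing to grasp a topic suffices for ignorance. For \texttt{HDI}, failing to grasp a topic suffices for \emph{non}-ignorance, so the direction of the counterexample has to be arranged accordingly.

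\textbf{Base model.} I will reuse the model from the William III example: $T=\{a,b\}$, $b\sqsubset a$, $t(p)=b=\mathfrak{K}$, $t(q)=a$. For \texttt{HIW} and \texttt{HIU}, use $W=\{w\}$, $R=\{(w,w)\}$ and $v(p)=\{w\}$.

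\textbf{\texttt{HIW} and \texttt{HIU}.}
\begin{itemize}
\item $(LO^{NEC}_I)$: $q\vee\neg q$ is valid, but $w\models I(q\vee\neg q)$ because $t(q)\not\sqsubseteq\mathfrak{K}$. This holds for both $I^w$ and $I^u$.
\item $(LO^{\rightarrow}_I)$: take the valid implication $p\rightarrow p\wedge(q\vee\neg q)$. We have $w\models\neg I p$: the topic of $p$ is grasped, and at the sole successor $p$ holds, so there is no witness for $\neg p$. But $w\models I(p\wedge(q\vee\neg q))$ by the topic clause.
\item $(LO^{RE}_I)$: the same pair refutes it, since $p\leftrightarrow p\wedge(q\vee\neg q)$ is valid.
\end{itemize}

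\textbf{\texttt{HDI}.} Use the paper's \texttt{HDI} model: $W=\{w,w'\}$, $R=\{(w,w')\}$, $v(p)=v(q)=\{w\}$, with the same topics.
\begin{itemize}
\item $(LO^{\rightarrow}_I)$: the right direction is $p\wedge(q\vee\neg q)\rightarrow p$. At $w$, $\neg I^d(p\wedge(q\vee\neg q))$ holds because the topic is not grasped. However, $I^d p$ holds, as already checked in the text, so $\neg I^d p$ fails.
\item $(LO^{RE}_I)$: the same model suffices, because $p\leftrightarrow p\wedge(q\vee\neg q)$ is valid.
\item $(LO^{NEC}_I)$: we need a valid $\phi$ with $I^d\phi$ true somewhere. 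Take $\phi=p\vee\neg p$ in a one-world model with $R=\emptyset$ and $t(p)\sqsubseteq\mathfrak{K}$. Then $I^d\phi$ holds: $\phi$ is true, its topic is grasped, and the universal condition over successors other than $w$ is vacuous. This mirrors the earlier \texttt{DI} countermodel, with topic grasping added.
\end{itemize}

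\textbf{Main obstacle.} I expect the main difficulty to be bookkeeping rather than ideas. First, the failure of $(LO^{\rightarrow}_I)$ must be exhibited with the implication oriented correctly; since the principle is about $\neg I$, for \texttt{HDI} it must be the converse of the orientation used for \texttt{HIW}/\texttt{HIU}. Second, in the \texttt{HIU} case, verifying $\neg I^u p$ requires checking that no successor falsifies $p$; the reflexive one-world model ensures this. Finally, I will note that topic-transparency gives $t(p\wedge(q\vee\neg q))=a\not\sqsubseteq\mathfrak{K}$, which underlies every topic-based step.
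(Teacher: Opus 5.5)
Your countermodels all check out. For $(LO^{NEC}_{I})$ you do what the paper does: an ungrasped tautology for \texttt{HIW} and \texttt{HIU}, and the \texttt{DI} one-world model with $R=\emptyset$ (topics grasped) for \texttt{HDI}.

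The genuine difference is $(LO^{\rightarrow}_{I})$. The paper inherits its failure from the intensional systems. It reuses the countermodel for $\models (p\wedge\neg p)\rightarrow q$, in which $\neg I(p\wedge\neg p)$ and $I q$ both hold at $w$, tacitly reading the Kripke model as a topic-sensitive one in which every topic is grasped. You instead get it as a by-product of your $(LO^{RE}_{I})$ counterexample. Once $\neg I\phi\leftrightarrow\neg I\psi$ fails for a valid biconditional, one of the two valid implications must fail too, and you pick the right orientation for each system.

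The paper's route shows that closure under implication already fails for reasons unrelated to topics. Yours is self-contained, since it avoids the unstated embedding. It also shows that topicality alone blocks closure even where the intensional systems validate it: in \texttt{IW}, \texttt{IU} and \texttt{DI}, the validity of $(LO^{RE}_{I})$ makes $\neg I p\rightarrow\neg I(p\wedge(q\vee\neg q))$ valid.

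For $(LO^{RE}_{I})$ itself, the paper uses $(p\vee\neg p)\leftrightarrow(q\vee\neg q)$ in a single one-world model with $R=\emptyset$ for all three systems. Its \texttt{HDI} case therefore rests on $I^{d}$ of a tautology holding vacuously. Yours uses a contingent $p$ with a genuinely disbelieving successor, which is closer to the intended reading of $I^{d}$.
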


\begin{proof}

The case of $(LO^{\rightarrow}_{I})$ is straightforward from the corresponding results for \texttt{IW}, \texttt{IU}, and \texttt{DI}.

The case of $(LO^{NEC}_{I})$ in \texttt{HDI} is straightforward from the result for \texttt{DI}. 

The case of \texttt{HIW} and \texttt{HIU} is as follows. 

Let $\mathcal{M} \models \phi$ for all $\mathcal{M}$ (i.e., $\phi$ is valid). We need to construct a $\mathcal{M}'$ s.t. $\mathcal{M}', w \not \models \neg I \phi$, where $I$ stands for $I^{w}$ or $I^{u}$.

Let $\mathcal{M}' = (W, R, v, \mathcal{T})$, where $W = \{w\}$, $R = \{\}$, $v(p) = \{w\}$, and $\mathcal{T} = (T, \oplus, t, \mathfrak{K})$, where $T =\{a, b\}$, $b \sqsubset a$, $t(\phi) = a$, and $b = \mathfrak{K}$. It is clear that for any $\mathcal{M}$ we have $\mathcal{M} \models p \vee \neg p$. However, $\mathcal{M}', w \models I^{w} (p \vee \neg p)$ and $\mathcal{M}' \models I^{u} (p \vee \neg p)$, because $\mathfrak{K} \sqsubset t(p) = a$, that is $t(p) \not \sqsubseteq \mathfrak{K}$. Thus, $\mathcal{M}' \not \models \neg I^{w} (p \vee \neg p)$ and $\mathcal{M}' \not \models \neg I^{u} (p \vee \neg p)$.

The case of $(LO^{RE}_{I})$  is as follows. 

Let $\mathcal{M} \models \phi \leftrightarrow \psi$ for all $\mathcal{M}$ (i.e., $\phi \rightarrow \psi$ is valid). We construct a $\mathcal{M}'$ s.t. $\mathcal{M}', w \not \models \neg \phi \leftrightarrow \neg I \psi$, where $I$ stands for $I^{w}$, $I^{u}$, or $I^{d}$.

Let $\mathcal{M}' = (W, R, v, \mathcal{T})$, where $W = \{w\}$, $R = \{\}$, $v(p) = v(q) = \{w\}$, and $\mathcal{T} = (T, \oplus, t, \mathfrak{K})$, where $T =\{a, b\}$, $b \sqsubset a$, $t(\psi) = a$, and $t(\phi) = b = \mathfrak{K}$. It is clear that for any $\mathcal{M}$ it holds that $\mathcal{M} \models (p \vee \neg p) \leftrightarrow (q \vee \neg q)$. However, we have $\mathcal{M}', w \models \neg I^{w} (p \vee \neg p)$, since $\mathcal{M}' \models p \vee \neg p$ and  $t(p) \sqsubseteq \mathfrak{K}$, and $\mathcal{M}', w \models I^{w} (q \vee \neg q)$, since  $t(q) \not \sqsubseteq \mathfrak{K}$. Thus, $\mathcal{M}' \not \models \neg I^{w} (p \vee \neg p) \leftrightarrow \neg I^{w} (q \vee \neg q)$. The case of $I^{u}$ is the same. In case of $I^{d}$, we can easily check that $\mathcal{M}', w \models I^{d} (p \vee \neg p)$ and $\mathcal{M}', w \models \neg I^{d} (q \vee \neg q)$. Thus, $\mathcal{M}' \not \models \neg I^{d} (p \vee \neg p) \leftrightarrow \neg I^{d} (q \vee \neg q)$.

\end{proof}

The invalidity of the principles that lead to logical omniscience, as considered in this section, relies on the internal structure of the topic-sensitive models. Although the outcomes presented here may not be surprising, they highlight that the three formal frameworks for representing hyperintensional ignorance are well-behaved and align with natural expectations of ignorance as a hyperintensional notion.

\section{Conclusion}
\label{Conclusion}

In this work, we considered ignorance as a hyperintensional notion by supplementing the truth conditions in standard Kripke semantics with an account of topicality. Since there exist several types of ignorance, our starting intuition was that the specific form of ignorance one is willing to analyze might be sensitive to the topic under consideration in a given context. This led us to employ a topic-sensitive semantics to represent various forms of ignorance. This semantics is naturally hyperintensional, enabling the differentiation of logically or necessarily equivalent propositions. As a consequence, we were able to provide three sound and complete systems, each capturing a distinct sense in which an agent might be ignorant. Lastly, we reframed the well-known problem of logical omniscience in terms of ignorance and showed that certain forms of omniscience can be avoided within these settings.

A natural development of this line of research involves the exploration of additional forms of ignorance where topicality plays a crucial role in formalization. For instance, one can define \textit{complete ignorance whether}. As highlighted in the introduction, complete ignorance occurs when an agent fails to grasp the content of a proposition. From this perspective, we can characterize this kind of ignorance by  incorporating the necessary condition of the absence of grasping into the definition of ignorance whether. Such formalization would result in a distinct system compared to \texttt{HIW}. Similarly, alternative definitions involving grasping for $I^{u}$ and $I^{d}$ can be proposed, and we leave this exploration for future investigations.

\appendix

\section{Proofs for system \texttt{HIW}}

\subsection{Soundness of \texttt{HIW}}
\label{appHIWsound}

\textbf{Theorem \ref{theorHIWsound}.} The system $\mathtt{HIW}$ is sound.

\begin{proof}

The cases of (CPL) and (S5$_{\Box}$) are standard.

The case of ($I^{w} \leftrightarrow$) is as follows. Given $\mathcal{M} = (W, R, v, \mathcal{T})$, $\mathcal{M}, w \models I^{w} \phi$ for any $w \in \mathcal{M}$ iff $t(\phi) \not \sqsubseteq \mathfrak{K}$ or (there exist $w', w'' \in W$ such that $Rww'$, $Rww''$, $\mathcal{M}, w' \models \phi$, and $\mathcal{M}, w'' \models \neg \phi$) iff $\mathcal{M}, w \models I^{w} \neg \phi$.

For the case of (A1$_{I^{w}}$), let  $\mathcal{M}, w \models \neg I^{w}\phi \wedge \neg I^{w} \bar{\psi} \wedge \neg I^{w} \bar{\chi}$  for an arbitrary $w \in \mathcal{M}$. This means that (i) $t(\phi) \sqsubseteq \mathfrak{K}$, (ii) $t(\psi) \sqsubseteq \mathfrak{K}$, (iii) $t(\chi) \sqsubseteq \mathfrak{K}$ and either (iv) $\mathcal{M}, w' \models \phi$  for all $w'$ s.t. $Rww'$, or (v) $\mathcal{M}, w' \not \models \phi$ for all $w'$ s.t. $Rww'$. If (iv), then it is clear that (vi) $\mathcal{M}, w' \models \neg \phi \rightarrow \chi$  for all $w'$ s.t. $Rww'$. From (i), (iii), and (vi), we have (vii) $\mathcal{M}, w \models \neg I^{w}(\neg \phi \rightarrow \chi)$. If (v), then it is clear that (viii) $\mathcal{M}, w' \models \phi \rightarrow \psi$  for all $w'$ s.t. $Rww'$. From (i), (ii), and (viii), we have (ix) $\mathcal{M}, w \models \neg I^{w}(\phi \rightarrow \psi)$. Then, $\mathcal{M}, w \models \neg I^{w}(\phi \rightarrow \psi) \vee \neg I^{w}(\neg \phi \rightarrow \chi)$.

For the case of (A2$_{I^{u}}$), let $\mathcal{M}, w \models \neg I^{w} \bar{\phi}$, which means that $t(\phi) \sqsubseteq \mathfrak{K}$. Clearly, $\mathcal{M}, w' \models \phi \vee \neg \phi$ for any $w' \in \mathcal{M}$, and thus $\mathcal{M}, w \models \neg I^{w} (\phi \vee \neg \phi)$.

For the case of (A3$_{I^{w}}$), let $\mathcal{M}, w \models \neg I^{w} \phi \wedge \neg I^{w} \psi$ for an arbitrary $w \in \mathcal{M}$. This means that (i) $t(\phi) \sqsubseteq \mathfrak{K}$, (ii) $t(\psi) \sqsubseteq \mathfrak{K}$, and either (iii) $\mathcal{M}, w' \models \phi$ and $\mathcal{M}, w' \models \psi$ for all $w'$ s.t. $Rww'$, or (iv) $\mathcal{M}, w' \not \models \phi$ for all $w'$ s.t. $Rww'$ or $\mathcal{M}, w' \not \models \psi$ for all $w'$ s.t. $Rww'$. From (i) and (ii) we have (v) $t(\phi \wedge \psi) \sqsubseteq \mathfrak{K}$. Let (iii) be the case. Then, from (iii) and (v), we have $\mathcal{M}, w \models \neg I^{w} (\phi \wedge \psi)$. Let (iv) be the case. Then, for all $w'$ s.t., $Rww'$, we have $\mathcal{M}, w' \not \models \phi \wedge \psi$, and thus, taking into account (v), $\mathcal{M}, w \models \neg I^{w} (\phi \wedge \psi)$.

For the case of (A4$_{I^{w}}$), let (i) $\mathcal{M}, w\models \neg I^{w} \bar{\phi}$ for an arbitrary $w \in \mathcal{M}$. Assume that (ii) $\mathcal{M}, w \not \models \Box \neg I^{w} \bar{\phi}$. From (i) we have (iii) $t(\phi) \sqsubseteq \mathfrak{K}$. From (ii) we have that (iv) there exists $w' \in \mathcal{M}$ s.t. $\mathcal{M}, w' \models I \bar{\phi}$. From (iii), (iv) and the definition of $I^{w}$, we get that there exist $w'$ s.t. $\mathcal{M}, w' \models \neg \bar{\phi}$, which is a contradiction by construction of $\bar{\phi}$.

For the case of (A5$_{I^{w}}$), let $\mathcal{M}, w \models I^{w} \bar{\phi}$ for an arbitrary $w \in \mathcal{M}$. This means that $t(\phi) \not \sqsubseteq \mathfrak{K}$, which provides us with $\mathcal{M}, w \models I^{w} \phi$.

For the case of (A6$_{I^{w}}$), let (i) $\mathcal{M}, w \models \neg I^{w} (\phi \rightarrow \neg \psi) \wedge \neg I^{w} \phi \wedge \neg I^{w} (\psi \rightarrow \phi)$ for an arbitrary $w \in \mathcal{M}$. From  $\mathcal{M}, w \models \neg I^{w} (\phi \rightarrow \neg \psi)$ we have that $\mathcal{M}, w \models \neg I^{w} \bar{\phi}$ and $\mathcal{M}, w \models \neg I^{w} \bar{\psi}$, that is (ii) $t(\phi) \sqsubseteq \mathfrak{K}$ and (iii) $t(\psi) \sqsubseteq \mathfrak{K}$. From (i), (ii), (iii), we have (iv) either $\mathcal{M}, w' \models \phi \rightarrow \neg \psi$ for all $w'$ such that $Rww'$ or $\mathcal{M}, w' \not \models \phi \rightarrow \neg \psi$ for all $w'$ such that $Rww'$ for all $w'$ such that $Rww'$, and (v) either $\mathcal{M}, w' \models \phi$ for all $w'$ such that $Rww'$ or $\mathcal{M}, w' \not \models \phi$ for all $w'$ such that $Rww'$, and (vi) either $\mathcal{M}, w' \models \psi \rightarrow \phi$ for all $w'$ such that $Rww'$ or $\mathcal{M}, w' \not \models \psi \rightarrow \phi$ for all $w'$ such that $Rww'$. Assume that (vii) $\mathcal{M}, w \models I^{w} \psi$. From (vii) and (iii), this means that there exist $w''$ and $w'''$ s.t. $Rww''$, $Rww'''$, (viii) $\mathcal{M}, w'' \models \psi$, and (ix) $\mathcal{M}, w''' \models \neg \psi$. From (ix), (v) and (vi), we have that for all $w'$, s.t. $Rww'$, it holds that (x) $\mathcal{M}, w' \models \phi$. From (viii),(v) and (iv), we have that for all $w'$ s.t. $Rww'$ it holds that $\mathcal{M}, w' \models \neg \phi$, which contradicts (x). Thus, $\mathcal{M}, w \models \neg I^{w} \psi$.

For the case of (A7$_{I^{w}}$), let $\mathcal{M}, w \models \neg I^{w} \bar{\phi}$ for an arbitrary $w \in \mathcal{M}$, which means that $t(\phi) \sqsubseteq \mathfrak{K}$. By definition of $\bar{\phi}$ and of $t$, this means that $t(\psi) \sqsubseteq \mathfrak{K}$ for any $\psi$ s.t. $Var(\psi) \subseteq Var (\phi) $, that is $\mathcal{M}, w \models \neg I^{w} \bar{\psi}$, if $Var(\psi) \subseteq Var(\phi)$.

For the case of (NEC$_{I^{w}}$), assume that (i) $\mathcal{M} \models \phi$ for an arbitrary $\mathcal{M}$ (i.e., $\phi$ is valid) and let (ii) $\mathcal{M}, w \models \neg I^{w} \bar{\phi}$ for an arbitrary $w \in \mathcal{M}$. From (ii) we have (iii) $t(\phi) \sqsubseteq \mathfrak{K}$. From (i) we have $\mathcal{M}, w \models \phi$ and for all $w'$ if $Rww'$ then $\mathcal{M}, w' \models \phi$. This means, taking into account (ii), that $\mathcal{M}, w \models \neg I^{w} \phi$. 

For the case of (RE$_{I^{w}}$), assume that (i) $\mathcal{M} \models \phi \leftrightarrow \psi$ for an arbitrary $\mathcal{M}$ and let (ii) $\mathcal{M}, w \models \neg I^{w} \bar{\phi} \wedge \neg I^{w} \bar{\psi}$ for an arbitrary $w \in \mathcal{M}$. From (ii) we have (iii) $t(\phi) \sqsubseteq \mathfrak{K}$ and (iv) $t(\psi) \sqsubseteq \mathfrak{K}$. Let (v) $\mathcal{M}, w \models \neg I^{w} \phi$. From (v) and (iii), we have  for all $w'$ s.t. $Rww'$ either (vi) $\mathcal{M}, w' \models \phi$ or (vii) $\mathcal{M}, w' \models \neg \phi$. If (vi), then from (i) we have that for all $w'$ s.t. $Rww'$, we have $\mathcal{M}, w' \models \psi$, that is (taking into account (iv)), $\mathcal{M}, w \models \neg I^{w} \psi$. The reasoning for (vii), and the backwards direction is the same. Thus, $\mathcal{M}, w \models \neg I^{w} \phi \leftrightarrow \neg I^{w} \psi$.

\end{proof}

\subsection{Completeness of \texttt{HIW}}
\label{appHIWcomplete}

For the completeness proof, we also need the following proposition which is a generalization of (A6$_{I^{w}}$), and is an analogue of Proposition 4.4 in \citep{Fan2015}. However, notice that its proof is different from the one of Proposition 4.4.

\begin{prop}
\label{genA6IW}

For all $k \geq 1:$

\begin{center}

$\vdash (\neg I^{w} (\bigwedge^{k}_{j=1} \phi_{j} \rightarrow \neg \psi) \wedge \bigwedge^{k}_{j=1} \neg I^{w} \phi_{j} \wedge \bigwedge^{k}_{j=1} \neg I^{w}(\psi \rightarrow \phi_{j})) \rightarrow \neg I^{w} \psi$

\end{center}

\end{prop}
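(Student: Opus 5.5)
The proof goes by induction on $k$. The base case $k=1$ is exactly axiom (A6$_{I^{w}}$). For the inductive step, set $\Phi_k := \bigwedge^{k}_{j=1}\phi_j$ and assume the statement for $k$. We must derive $\neg I^{w}\psi$ from three hypotheses:
\[
\neg I^{w}(\Phi_k\wedge\phi_{k+1}\rightarrow\neg\psi), \qquad \bigwedge^{k+1}_{j=1}\neg I^{w}\phi_j, \qquad \bigwedge^{k+1}_{j=1}\neg I^{w}(\psi\rightarrow\phi_j).
\]
The strategy is to reduce this to one application of (A6$_{I^{w}}$) with $\phi := \Phi_{k+1}$. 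For that we need three facts. First, $\neg I^{w}\Phi_{k+1}$, which follows from the $k+1$ conjuncts $\neg I^{w}\phi_j$ by repeated use of (A3$_{I^{w}}$). Second, $\neg I^{w}(\Phi_{k+1}\rightarrow\neg\psi)$, which is given. Third, $\neg I^{w}(\psi\rightarrow\Phi_{k+1})$, which is where the work lies.

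To obtain $\neg I^{w}(\psi\rightarrow\Phi_{k+1})$ from the conjuncts $\neg I^{w}(\psi\rightarrow\phi_j)$, use (A3$_{I^{w}}$) to get $\neg I^{w}\bigwedge_j(\psi\rightarrow\phi_j)$. Then rewrite this as $\neg I^{w}(\psi\rightarrow\Phi_{k+1})$ by (RE$_{I^{w}}$), since the two formulas are provably equivalent in CPL. The rule requires the grasping side conditions $\neg I^{w}\overline{\bigwedge_j(\psi\rightarrow\phi_j)}$ and $\neg I^{w}\overline{\psi\rightarrow\Phi_{k+1}}$. Both formulas have the same variables, namely $Var(\psi)\cup\bigcup_j Var(\phi_j)$. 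The hypothesis $\neg I^{w}(\Phi_{k+1}\rightarrow\neg\psi)$ has exactly these variables too, and (A5$_{I^{w}}$) in contrapositive gives $\neg I^{w}\overline{\Phi_{k+1}\rightarrow\neg\psi}$ from it. Then (A7$_{I^{w}}$) transfers the grasp to any formula with the same or fewer variables. Note that $\bar{\chi}$ depends only on the set $Var(\chi)$, up to the order of conjuncts, which is harmless given (RE$_{I^{w}}$) plus the grasp conditions, or directly by (A7$_{I^{w}}$) in both directions. This yields the side conditions and hence all three premises of (A6$_{I^{w}}$), which then gives $\neg I^{w}\psi$.

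This argument does not actually use the induction hypothesis beyond the base axiom. It amounts to a direct reduction of the $k$-ary statement to (A6$_{I^{w}}$), using (A3$_{I^{w}}$) to bundle conjunctions. I would present it as such, which is consistent with the remark that the proof differs from Fan's Proposition 4.4. The main obstacle is the hyperintensional bookkeeping. Every use of (RE$_{I^{w}}$) must be licensed by grasp facts, and these must be extracted from an available non-ignorance formula via (A5$_{I^{w}}$) and (A7$_{I^{w}}$) with matching variable sets. The potential pitfall is a variable mismatch. For instance, when $k=1$ the formulas are fine, but if some reformulation introduced variables not occurring in the given hypothesis, no grasp fact would be available for them. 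So I would check that every formula passed to (RE$_{I^{w}}$) has its variables contained in $Var(\Phi_{k+1}\rightarrow\neg\psi)$.
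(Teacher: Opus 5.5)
Your argument is correct and is essentially the paper's proof. Both give a direct, non-inductive reduction to (A6$_{I^{w}}$) with $\phi := \bigwedge_j \phi_j$: both families of conjuncts are bundled via (A3$_{I^{w}}$), and $\neg I^{w}\bigwedge_j(\psi\rightarrow\phi_j)$ is converted into $\neg I^{w}(\psi\rightarrow\bigwedge_j\phi_j)$ by (RE$_{I^{w}}$). The only difference is where the grasp side conditions come from: you take them from the hypothesis $\neg I^{w}(\bigwedge_j\phi_j\rightarrow\neg\psi)$ via (A5$_{I^{w}}$) and (A7$_{I^{w}}$), while the paper takes them from $\neg I^{w}\bigwedge_j(\psi\rightarrow\phi_j)$ via (A5$_{I^{w}}$) and an appeal to the definition of $\bar{\chi}$ for formulas with the same variables. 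Your use of (A7$_{I^{w}}$) makes that step slightly more explicit, and the induction framing is unnecessary, as you note yourself.
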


\begin{proof}

\

\begin{enumerate}

\item $(\neg I^{w} (\bigwedge^{k}_{j=1} \phi_{j} \rightarrow \neg \psi) \wedge \neg I^{w}\bigwedge^{k}_{j=1} \phi_{j}  \wedge \neg I^{w}(\psi \rightarrow \bigwedge^{k}_{j=1} \phi_{j} )) \rightarrow \neg I^{w} \psi$ (by (A6$_{I^{w}}$));

\item $\bigwedge^{k}_{j=1} \neg I^{w} \phi_{j} \rightarrow \neg I^{w} \bigwedge^{k}_{j=1} \phi_{j}$ (from (A3$_{I^{w}}$) applied $k - 1$ times);

\item $\bigwedge^{k}_{j=1} \neg I^{w}(\psi \rightarrow \phi_{j}) \rightarrow \neg I^{w} \bigwedge^{k}_{j=1} (\psi \rightarrow \phi_{j})$ (from (A3$_{I^{w}}$) applied $k-1$ times);

\item $\bigwedge^{k}_{j=1} (\psi \rightarrow \phi_{j}) \leftrightarrow (\psi \rightarrow \bigwedge^{k}_{j=1} \phi_{j})$ (TAUT);

\item $(\neg I^{w} \overline{\bigwedge^{k}_{j=1} (\psi \rightarrow \phi_{j})} \wedge \neg I^{w} \overline{(\psi \rightarrow \bigwedge^{k}_{j=1} \phi_{j})}) \rightarrow (\neg I^{w} \bigwedge^{k}_{j=1} (\psi \rightarrow \phi_{j}) \leftrightarrow \neg I^{w} (\psi \rightarrow \bigwedge^{k}_{j=1} \phi_{j}))$ (from 4, by (RE$_{I^{w}}$));

\item $\neg I^{w} \overline{\bigwedge^{k}_{j=1} (\psi \rightarrow \phi_{j})} \leftrightarrow \neg I^{w} \overline{(\psi \rightarrow \bigwedge^{k}_{j=1} \phi_{j})}$ (by definition of $\bar{\chi_{1}}$ and $\bar{\chi_{2}}$ for any $\chi_{1}$ and $\chi_{2}$ in which occur only the same propositional variables);

\item $\neg I^{w} \bigwedge^{k}_{j=1} (\psi \rightarrow \phi_{j}) \rightarrow \neg I^{w} \overline{\bigwedge^{k}_{j=1} (\psi \rightarrow \phi_{j})}$ (from (A5$_{I^{w}}$) by contraposition);

\item $\neg I^{w} \bigwedge^{k}_{j=1} (\psi \rightarrow \phi_{j}) \rightarrow (\neg I^{w} \overline{\bigwedge^{k}_{j=1} (\psi \rightarrow \phi_{j})} \wedge \neg I^{w} \overline{(\psi \rightarrow \bigwedge^{k}_{j=1} \phi_{j})})$ (from 6, 7, by CPL);

\item $\neg I^{w} \bigwedge^{k}_{j=1} (\psi \rightarrow \phi_{j}) \rightarrow (\neg I^{w} \bigwedge^{k}_{j=1} (\psi \rightarrow \phi_{j}) \leftrightarrow \neg I^{w} (\psi \rightarrow \bigwedge^{k}_{j=1} \phi_{j}))$ (from 5, 8, by transitivity);

\item $\neg I^{w} \bigwedge^{k}_{j=1} (\psi \rightarrow \phi_{j}) \rightarrow \neg I^{w} (\psi \rightarrow \bigwedge^{k}_{j=1} \phi_{j})$ (from 9, by CPL);

\item $\bigwedge^{k}_{j=1} \neg I^{w}(\psi \rightarrow \phi_{j}) \rightarrow \neg I^{w} (\psi \rightarrow \bigwedge^{k}_{j=1} \phi_{j})$ (from 3, 10, by transitivity);

\item $\neg I^{w} (\bigwedge^{k}_{j=1} \phi_{j} \rightarrow \neg \psi) \rightarrow \neg I^{w} (\bigwedge^{k}_{j=1} \phi_{j} \rightarrow \neg \psi)$ (TAUT);

\item $(\neg I^{w} (\bigwedge^{k}_{j=1} \phi_{j} \rightarrow \neg \psi) \wedge \bigwedge^{k}_{j=1} \neg I^{w} \phi_{j} \wedge \bigwedge^{k}_{j=1} \neg I^{w}(\psi \rightarrow \phi_{j})) \rightarrow (\neg I^{w} (\bigwedge^{k}_{j=1} \phi_{j} \rightarrow \neg \psi) \wedge \neg I^{w}\bigwedge^{k}_{j=1} \phi_{j}  \wedge \neg I^{w}(\psi \rightarrow \bigwedge^{k}_{j=1} \phi_{j} ))$ (from 2, 11, 12, by CPL);

\item $(\neg I^{w} (\bigwedge^{k}_{j=1} \phi_{j} \rightarrow \neg \psi) \wedge \bigwedge^{k}_{j=1} \neg I^{w} \phi_{j} \wedge \bigwedge^{k}_{j=1} \neg I^{w}(\psi \rightarrow \phi_{j})) \rightarrow \neg I^{w} \psi$ (from 1, 13, by transitivity).

\end{enumerate}

\end{proof}

Completeness is proved by constructing a canonical model. We define maximal consistent sets in a standard way. A set $\Gamma \subseteq \mathcal{L}_{I^{w}}$ is consistent if $\Gamma \not \vdash^{\texttt{ HIW}} \perp$; $\Gamma$ is maximal if there is not $\Gamma' \subseteq \mathcal{L}_{I^{w}}$ s.t. $\Gamma \subset \Gamma'$ and $\Gamma' \not \vdash^{ \texttt{HIW}} \perp$. The following lemma can be proven in a standard way.

\begin{lemma}[Lindenbaum's lemma]
\label{LindIW}

Every consistent set of $\texttt{HIW}$ can be extended to a maximally consistent one.

\end{lemma}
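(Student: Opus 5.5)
The plan is the standard Lindenbaum construction. First, fix an enumeration $\phi_0, \phi_1, \phi_2, \dots$ of all formulas of $\mathcal{L}_{I^{w}}$. This is possible because the language is built from a countable set \texttt{Prop} by finitely many finitary connectives ($\neg$, $\wedge$, $I^{w}$, $\Box$), so it is countable. If \texttt{Prop} were uncountable, I would run the same argument with Zorn's lemma on the family of consistent supersets of $\Gamma$ ordered by inclusion.

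Given a consistent $\Gamma$, I define a chain recursively. Set $\Gamma_0 = \Gamma$. Put $\Gamma_{n+1} = \Gamma_n \cup \{\phi_n\}$ if this set is consistent, and $\Gamma_{n+1} = \Gamma_n \cup \{\neg\phi_n\}$ otherwise. Finally, let $\Gamma^{*} = \bigcup_n \Gamma_n$.

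The argument then has three steps.

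\begin{itemize}
\item[(a)] \emph{Each $\Gamma_n$ is consistent.} Suppose both $\Gamma_n \cup \{\phi_n\} \vdash \perp$ and $\Gamma_n \cup \{\neg\phi_n\} \vdash \perp$. By the deduction theorem we get $\Gamma_n \vdash \neg\phi_n$ and $\Gamma_n \vdash \phi_n$, so $\Gamma_n \vdash \perp$.
\item[(b)] \emph{$\Gamma^{*}$ is consistent.} Derivations are finite, so any derivation of $\perp$ from $\Gamma^{*}$ uses only finitely many premises. These all lie in some single $\Gamma_n$, contradicting (a).
\item[(c)] \emph{$\Gamma^{*}$ is maximal.} For every formula $\phi = \phi_n$, either $\phi$ or $\neg\phi$ lies in $\Gamma^{*}$. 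So any proper superset $\Gamma'$ contains some $\psi \notin \Gamma^{*}$. Then $\neg\psi \in \Gamma^{*} \subseteq \Gamma'$, and $\Gamma' \vdash \perp$.
\end{itemize}

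The one point needing care, and the step I expect to be the main obstacle, is the deduction theorem for $\vdash^{\texttt{HIW}}$ used in step (a). The system contains the non-axiomatic rules (NEC$_{I^{w}}$), (RE$_{I^{w}}$) and the $S5_\Box$ necessitation rule. To keep the deduction theorem valid, I will read "$\Gamma \vdash \phi$" in the standard way for modal logics: there exist $\gamma_1,\dots,\gamma_m \in \Gamma$ with $\vdash^{\texttt{HIW}} (\gamma_1 \wedge \dots \wedge \gamma_m) \rightarrow \phi$. Under this reading the rules are applied only to theorems, never to premises, so only (CPL) and Modus Ponens are involved.

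With that definition, both the deduction theorem and the finiteness property in (b) are immediate. The remaining steps use only propositional reasoning, so the specific axioms (A1$_{I^{w}}$)–(A7$_{I^{w}}$) play no role in this lemma.
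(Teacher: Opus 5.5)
Your proposal is correct and is the standard Lindenbaum construction the paper has in mind; the paper gives no proof and only says the lemma ``can be proven in a standard way.'' Your explicit choice to read $\Gamma \vdash \phi$ as $\vdash^{\texttt{HIW}} (\gamma_1 \wedge \dots \wedge \gamma_m) \rightarrow \phi$, so that (NEC$_{I^{w}}$), (RE$_{I^{w}}$) and $\Box$-necessitation apply only to theorems, is the right way to secure the deduction theorem and the finiteness property that the paper leaves implicit.
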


Let $\mathcal{X}^{C}$ be the set of all $\mathtt{HIW}$-maximally consistent sets. For each $\Gamma \in \mathcal{X}^{C}$, we define:

\begin{itemize}

\item $\Gamma[\Box] := \{ \phi \mid \Box \phi \in \Gamma\}$;

\item $\Gamma[I^{w}] := \{\phi \mid \neg I^{w} \phi \wedge \neg I^{w} (\psi \rightarrow \phi) \in \Gamma\}$.

\end{itemize}

We define $\sim$ on $\mathcal{X}^{C}$ as 

\begin{center}

$\Gamma \sim \Delta$ iff $\Gamma[\Box] \subseteq \Delta$.

\end{center}

By definition, and since $\Box$ is an S5 modality, it is easy to verify that $\sim$ is an equivalence relation. By the same reason the following proposition can be proved straightforwardly.

\begin{prop}
\label{equivIW}

For any two maximally consistent sets $\Gamma$ and $\Delta$ s.t. $\Gamma \sim \Delta$, $\Gamma[\Box] = \Delta[\Box]$.

\end{prop}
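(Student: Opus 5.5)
We need to prove: if $\Gamma \sim \Delta$ (i.e. $\Gamma[\Box] \subseteq \Delta$), then $\Gamma[\Box] = \Delta[\Box]$. The argument only uses the S5 axioms for $\Box$, available through (S5$_{\Box}$), together with the standard properties of maximally consistent sets: closure under $\vdash$, and for each $\chi$ exactly one of $\chi,\neg\chi$ belongs to the set. The two inclusions are handled separately.

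For $\Gamma[\Box] \subseteq \Delta[\Box]$, let $\phi \in \Gamma[\Box]$, so $\Box\phi \in \Gamma$. By axiom 4, $\vdash \Box\phi \rightarrow \Box\Box\phi$, hence $\Box\Box\phi \in \Gamma$. This means $\Box\phi \in \Gamma[\Box]$, and since $\Gamma[\Box]\subseteq\Delta$ we get $\Box\phi\in\Delta$, i.e. $\phi \in \Delta[\Box]$.

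For $\Delta[\Box] \subseteq \Gamma[\Box]$, we argue by contraposition. Suppose $\Box\phi \in \Delta$ but $\Box\phi \notin \Gamma$. By maximality, $\neg\Box\phi \in \Gamma$. Axiom 5 in the form $\vdash \neg\Box\phi \rightarrow \Box\neg\Box\phi$ then gives $\Box\neg\Box\phi \in \Gamma$, so $\neg\Box\phi \in \Gamma[\Box] \subseteq \Delta$. Thus $\Delta$ contains both $\Box\phi$ and $\neg\Box\phi$, contradicting its consistency.

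No step here is a real obstacle; the only point needing care is that both 4 and 5 are needed, and they are supplied by (S5$_{\Box}$). This also confirms the earlier remark in the paper that $\sim$ is an equivalence relation: the two inclusions together give symmetry, while reflexivity (via T) and transitivity follow routinely.
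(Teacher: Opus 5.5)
Your proof is correct and spells out the routine S5 argument the paper leaves implicit: the paper only says the proposition follows straightforwardly because $\Box$ is an S5 modality, and your use of axiom 4 for $\Gamma[\Box]\subseteq\Delta[\Box]$ and axiom 5 for the converse is exactly that argument. One small correction to your closing aside: getting symmetry of $\sim$ from the equality also needs T, since you must know $\Gamma[\Box]\subseteq\Gamma$.
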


Now we are able to define canonical model for a given set of formulas.

\begin{defi}[Canonical Model for $\Gamma_{0}$]
\label{canmodIW}

Given a maximal consistent set $\Gamma_{0}$ of $\texttt{HIW}$, the canonical model for $\Gamma_{0}$ is a tuple $\mathcal{M}^{C} = \{W^{C}, R^{C}, v^{C}, \mathcal{T}^{C}\}$, where 

\begin{itemize}

\item $W^{C} = \{ \Gamma \in \mathcal{X}^{C} \mid \Gamma_{0} \sim \Gamma\}$;

\item $R^{C} \subseteq W^{C} \times W^{C}$ s.t. for all $\Gamma, \Delta \in W^{C}$

\begin{center}

(1) $\neg I^{w} \bar{\psi} \wedge I^{w} \psi \in \Gamma$ and

(2) $\Gamma R^{C} \Delta$ iff $\Gamma[I^{w}] \subseteq \Delta$,

where $\Gamma[I^{w}] := \{\phi \mid \neg I^{w} \phi \wedge \neg I^{w} (\psi \rightarrow \phi) \in \Gamma\}$;

\end{center}

\item $v^{C}(p) = \{\Gamma \in W^{C} \mid p \in \Gamma\}$;

\item $\mathcal{T}^{C}$ is such that

\begin{itemize}

\item $T^{C}= \{a, b\}$ where $a = \{p \in \mathtt{Prop} \mid I^{w} \bar{p} \in \Gamma_{0}\}$ and $b = \{ p \in \mathtt{Prop} \mid \neg I^{w} \bar{p} \in \Gamma_{0}\}$;

\item $\oplus^{C} : T^{C} \times T^{C} \mapsto T^{C}$ s.t. $a \oplus^{C} a = a$, $b \oplus^{C} b = b$, $a \oplus^{C} b = b \oplus^{C} a = a$;

\item $\mathfrak{K}^{C} = b$;

\item $t^{C}: \mathtt{Prop} \mapsto T^{C}$ s.t. for all $c \in T^{C}$ and $p \in \mathtt{Prop} : t^{C}(p) = c$ iff $p \in c$, and $t^{C}$ extends to the whole language by $t^{C}(\phi) = \oplus^{C}\{t^{C}(p) \mid p \in Var(\phi)\}$.

\end{itemize}

\end{itemize}

\end{defi}

To prove the Truth Lemma, the following propositions are useful.

\begin{prop}
\label{useful1}

For all $\phi \in \mathcal{L}_{I^{w}}$ and for any $\Gamma \in W^{C}$: $\neg I^{w} \bar{\phi} \in \Gamma$ iff $\forall p \in Var(\phi)$: $\neg I^{w} \bar{p} \in \Gamma$.

\end{prop}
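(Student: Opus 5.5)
The plan is a purely syntactic argument inside the maximal consistent set $\Gamma$, relying mainly on axiom (A7$_{I^{w}}$), axiom (A3$_{I^{w}}$), and the fact that $\bar{\phi}$ is by definition the conjunction $\bigwedge_{p \in Var(\phi)} (p \vee \neg p)$. Since $\Gamma$ is maximal consistent, it is closed under $\mathtt{HIW}$-derivability and under conjunction. So it suffices to show that $\vdash \neg I^{w} \bar{\phi} \leftrightarrow \bigwedge_{p \in Var(\phi)} \neg I^{w} \bar{p}$. Here $\bar{p} = p \vee \neg p$ and $Var(\phi)$ is finite.

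For the left-to-right direction, take any $p \in Var(\phi)$. Then $Var(p) = \{p\} \subseteq Var(\phi)$, so the instance $\neg I^{w} \bar{\phi} \rightarrow \neg I^{w} \bar{p}$ of (A7$_{I^{w}}$) is a theorem. Since $\neg I^{w} \bar{\phi} \in \Gamma$, closure under Modus Ponens gives $\neg I^{w} \bar{p} \in \Gamma$.

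For the right-to-left direction, list $Var(\phi) = \{p_1, \dots, p_n\}$ in the order used to write $\bar{\phi}$, so that $\bar{\phi}$ is literally $\bar{p}_1 \wedge \dots \wedge \bar{p}_n$. From $\neg I^{w} \bar{p}_j \in \Gamma$ for all $j$, I will apply (A3$_{I^{w}}$) $n-1$ times, as in step 2 of the proof of Proposition \ref{genA6IW}. This yields $\neg I^{w} (\bar{p}_1 \wedge \dots \wedge \bar{p}_n) \in \Gamma$, which is exactly $\neg I^{w} \bar{\phi}$.

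The main obstacle is the bracketing and ordering of the big conjunction defining $\bar{\phi}$. Iterated (A3$_{I^{w}}$) produces one specific bracketing, and $I^{w}$ is not closed under arbitrary logical equivalence. If the conjunction is instead treated up to reassociation or permutation, I will bridge the gap with (RE$_{I^{w}}$). Let $\chi$ be the formula produced by (A3$_{I^{w}}$). Then $\vdash \chi \leftrightarrow \bar{\phi}$ by CPL, and $Var(\chi) = Var(\bar{\phi}) = Var(\phi)$. We already have $\neg I^{w} \chi \in \Gamma$, and by (A5$_{I^{w}}$) contraposed this gives $\neg I^{w} \bar{\chi} \in \Gamma$. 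Since $\bar{\chi}$ and $\bar{\bar{\phi}}$ share the same variables, (A7$_{I^{w}}$) in both directions gives $\neg I^{w} \bar{\bar{\phi}} \in \Gamma$; this is the same move as step 6 of the proof of Proposition \ref{genA6IW}. The hypotheses of (RE$_{I^{w}}$) are then met, so it transfers $\neg I^{w} \chi$ to $\neg I^{w} \bar{\phi}$.

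A degenerate case needs a separate check: if $Var(\phi)$ had a single element, the statement is immediate, since then $\bar{\phi} = \bar{p}$. Every formula of the language contains at least one variable, because there are no constants, so $Var(\phi)$ is never empty.
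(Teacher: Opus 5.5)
Your proof is correct and follows the paper's argument: (A7$_{I^{w}}$) instantiated with $\psi := p$ gives the left-to-right direction, and (A3$_{I^{w}}$) iterated over $\bar{p}_1, \dots, \bar{p}_n$ gives the converse. Your bracketing detour is sound (the paper simply identifies the iterated conjunction with $\bar{\phi}$), though (A5$_{I^{w}}$) contraposed followed by (A7$_{I^{w}}$), using $Var(\phi) \subseteq Var(\chi)$, already takes you from $\neg I^{w}\chi$ to $\neg I^{w}\bar{\phi}$ without (RE$_{I^{w}}$).
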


\begin{proof}

The proof follows closely the proof of Lemma 9 \citep{Ozgun2023}.

\begin{itemize}

\item[$(\Rightarrow)$] Let $\neg I^{w} \bar{\phi} \in \Gamma$. Then, by (A7$_{I^{w}}$), $\neg I^{w} \overline{p} \in \Gamma$ for all $p \in Var(\phi)$.

\item[$(\Leftarrow)$] Let $Var(\phi) = \{p_{1}, ..., p_{n}\}$. Then, $\bar{\phi} = \bar{p_{1}} \wedge ... \wedge \bar{p_{n}}$. Let $\neg I^{w} \bar{p_{i}} \in \Gamma$ for all $p_{i} \in \{p_{1}, ..., p_{n}\}$. Then, $\bigwedge_{i \leq n} \neg I^{w} \bar{p_{i}} \in \Gamma$, because $\Gamma$ is a maximal consistent set. By (A3$_{I^{w}}$), $\neg I^{w} \bigwedge_{i \leq n} \bar{p_{i}} \in \Gamma$, that is $\neg I^{w} \bar{\phi} \in \Gamma$.

\end{itemize}

\end{proof}

\begin{prop}
\label{notIwtopic}

Given a canonical topical-sensitive model $\mathcal{M}^{C} =\{W^{C}, R^{C}, v^{C}, \mathcal{T}^{C}\}$, for any $\Gamma \in W^{C}$, and $\phi \in \mathcal{L}_{I^{w}}$:

\begin{center}

$\neg I^{w} \bar{\phi} \in \Gamma$ iff $t^{C}(\phi) \sqsubseteq \mathfrak{K}^{C}$.

\end{center}

\end{prop}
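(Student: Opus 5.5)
The proof reduces the claim to the atomic case via Proposition \ref{useful1}, and then transfers the information from $\Gamma_{0}$ to an arbitrary $\Gamma \in W^{C}$ using (A4$_{I^{w}}$).

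\textbf{Step 1: every world agrees with $\Gamma_{0}$ on grasping.} We show that for every $\Gamma \in W^{C}$ and every $p$, $\neg I^{w}\bar{p} \in \Gamma$ iff $\neg I^{w}\bar{p} \in \Gamma_{0}$.

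For the left-to-right direction, suppose $\neg I^{w}\bar{p} \in \Gamma_{0}$. By (A4$_{I^{w}}$), $\Box\neg I^{w}\bar{p} \in \Gamma_{0}$. Since $\Gamma_{0} \sim \Gamma$, we have $\Gamma_{0}[\Box] \subseteq \Gamma$, so $\neg I^{w}\bar{p} \in \Gamma$.

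For the converse, suppose $\neg I^{w}\bar{p} \in \Gamma$ but $I^{w}\bar{p} \in \Gamma_{0}$. By (A4$_{I^{w}}$), $\Box\neg I^{w}\bar{p} \in \Gamma$, so $\neg I^{w}\bar{p} \in \Gamma[\Box]$. By Proposition \ref{equivIW}, $\Gamma[\Box] = \Gamma_{0}[\Box]$, hence $\neg I^{w}\bar{p} \in \Gamma_{0}$. This contradicts the consistency of $\Gamma_{0}$. (Note that $\sim$ is symmetric, so the use of Proposition \ref{equivIW} is legitimate.)

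\textbf{Step 2: the canonical topics.} Since $\Gamma_{0}$ is maximal, each $p$ lies in exactly one of $a$ and $b$. We adopt the convention that an empty class is still treated as a label, or note that the degenerate cases are harmless. Then $t^{C}(p)=b$ iff $\neg I^{w}\bar{p}\in\Gamma_{0}$.

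By the definition of $\oplus^{C}$, $b$ is the bottom element and $a$ the top, so $x \sqsubseteq b$ iff $x=b$. Hence $t^{C}(\phi)=\oplus^{C}\{t^{C}(p) : p\in Var(\phi)\} \sqsubseteq \mathfrak{K}^{C}=b$ iff $t^{C}(p)=b$ for all $p\in Var(\phi)$. This in turn holds iff $\neg I^{w}\bar{p}\in\Gamma_{0}$ for all $p\in Var(\phi)$, and by Step 1 iff $\neg I^{w}\bar{p}\in\Gamma$ for all $p \in Var(\phi)$.

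\textbf{Step 3: conclusion.} By Proposition \ref{useful1}, the last condition of Step 2 is equivalent to $\neg I^{w}\bar{\phi}\in\Gamma$, which proves the claim.

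\textbf{Main obstacle.} The delicate part is the bookkeeping in Step 2. If $a$ and $b$ coincide as sets, which happens when one of them is empty, then $T^{C}$ collapses, and the fusion table must still be read so that $b$ is the designated grasped topic. I would handle this by treating $a,b$ as formal labels, for instance by tagging them as $(a,0)$ and $(b,1)$, so that $T^{C}$ always has exactly two elements. Under this reading $\oplus^{C}$ is idempotent, commutative and associative, and $b\sqsubseteq a$. The variable-free case $Var(\phi)=\emptyset$ does not arise, since the language has no constants.
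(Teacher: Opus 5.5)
Your proof is correct and follows the same route as the paper, which transports Corollary 10 of \citet{Ozgun2023}: reduce to atoms via Proposition~\ref{useful1}, transfer $\neg I^{w}\bar{p}$ between $\Gamma$ and $\Gamma_{0}$ using (A4$_{I^{w}}$) and the relation $\sim$, and then read off the two-element canonical topic structure, a computation your Step~2 makes explicit. One correction to your side remark: $\Gamma_{0}$ is maximal consistent, so $a$ and $b$ partition $\mathtt{Prop}$, and if one is empty the other is all of $\mathtt{Prop}$; the two never coincide, so your tagging device is harmless but unnecessary.
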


\begin{proof}

The proof is the same as of Corollary 10 \citep{Ozgun2023}. We just replace $K$ by $\neg I^{w}$, Lemma 9 by Proposition \ref{useful1}, and Ax3$_{K}$ by (A4$_{I^{w}}$).

\end{proof}

The definition of canonical relation $R^{C}$ and the proof of Truth Lemma follow closely \cite[p. 84-85]{Fan2015}.

\begin{lemma}[Truth Lemma]
\label{truthIW}

Let $\mathcal{M}^{C} = \{W^{C}, R^{C}, v^{C}, \mathcal{T}^{C}\}$ be the canonical model for $\Gamma_{0}$. Then, for all $\phi \in \mathcal{L}_{I^{w}}$ and $\Gamma \in W^{C}$, we have $\mathcal{M}^{C}, \Gamma \models \phi$ iff $\phi \in \Gamma$.

\end{lemma}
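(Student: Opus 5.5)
Proof by induction on the complexity of $\phi$. The base case holds by definition of $v^{C}$. The Boolean cases follow from maximal consistency of $\Gamma$. For $\Box\phi$ I use the standard argument: $W^{C}$ is the $\sim$-class of $\Gamma_{0}$, and by Proposition \ref{equivIW} all its members share the same $\Box$-theory. If $\Box\phi\notin\Gamma$, then $\Gamma[\Box]\cup\{\neg\phi\}$ is consistent by S5 reasoning. Lindenbaum's Lemma \ref{LindIW} extends it to some $\Delta$ with $\Gamma\sim\Delta$, so $\Delta\in W^{C}$ by transitivity of $\sim$, and the induction hypothesis finishes the case. The converse direction is immediate from the definition of $\sim$.

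The substantive case is $I^{w}\phi$. I split on whether $\neg I^{w}\bar\phi\in\Gamma$.

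\textbf{Topic not grasped.} If $\neg I^{w}\bar\phi\notin\Gamma$, then by Proposition \ref{notIwtopic} we have $t^{C}(\phi)\not\sqsubseteq\mathfrak{K}^{C}$, so $\mathcal{M}^{C},\Gamma\models I^{w}\phi$. On the syntactic side, $I^{w}\bar\phi\in\Gamma$, and (A5$_{I^{w}}$) gives $I^{w}\phi\in\Gamma$. So both sides hold.

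\textbf{Topic grasped.} If $\neg I^{w}\bar\phi\in\Gamma$, then $t^{C}(\phi)\sqsubseteq\mathfrak{K}^{C}$. It remains to show that $I^{w}\phi\in\Gamma$ iff $\Gamma$ has $R^{C}$-successors satisfying $\phi$ and $\neg\phi$. I follow Fan's canonical relation, as the definition of $R^{C}$ indicates: $\Gamma[I^{w}]$ collects $\chi$ with $\neg I^{w}\chi\wedge\neg I^{w}(\psi\to\chi)\in\Gamma$ for the witness $\psi$ with $\neg I^{w}\bar\psi\wedge I^{w}\psi\in\Gamma$.

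\emph{Left to right.} Assume $I^{w}\phi\in\Gamma$, and take $\psi:=\phi$ as the witness in clause (1). I show that $\Gamma[I^{w}]\cup\{\phi\}$ and $\Gamma[I^{w}]\cup\{\neg\phi\}$ are both consistent. Suppose not, say $\vdash\bigwedge_j\chi_j\to\neg\phi$ with $\chi_j\in\Gamma[I^{w}]$. Then $\neg I^{w}\chi_j\in\Gamma$ and $\neg I^{w}(\phi\to\chi_j)\in\Gamma$ for each $j$. The topics involved are grasped: non-ignorance of $\chi_j$ yields $\neg I^{w}\bar\chi_j$ by contraposed (A5$_{I^{w}}$), and (A7$_{I^{w}}$) together with Proposition \ref{useful1} handle mixed formulas. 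So (NEC$_{I^{w}}$) gives $\neg I^{w}(\bigwedge_j\chi_j\to\neg\phi)\in\Gamma$. Proposition \ref{genA6IW} then yields $\neg I^{w}\phi\in\Gamma$, a contradiction. The $\neg\phi$ case is symmetric, using ($I^{w}\leftrightarrow$) to swap $\phi$ and $\neg\phi$. Lindenbaum then supplies the two successors. Each lies in $W^{C}$: they contain $\Gamma[\Box]$ because $\Box\chi\to\neg I^{w}\chi$ is derivable for grasped $\chi$ (from S5 together with (A4$_{I^{w}}$), (NEC$_{I^{w}}$) and (RE$_{I^{w}}$)), and (A4$_{I^{w}}$) keeps grasping constant across worlds. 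The induction hypothesis then finishes this direction.

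\emph{Right to left.} Assume $\neg I^{w}\phi\in\Gamma$. I show that every successor agrees on $\phi$. Given successors $\Delta_1\ni\phi$ and $\Delta_2\ni\neg\phi$, use (A1$_{I^{w}}$) with grasped $\psi$ and $\chi$, together with (A2$_{I^{w}}$) and (A3$_{I^{w}}$), to put either $\phi$ or $\neg\phi$ into $\Gamma[I^{w}]$. Showing that $\neg I^{w}(\psi\to\phi)$ or $\neg I^{w}(\psi\to\neg\phi)$ lies in $\Gamma$ uses (A1$_{I^{w}}$) instantiated with $\phi$ and $\psi$. That membership in $\Gamma[I^{w}]$ contradicts one of $\Delta_1,\Delta_2$.

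\textbf{Main obstacle.} The hardest step is the left-to-right direction of the grasped case. Its consistency argument relies on (RE$_{I^{w}}$) and (NEC$_{I^{w}}$) only in their grasping-guarded forms, so the key bookkeeping is to show that every formula built from $\phi$, $\psi$ and the $\chi_j$ has a grasped topic. I would first isolate this as a lemma: if $\neg I^{w}\chi\in\Gamma$ for all $\chi$ in a finite set, then $\neg I^{w}\bar\theta\in\Gamma$ for any Boolean combination $\theta$ of them. It follows from contraposed (A5$_{I^{w}}$) together with Proposition \ref{useful1}.
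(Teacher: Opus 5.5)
Your argument breaks where you place the Lindenbaum extensions in $W^{C}$. You claim that $(\Box\chi\wedge\neg I^{w}\bar{\chi})\to\neg I^{w}\chi$ is derivable from (S5$_{\Box}$), (A4$_{I^{w}}$), (NEC$_{I^{w}}$) and (RE$_{I^{w}}$). It is not: the two rules act only on theorems, and (A4$_{I^{w}}$) is the only axiom linking $\Box$ with $I^{w}$.

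To see this, evaluate $\Box$ along an arbitrary equivalence relation $E$ unrelated to $R$. Every axiom and rule of $\mathtt{HIW}$ stays sound. Apart from (S5$_{\Box}$) and (A4$_{I^{w}}$), everything looks only at $R$-successors and topics. (A4$_{I^{w}}$) survives because $\neg I^{w}\bar{\phi}$ just says $t(\phi)\sqsubseteq\mathfrak{K}$, which does not depend on the world.

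Now take $W=\{w,w_{1},w_{2}\}$, $E$ the identity, $R=\{(w,w_{1}),(w,w_{2})\}$, $v(p)=\{w,w_{1}\}$, and a one-element topic model with $t(p)=\mathfrak{K}$. Then $\Box p\wedge\neg I^{w}\bar{p}\wedge I^{w}p$ holds at $w$. So this formula is $\mathtt{HIW}$-consistent, and your bridge principle is not a theorem.

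Separately, an inclusion $\Gamma[\Box]\subseteq\Gamma[I^{w}]$ could never cover members of $\Gamma[\Box]$ with ungrasped topic, since those satisfy $I^{w}\chi\in\Gamma$ by (A5$_{I^{w}}$). You would have to add $\Gamma[\Box]$ to the seed set instead.

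This gap cannot be closed inside $\mathtt{HIW}$, because the lemma itself fails. Let $\Gamma_{0}\supseteq\{\Box p,\neg I^{w}\bar{p},I^{w}p\}$. Every $\Gamma\in W^{C}$ contains $p$, and $t^{C}(p)=b=\mathfrak{K}^{C}$. Hence $\mathcal{M}^{C},\Gamma_{0}\not\models I^{w}p$ for any $R^{C}\subseteq W^{C}\times W^{C}$, although $I^{w}p\in\Gamma_{0}$. Likewise $(\Box p\wedge\neg I^{w}\bar{p})\to\neg I^{w}p$ is valid when $\Box$ is global but is underivable, so completeness fails as well.

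The paper's proof follows your route exactly:
\begin{itemize}
\item witnesses $\psi$ and $\neg\psi$ for the two successors;
\item (NEC$_{I^{w}}$) plus Proposition~\ref{genA6IW} for consistency;
\item (RE$_{I^{w}}$) plus (A1$_{I^{w}}$) for the other direction.
\end{itemize}
It simply never checks that $\Gamma',\Gamma''\in W^{C}$. You were right that this needs an argument, but none is available.

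The missing ingredient is an extra axiom such as $(\Box\phi\wedge\neg I^{w}\bar{\phi})\to\neg I^{w}\phi$. With it, suppose $\Gamma[I^{w}]\cup\Gamma[\Box]\cup\{\psi\}$ is inconsistent, say $\vdash\beta\wedge\bigwedge_{j}\chi_{j}\to\neg\psi$ with $\Box\beta\in\Gamma$. Then $\Box(\bigwedge_{j}\chi_{j}\to\neg\psi)\in\Gamma$, and the new axiom replaces your appeal to (NEC$_{I^{w}}$).

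A smaller point concerns your right-to-left direction. The two successors may come with different witnesses $\psi_{1},\psi_{2}$. So (A1$_{I^{w}}$) must be instantiated with $\neg\psi_{1}$ and $\neg\psi_{2}$ after contraposing via (RE$_{I^{w}}$), as the paper does. (A2$_{I^{w}}$) and (A3$_{I^{w}}$) are not needed there.
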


\begin{proof}

The proof is by induction on the complexity of $\phi$. The cases for propositional variables, boolean operators and $\Box$ are standard. The case of $\phi := I^{w} \psi$ is as follows.

\begin{itemize}

\item[($\Rightarrow$)] Assume $\mathcal{M}^{C}, \Gamma \models I^{w} \psi$. This means that either $t(\psi) \not \sqsubseteq \mathfrak{K}^{C}$, or there exist $\Gamma'$ and $\Gamma''$ such that $\Gamma R^{C} \Gamma'$, $\Gamma R^{C} \Gamma''$, $\mathcal{M}^{C}, \Gamma' \models \psi$, and $\mathcal{M}^{C}, \Gamma'' \models \neg \psi$. If $t(\psi) \not \sqsubseteq \mathfrak{K}^{C}$, then by Prop. \ref{notIwtopic}, $\neg I^{w} \bar{\psi} \not \in \Gamma$, that is $I^{w} \bar{\psi} \in \Gamma$, and thus, by (A5$_{I^{w}}$) $I^{w} \psi \in \Gamma$ (for which we searched).

Let $t^{C}(\psi) \sqsubseteq \mathfrak{K}^{C}$. By induction hypothesis we have (i) $\psi \in \Gamma'$ and (ii) $\neg \psi \in \Gamma''$. By definition of $\Gamma R^{C} \Gamma'$, there exist $\chi$ s.t. (iii) $\neg I^{w}\bar{\chi} \wedge I^{w} \chi \in \Gamma$ and (iv) for all $\theta$, if $\neg I^{w} \theta \wedge \neg I^{w} (\chi \rightarrow \theta) \in \Gamma$, then $\theta \in \Gamma'$. Let (v) $\neg I^{w} \psi \in \Gamma$. From (v) and (i), we have (vi) $\neg I^{w} \neg \psi \in \Gamma$ and (vii) $\neg \psi \not \in \Gamma'$. From (iv), (vi), and (vii), we have (viii) $I^{w}( \chi \rightarrow \neg \psi) \in \Gamma$. From (viii), (iii) ($\neg I^{w} \bar{\chi} \in \Gamma$, our assumption that $\neg I^{w} \bar{\psi} \in \Gamma$, and (RE$_{I^{w}}$), we have (ix) $I^{w}(\psi \rightarrow \neg \chi) \in \Gamma$.Then, from (ii), in the same way as for the case of (i), we get (x) $I^{w}(\neg \psi \rightarrow \chi') \in \Gamma$ for some $\chi'$. From $\neg I^{w} \bar{\chi} \in \Gamma$, $\neg I^{w} \bar{\chi'} \in \Gamma$,  (ix), (x), and (A1$_{I^{w}}$), we have $I^{w} \psi \in \Gamma$, which contradicts (v).

\item[($\Leftarrow$)] Assume that $I^{w} \psi \in \Gamma$. If $t^{C}(\psi) \not \sqsubseteq \mathfrak{K}^{C}$, then, by the definition of $I^{w}$, we have $\mathcal{M}^{C}, \Gamma \models I^{w}\psi$. If $t^{C}(\psi) \sqsubseteq \mathfrak{K}^{C}$, then, by Prop. \ref{notIwtopic}, we have $\neg I^{w} \bar{\psi} \in \Gamma$. Thus, we have $\neg I^{w} \bar{\psi} \wedge I^{w} \psi \in \Gamma$ and $\neg I^{w} \bar{\neg \psi} \wedge I^{w} \neg \psi \in \Gamma$. Then, we need to construct $\Gamma' , \Gamma'' \in W^{C}$, s.t. $\Gamma R^{C} \Gamma'$, $\Gamma R^{C} \Gamma''$, $\psi \in \Gamma'$, and $\neg \psi \in \Gamma''$. This contributes to show that:

\begin{enumerate}

\item[(1)] $\{\chi \mid \neg I^{w} \chi \wedge \neg I^{w}(\psi \rightarrow \chi) \in \Gamma\} \cup \{\psi\}$ is consistent. 

\item[(2)] $\{\chi \mid\neg I^{w} \chi \wedge \neg I^{w}(\neg \psi \rightarrow \chi) \in \Gamma\} \cup \{\neg \psi\}$ is consistent. 

\end{enumerate}

For the case (1), assume it is inconsistent, that is, (i) there exist $\chi_{1}, ..., \chi_{n}$ s.t. $\vdash \chi_{1} \wedge ... \wedge \chi_{n} \rightarrow \neg \psi$ and (ii) $\neg I^{w} \chi_{k} \wedge \neg I^{w}(\psi \rightarrow \chi_{k}) \in \Gamma$ for all $k \in [1, n]$. From (ii) and (A5$_{I^{w}}$), we get $\neg I^{w} \bar{\chi_{k}} \in \Gamma$ for all $k \in [1, n]$. Then, from $(NEC_{I^{w}})$, $\neg I^{w}(\chi_{1} \wedge ... \wedge \chi_{n} \rightarrow \neg \psi) \in \Gamma$. By Prop. \ref{genA6IW}, we have $\neg I^{w} \psi \in \Gamma$, which is a contradiction.

From (1), the definition of $R^{C}$, and Lemma \ref{LindIW}, we conclude that there exist $\Gamma'$ s.t. $\Gamma R^{C} \Gamma'$ and $\psi \in \Gamma'$.

For the case (2) the reasoning is the same. 
Thus, there exist $\Gamma''$ s.t. $\Gamma R^{C} \Gamma''$ and $\psi \in \Gamma''$. By induction hypothesis and the definition of $I^{w}$, this means that $\mathcal{M}^{C}, \Gamma \models I^{w} \psi$.

\end{itemize}

\end{proof}

\textbf{Theorem \ref{theorHIWcomplete}}. The system $\mathtt{HIW}$ is complete.

\begin{proof}

Let $\not \vdash \phi$. Then $\{\neg \phi\}$ is consistent. By Lemma \ref{LindIW}, there exists a maximal consistent set $\Gamma_{0}$ s.t. $\phi \not \in \Gamma_{0}$. By Lemma \ref{truthIW}, we have $\mathcal{M}^{C}, \Gamma_{0} \not \models \phi$.

\end{proof}

\section{Proofs for system \texttt{HIU}}

\subsection{Soundness of \texttt{HIU}}
\label{appHIUsound}

\begin{theor}

The system $\mathtt{HIU}$ is sound.

\end{theor}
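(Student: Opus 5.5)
The plan is to check, for each axiom and rule of $\mathtt{HIU}$, that it is valid (respectively, validity-preserving) on every topic-sensitive model, following the pattern of the soundness proof for $\mathtt{HIW}$ in Appendix \ref{appHIWsound}. Throughout I will use the unpacked form of the negated clause of Definition \ref{HIUdefi}:
\[
\mathcal{M}, w \models \neg I^{u}\phi \quad\text{iff}\quad t(\phi) \sqsubseteq \mathfrak{K} \text{ and } \bigl(\mathcal{M}, w \not\models \phi \text{ or } \mathcal{M}, w' \models \phi \text{ for all } w' \text{ with } Rww'\bigr).
\]
I will also use the equivalence established before the definition of $\mathtt{HIU}$: $\mathcal{M}, w \models \neg I^{u}\bar{\phi}$ iff $t(\phi) \sqsubseteq \mathfrak{K}$.

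The cases of (CPL) and (S5$_{\Box}$) are standard, since $\Box$ is the global modality. The grasping-only principles follow directly from the equivalence for $\bar{\phi}$ and the definition of $t$ as a fusion over $Var(\cdot)$.
\begin{itemize}
\item (A4$_{I^{u}}$) holds because $t(\phi) \sqsubseteq \mathfrak{K}$ does not depend on the world.
\item (A5$_{I^{u}}$) holds because $t(\phi) \not\sqsubseteq \mathfrak{K}$ already satisfies the first disjunct of the clause for $I^{u}\phi$.
\item (A8$_{I^{u}}$) holds because $Var(\psi) \subseteq Var(\phi)$ gives $t(\psi) \sqsubseteq t(\phi)$.
\item (A2$_{I^{u}}$) holds because $\phi \vee \neg\phi$ has the same variables as $\phi$, hence the same topic, and is true at every successor.
\item (A1$_{I^{u}}$): once $t(\phi) \sqsubseteq \mathfrak{K}$, the truth of $I^{u}\phi$ must come from the second disjunct, which includes $\mathcal{M}, w \models \phi$.
\end{itemize}

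For the genuinely modal axioms, in each case I first use the grasping hypotheses to eliminate the topic disjunct, and then argue over the successors of $w$.
\begin{itemize}
\item (A3$_{I^{u}}$): the topics of $\phi$ and $\psi$ are grasped, so by the join-semilattice structure $t(\phi \wedge \psi) \sqsubseteq \mathfrak{K}$. If $\phi \wedge \psi$ fails at $w$ we are done. Otherwise both $\phi$ and $\psi$ hold at $w$, so by the hypotheses both hold at every successor, and hence so does $\phi \wedge \psi$.
\item (A6$_{I^{u}}$): since $t(\phi \vee \psi) = t(\phi) \oplus t(\psi) \sqsubseteq \mathfrak{K}$, the hypothesis $I^{u}(\phi \vee \psi)$ yields a successor where $\neg(\phi \vee \psi)$, and so $\neg\phi$, holds. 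Together with the truth of $\phi$ at $w$, this gives $I^{u}\phi$.
\item (A7$_{I^{u}}$): the antecedent yields $t(\psi) \sqsubseteq \mathfrak{K}$, because $Var(\psi) \subseteq Var(\phi \vee \psi)$. Suppose for contradiction that $\psi$ holds at $w$ and fails at some successor. Then $\phi \vee \psi$ and $\phi \rightarrow \psi$ are both true at $w$, so both hold at every successor, which forces $\psi$ at every successor. This is a contradiction.
\end{itemize}

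For the rules, (NEC$_{I^{u}}$) follows because a valid $\phi$ is true at every successor, so only the topic condition matters, and that is supplied by $\neg I^{u}\bar{\phi}$. For (RE$_{I^{u}}$), both topics are grasped by hypothesis. Validity of $\phi \leftrightarrow \psi$ then makes the world-relative parts of the two clauses coincide, namely ``false at $w$ or true at all successors''.

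I expect the main obstacle to be (A7$_{I^{u}}$). It is the only case that needs a small contradiction argument combining two hypotheses. It also requires noticing that the topic of $\psi$ is recovered from that of $\phi \vee \psi$ rather than being assumed separately. Elsewhere the only care needed is to keep track of the non-factive, disjunctive form of the clause for $I^{u}$, as opposed to the clause of the system \texttt{IU}.
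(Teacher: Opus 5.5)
Your proposal is correct and takes essentially the same route as the paper: an axiom-by-axiom semantic check, using the unpacked clause for $\neg I^{u}$ and the fact that $\neg I^{u}\bar{\phi}$ holds exactly when $t(\phi) \sqsubseteq \mathfrak{K}$. Your handling of (A6$_{I^{u}}$) and (A7$_{I^{u}}$) is, if anything, a bit more careful than the paper's, because you explicitly discharge the topic disjunct (for instance by obtaining $t(\psi) \sqsubseteq \mathfrak{K}$ from $t(\phi \vee \psi) \sqsubseteq \mathfrak{K}$), a step the paper's argument for (A7$_{I^{u}}$) leaves implicit.
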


\begin{proof}

The cases of (CPL) and (S5$_{\Box}$) are standard.

For the case of (A1$_{I^{u}}$), let $\mathcal{M}, w \models I^{u} \phi \wedge \neg I^{u}\bar{\phi}$ for an arbitrary $w \in \mathcal{M}$. Then, $t(\phi) \sqsubseteq \mathfrak{K}$, and thus $\mathcal{M}, w \models \phi$.

For the case of (A2$_{I^{u}}$), let $\mathcal{M}, w \models \neg I^{u} \bar{\phi}$, which means that $t(\phi) \sqsubseteq \mathfrak{K}$. Clearly, $\mathcal{M}, w' \models \phi \vee \neg \phi$ for any $w' \in \mathcal{M}$, and thus $\mathcal{M}, w \models \neg I^{u} (\phi \vee \neg \phi)$.

For the case of (A3$_{I^{u}}$), let $\mathcal{M}, w \models \neg I^{u} \phi \wedge \neg I^{u} \psi$ for an arbitrary $w \in \mathcal{M}$. This means that $t(\phi) \sqsubseteq \mathfrak{K}$, $t(\psi) \sqsubseteq \mathfrak{K}$, and either (i) $\mathcal{M}, w \not \models \phi$ or (ii) for all $w'$ s.t. $Rww'$, $\mathcal{M}, w' \models \phi$, and either (iii) $\mathcal{M}, w \not \models \psi$, or (iii) for all $w'$ s.t. $Rww'$, $\mathcal{M}, w' \models \psi$. If (i) or (iii), we have $\mathcal{M}, w \not \models \phi \wedge \psi$, and thus $\mathcal{M}, w \models \neg I^{u} (\phi \wedge \psi)$. If (ii) and (iv), then for all $w'$ s.t. $Rww'$, $\mathcal{M}, w' \models \phi \wedge \psi$, and thus $\mathcal{M}, w \models \neg I^{u} (\phi \wedge \psi)$.

For the case of (A4$_{I^{u}}$), let (i) $\mathcal{M}, w \models \neg I^{u} \bar{\phi}$ for an arbitrary $w \in \mathcal{M}$. Assume that (ii) $\mathcal{M}, w \not \models \Box \neg I^{u} \bar{\phi}$. From (i) we have that (iii) $t (\phi) \sqsubseteq \mathfrak{K}$. From (ii) we have that (iv) there exists $w' \in \mathcal{M}$ s.t. $\mathcal{M}, w' \models I^{u} \bar{\phi}$. From (iii), (iv) and the definition of $I^{u}$, we get that there exist $w'$ s.t. $\mathcal{M}, w' \models \neg \bar{\phi}$, which is a contradiction by construction of $\bar{\phi}$.

For the case of (A5$_{I^{u}}$), let $\mathcal{M}, w \models I^{u} \bar{\phi}$ for an arbitrary $w \in \mathcal{M}$. This means that $t(\phi) \not \sqsubseteq \mathfrak{K}$, and thus $\mathcal{M}, w \models I^{u} \phi$.

For the case of (A6$_{I^{u}}$), let $\mathcal{M}, w \models \phi \wedge \neg I^{u} \bar{\phi} \wedge \neg I^{u}\bar{\psi} \wedge I^{u}(\phi \vee \psi)$ for an arbitrary $w \in \mathcal{M}$. This means that (i) $\mathcal{M}, w \models \phi$, (ii) $t(\phi) \sqsubseteq \mathfrak{K}$, (iii) $t(\psi) \sqsubseteq \mathfrak{K}$, (iv) $\mathcal{M}, w \models \phi \vee \psi$, and (v) there exist $w'$ s.t. $Rww'$ and $\mathcal{M}, w' \not \models \phi$ and $\mathcal{M}, w' \not \models \psi$. Thus, from (i) and (v), we get $\mathcal{M}, w \models I^{u} \phi$.

For the case of (A7$_{I^{u}}$), let $\mathcal{M}, w \models \neg I^{u} (\phi \vee \psi) \wedge \neg I^{u}( \phi \rightarrow \psi)$ for an arbitrary $w \in \mathcal{M}$. This means that either (i) $\mathcal{M}, w \models \neg (\phi \vee \psi)$, or (ii) for all $w'$ if $Rww'$ then $\mathcal{M}, w' \models \phi \vee \psi$, and either (iii) $\mathcal{M}, w \models \neg (\phi \rightarrow \psi)$, or (iv) for all $w'$, if $Rww'$ then $\mathcal{M}, w' \models \phi \rightarrow \psi$. Assume that $\mathcal{M}, w \models I^{u} \psi$. Then, (v) $\mathcal{M}, w \models \psi$ and (vi) there exist $w''$ s.t.w $Rww''$ and $\mathcal{M}, w'' \models \neg \psi$. From (v), (i) and (iii) are not the cases. Thus, conditions (ii) and (iv) hold. From (vi) and (ii), we have (vii) $\mathcal{M}, w'' \models \phi$. From (vii) and (iv), we have $\mathcal{M}, w'' \models \psi$, which is a contradiction.

For the case of (A8$_{I^{u}}$), let $\mathcal{M}, w \models \neg I^{u} \bar{\phi}$ for an arbitrary $w \in \mathcal{M}$, which means that $t(\phi) \sqsubseteq \mathfrak{K}$. By definition of $\bar{\phi}$ and of $t$, this means that $t(\psi) \sqsubseteq \mathfrak{K}$ for any $\psi$ s.t. $Var(\psi) \subseteq Var (\phi) $, that is $\mathcal{M}, w \models \neg I^{u} \bar{\psi}$, if $Var(\psi) \subseteq Var(\phi)$.

For the case of (NEC$_{I^{u}}$), assume that (i) $\mathcal{M} \models \phi$ for an arbitrary $\mathcal{M}$ (i.e., $\phi$ is valid) and let (ii) $\mathcal{M}, w \models \neg I^{u} \bar{\phi}$ for an arbitrary $w \in \mathcal{M}$. From (ii) we have (iii) $t(\phi) \sqsubseteq \mathfrak{K}$. From (i) we have $\mathcal{M}, w \models \phi$ and for all $w'$ if $Rww'$ then $\mathcal{M}, w' \models \phi$. This means, taking into account (ii), that $\mathcal{M}, w \models \neg I^{u} \phi$. 

For the case of (RE$_{I^{u}}$), assume that (i) $\mathcal{M} \models \phi \leftrightarrow \psi$ for an arbitrary $\mathcal{M}$ and let (ii) $\mathcal{M}, w \models \neg I^{u} \bar{\phi} \wedge \neg I^{u} \bar{\psi}$ for an arbitrary $w \in \mathcal{M}$. From (ii) we have (iii) $t(\phi) \sqsubseteq \mathfrak{K}$ and (iv) $t(\psi) \sqsubseteq \mathfrak{K}$. Let (v) $\mathcal{M}, w \models \neg I^{u} \phi$. From (v) and (iii), we have either (vi) $\mathcal{M}, w \models \neg \phi$, or (vii) for all $w'$ if $Rww'$ then $\mathcal{M}, w' \models \phi$. If (vi) is the case, then from (i) and (vi) we have $\mathcal{M}, w \models \neg \psi$ and thus, from (iv), $\mathcal{M}, w \models \neg I^{u} \psi$. If (vii) is the case, then, from (i), for all $w'$ if $Rww'$ then $\mathcal{M}, w' \models \psi$, and thus $\mathcal{M}, w \models \neg I^{u} \psi$ (taking into account (iv)). The reasoning for the backwards direction is the same. Thus, $\mathcal{M}, w \models \neg I^{u} \leftrightarrow \neg I^{u} \psi$.

\end{proof}

\subsection{Completeness of \texttt{HIU}}
\label{appHIUcomplete}

For the completeness proof, we need the following proposition which is a generalization of (A7$_{I^{u}}$).

\begin{prop}
\label{genA7IU}

For all $k \geq 1:$

\begin{center}

$\vdash (\bigwedge^{k}_{j=1} \neg I^{u} ( \phi_{j} \vee \psi) \wedge  \neg I^{u} (\bigwedge^{k}_{j=1}\phi_{j} \rightarrow \psi)) \rightarrow \neg I^{u} \psi$

\end{center}

\end{prop}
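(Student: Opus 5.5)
The plan is to reduce the generalized statement to a single instance of (A7$_{I^{u}}$), mirroring the derivation of Proposition \ref{genA6IW}. Take $\phi := \bigwedge^{k}_{j=1}\phi_{j}$ in (A7$_{I^{u}}$). This instance gives
$$(\neg I^{u}(\textstyle\bigwedge^{k}_{j=1}\phi_{j} \vee \psi) \wedge \neg I^{u}(\bigwedge^{k}_{j=1}\phi_{j} \rightarrow \psi)) \rightarrow \neg I^{u}\psi .$$
The second conjunct of the antecedent already appears verbatim in the hypothesis. It therefore suffices to derive
$$\textstyle\bigwedge^{k}_{j=1}\neg I^{u}(\phi_{j}\vee\psi) \rightarrow \neg I^{u}(\bigwedge^{k}_{j=1}\phi_{j}\vee\psi)$$
and then chain the two implications by CPL.

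First, apply (A3$_{I^{u}}$) $k-1$ times. This yields $\bigwedge^{k}_{j=1}\neg I^{u}(\phi_{j}\vee\psi) \rightarrow \neg I^{u}\bigwedge^{k}_{j=1}(\phi_{j}\vee\psi)$.

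Next, note that by distributivity $\bigwedge^{k}_{j=1}(\phi_{j}\vee\psi) \leftrightarrow (\bigwedge^{k}_{j=1}\phi_{j}\vee\psi)$ is a tautology. Applying (RE$_{I^{u}}$) to it gives the biconditional $\neg I^{u}\bigwedge_{j}(\phi_{j}\vee\psi) \leftrightarrow \neg I^{u}(\bigwedge_{j}\phi_{j}\vee\psi)$. That biconditional is guarded by the grasping conditions $\neg I^{u}\overline{\bigwedge_{j}(\phi_{j}\vee\psi)}$ and $\neg I^{u}\overline{\bigwedge_{j}\phi_{j}\vee\psi}$.

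To discharge the guards, observe that both formulas have exactly the same propositional variables. Their bar-formulas are therefore literally the same conjunction of excluded-middle instances, up to ordering; if needed, invoke (A8$_{I^{u}}$) in both directions. Also, the contrapositive of (A5$_{I^{u}}$) gives $\neg I^{u}\chi \rightarrow \neg I^{u}\bar{\chi}$. Hence $\neg I^{u}\bigwedge_{j}(\phi_{j}\vee\psi)$ itself supplies both guards. This yields the conditional $\neg I^{u}\bigwedge_{j}(\phi_{j}\vee\psi)\rightarrow\neg I^{u}(\bigwedge_{j}\phi_{j}\vee\psi)$, exactly as in steps 5--10 of the proof of Proposition \ref{genA6IW}.

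Finally, combine these steps with the (A7$_{I^{u}}$) instance by transitivity and CPL. The case $k=1$ is just (A7$_{I^{u}}$).

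The main obstacle is the hyperintensional guard on (RE$_{I^{u}}$): unlike in \texttt{IU}, one cannot freely replace equivalents inside $I^{u}$. The key point is that the guard is obtained from the very formula being rewritten, via (A5$_{I^{u}}$) together with the identity of variable sets (using (A8$_{I^{u}}$) to pass between the two bar-formulas).
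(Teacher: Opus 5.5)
Your proposal is correct and follows essentially the same route as the paper's proof. Both use the instance of (A7$_{I^{u}}$) with $\phi := \bigwedge^{k}_{j=1}\phi_{j}$, iterate (A3$_{I^{u}}$) $k-1$ times, and apply (RE$_{I^{u}}$) to the distributivity tautology, discharging both grasping guards with the contrapositive of (A5$_{I^{u}}$) and the identity of variable sets. Your explicit appeal to (A8$_{I^{u}}$) in both directions is a slightly cleaner justification of the step the paper only asserts ``by definition of $\bar{\chi}$'', and you correctly cite (A5$_{I^{u}}$) where the paper's step 6 mistakenly writes (A5$_{I^{w}}$).
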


\begin{proof}

\

\begin{enumerate}

\item $(\neg I^{u} (\bigwedge^{k}_{j=1} \phi_{j} \vee \psi) \wedge \neg I^{u} (\bigwedge^{k}_{j=1} \phi_{j} \rightarrow \psi)) \rightarrow \neg I^{u} \psi$ (from (A7$_{I^{u}}$));

\item $\bigwedge^{k}_{j=1}  \neg I^{u}( \phi_{j} \vee \psi) \rightarrow \neg I^{u} \bigwedge^{k}_{j=1} (\phi_{j} \vee \psi)$ (from (A3$_{I^{u}}$) applied $k-1$ times);

\item $\bigwedge^{k}_{j=1} (\phi_{j} \vee \psi) \leftrightarrow (\bigwedge^{k}_{j=1} \phi_{j}  \vee \psi)$ (TAUT);

\item $(\neg I^{u} \overline{\bigwedge^{k}_{j=1} (\phi_{j} \vee \psi)} \wedge \neg I^{u} \overline{(\bigwedge^{k}_{j=1} \phi_{j}  \vee \psi)}) \rightarrow (\neg I^{u} \bigwedge^{k}_{j=1} (\phi_{j} \vee \psi) \leftrightarrow \neg I^{u} (\bigwedge^{k}_{j=1} \phi_{j}  \vee \psi))$ (from 3, by (RE$_{I^{u}}$));

\item $\neg I^{u} \overline{\bigwedge^{k}_{j=1} (\phi_{j} \vee \psi)} \leftrightarrow \neg I^{u} \overline{(\bigwedge^{k}_{j=1} \phi_{j}  \vee \psi)}$ (by definition of $\bar{\chi_{1}}$ and $\bar{\chi_{2}}$ for any $\chi_{1}$ and $\chi_{2}$ in which occur only the same propositional variables);

\item $\neg I^{u} \bigwedge^{k}_{j=1} (\phi_{j} \vee \psi) \rightarrow \neg I^{u} \overline{\bigwedge^{k}_{j=1} (\phi_{j} \vee \psi)}$ (from (A5$_{I^{w}}$) and contraposition);

\item $\neg I^{u} \bigwedge^{k}_{j=1} (\phi_{j} \vee \psi) \rightarrow (\neg I^{u} \overline{\bigwedge^{k}_{j=1} (\phi_{j} \vee \psi)} \wedge \neg I^{u} \overline{(\bigwedge^{k}_{j=1} \phi_{j}  \vee \psi))}$ (from 5, 6, by CPL);

\item $\neg I^{u} \bigwedge^{k}_{j=1} (\phi_{j} \vee \psi) \rightarrow  (\neg I^{u} \bigwedge^{k}_{j=1} (\phi_{j} \vee \psi) \leftrightarrow \neg I^{u} (\bigwedge^{k}_{j=1} \phi_{j}  \vee \psi))$ (from 4, 7, by transitivity);

\item $\neg I^{u} \bigwedge^{k}_{j=1} (\phi_{j} \vee \psi) \rightarrow \neg I^{u} (\bigwedge^{k}_{j=1} \phi_{j}  \vee \psi))$ (from 8 by CPL);

\item $\bigwedge^{k}_{j=1}  \neg I^{u}( \phi_{j} \vee \psi) \rightarrow \neg I^{u} (\bigwedge^{k}_{j=1} \phi_{j}  \vee \psi)$ (from 2, 9, by transitivity);

\item $\neg I^{u} (\bigwedge^{k}_{j=1} \phi_{j} \rightarrow \psi) \rightarrow \neg I^{u} (\bigwedge^{k}_{j=1} \phi_{j} \rightarrow \psi)$ (TAUT);

\item $(\bigwedge^{k}_{j=1}  \neg I^{u}( \phi_{j} \vee \psi) \wedge \neg I^{u} (\bigwedge^{k}_{j=1} \phi_{j}\rightarrow \psi)) \rightarrow (\neg I^{u} (\bigwedge^{k}_{j=1} \phi_{j} \vee \psi) \wedge \neg I^{u} (\bigwedge^{k}_{j=1} \phi_{j} \rightarrow \psi))$ (from 10, 11, by CPL);

\item $(\bigwedge^{k}_{j=1} \neg I^{u} ( \phi_{j} \vee \psi) \wedge  \neg I^{u} (\bigwedge^{k}_{j=1}\phi_{j} \rightarrow \psi)) \rightarrow \neg I^{u} \psi$ (from 1, 12, by transitivity).

\end{enumerate}

\end{proof}

We define maximal consistent sets as before, but now we consider them closed over the principles of $\mathtt{HIU}$. The Lindenbaum's lemma can be proved standardly.

\begin{lemma}[Lindenbaum's lemma]
\label{LindIU}

Every consistent set of $\mathtt{HIU}$ can be extended to a maximally consistent one.

\end{lemma}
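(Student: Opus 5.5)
The plan is the standard Lindenbaum construction, which only uses two facts about derivability in $\mathtt{HIU}$: it is finitary, and it satisfies the deduction theorem. For the deduction theorem, I will read $\Gamma \vdash \phi$ as the existence of theorems-plus-premises combined by Modus Ponens only. In other words, the rules (NEC$_{I^{u}}$), (RE$_{I^{u}}$), the S5 necessitation for $\Box$, and (US) apply only to theorems, not to arbitrary premises. With this reading, $\Gamma \cup \{\phi\} \vdash \psi$ iff $\Gamma \vdash \phi \rightarrow \psi$ follows by the usual induction on derivations, using (CPL). Finitariness is then immediate: $\Gamma \vdash \perp$ iff some finite $\Gamma' \subseteq \Gamma$ has $\vdash \bigwedge \Gamma' \rightarrow \perp$.

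Next, since $\mathtt{Prop}$ is countable, $\mathcal{L}_{I^{u}}$ is countable, so I fix an enumeration $\phi_{0}, \phi_{1}, \ldots$ of all formulas. Given a consistent $\Gamma$, I set $\Gamma_{0} := \Gamma$. At stage $n$, let $\Gamma_{n+1} := \Gamma_{n} \cup \{\phi_{n}\}$ if this is consistent, and $\Gamma_{n+1} := \Gamma_{n} \cup \{\neg \phi_{n}\}$ otherwise. Finally, let $\Gamma^{*} := \bigcup_{n} \Gamma_{n}$.

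The key step is showing that each $\Gamma_{n+1}$ is consistent. Suppose both $\Gamma_{n} \cup \{\phi_{n}\}$ and $\Gamma_{n} \cup \{\neg\phi_{n}\}$ were inconsistent. By the deduction theorem, $\Gamma_{n} \vdash \neg \phi_{n}$ and $\Gamma_{n} \vdash \neg\neg\phi_{n}$. Hence $\Gamma_{n} \vdash \perp$ by (CPL), contradicting the inductive hypothesis.

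Consistency of $\Gamma^{*}$ then follows from finitariness. Any finite subset of $\Gamma^{*}$ lies in some $\Gamma_{n}$, which is consistent.

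For maximality, take any $\Gamma' \supset \Gamma^{*}$ and pick $\phi \in \Gamma' \setminus \Gamma^{*}$, say $\phi = \phi_{n}$. Since $\phi_{n} \notin \Gamma^{*}$, the construction placed $\neg\phi_{n}$ in $\Gamma_{n+1} \subseteq \Gamma^{*} \subseteq \Gamma'$. So $\Gamma'$ contains both $\phi_{n}$ and $\neg\phi_{n}$, and is therefore inconsistent.

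The only point needing care is the deduction theorem, which is the main obstacle only in a bookkeeping sense. It requires that the rules (NEC$_{I^{u}}$), (RE$_{I^{u}}$), (US) and $\Box$-necessitation are not applied to hypotheses. Once this convention on $\vdash$ is fixed as above, nothing specific to the topic-sensitive axioms (A1$_{I^{u}}$)–(A8$_{I^{u}}$) is needed, and the argument is identical to that of Lemma \ref{LindIW}.
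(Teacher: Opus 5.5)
Your proof is correct and is precisely the standard Lindenbaum construction the paper appeals to when it says the lemma ``can be proved standardly''. Your remark that the deduction theorem requires (NEC$_{I^{u}}$), (RE$_{I^{u}}$) and $\Box$-necessitation to be applied only to theorems is exactly the right bookkeeping. Note that (US) is not actually among the rules of $\mathtt{HIU}$, though listing it does no harm. The one tacit assumption is that $\mathtt{Prop}$ is countable, which the paper never states; if it is not, the same finitariness argument goes through via Zorn's lemma or a transfinite enumeration.
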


As before, let $\mathcal{X}^{C}$ be the set of all $\mathtt{HIU}$-maximally consistent sets. For each $\Gamma \in \mathcal{X}^{C}$, we define:

\begin{itemize}

\item $\Gamma[I^{u}] := \{ \phi \mid  \neg I^{u} \bar{\psi} \rightarrow \neg I^{u}(\phi \vee \psi) \in \Gamma$ for any $\psi \in \Gamma\}$.

\end{itemize}

The canonical model is now defined as follows.

\begin{defi}[Canonical Model for $\Gamma_{0}$]
\label{canmodIU}

Given a maximal consistent set $\Gamma_{0}$ of $\texttt{HIW}$, the canonical model for $\Gamma_{0}$ is a tuple $\mathcal{M}^{C} = \{W^{C}, R^{C}, v^{C}, \mathcal{T}^{C}\}$, where 

\begin{itemize}

\item $W^{C} = \{ \Gamma \in \mathcal{X}^{C} \mid \Gamma_{0} \sim \Gamma\}$;

\item $R^{C} \subseteq W^{C} \times W^{C}$ s.t. for all $\Gamma, \Delta \in W^{C}$, 

\begin{center}

$\Gamma R^{C} \Delta$ iff $\Gamma[I^{u}] \subseteq \Delta$;

\end{center}

\item $v^{C}(p) = \{\Gamma \in W^{C} \mid p \in \Gamma\}$;

\item $\mathcal{T}^{C}$ is such that

\begin{itemize}

\item $T^{C}= \{a, b\}$ where $a = \{p \in \mathtt{Prop} \mid I^{u} \bar{p} \in \Gamma_{0}\}$ and $b = \{ p \in \mathtt{Prop} \mid \neg I^{u} \bar{p} \in \Gamma_{0}\}$;

\item $\oplus^{C} : T^{C} \times T^{C} \mapsto T^{C}$ s.t. $a \oplus^{C} a = a$, $b \oplus^{C} b = b$, $a \oplus^{C} b = b \oplus^{C} a = a$;

\item $\mathfrak{K}^{C} = b$;

\item $t^{C}: \mathtt{Prop} \mapsto T^{C}$ s.t. for all $c \in T^{C}$ and $p \in \mathtt{Prop} : t^{C}(p) = c$ iff $p \in c$, and $t^{C}$ extends to the whole language by $t^{C}(\phi) = \oplus^{C}\{t^{C}(p) \mid p \in Var(\phi)\}$.

\end{itemize}

\end{itemize}

\end{defi}

\begin{prop}

For all $\phi \in \mathcal{L}_{I^{u}}$ and for any $\Gamma \in W^{C}: \neg I^{u} \bar{\phi} \in \Gamma$ iff $\forall p \in Var(\phi): \neg I^{u} \bar{p} \in \Gamma$. 

\end{prop}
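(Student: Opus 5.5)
The plan is to mirror the proof of Proposition \ref{useful1} for \texttt{HIW}, replacing each \texttt{HIW} axiom by its \texttt{HIU} counterpart. Both directions use only the maximality and consistency of $\Gamma$ and the definition $\bar{\phi} := \bigwedge_{x \in Var(\phi)} (x \vee \neg x)$.

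For the direction $(\Rightarrow)$, assume $\neg I^{u} \bar{\phi} \in \Gamma$ and fix $p \in Var(\phi)$. Since $Var(p) = \{p\} \subseteq Var(\phi)$, the instance $\neg I^{u} \bar{\phi} \rightarrow \neg I^{u} \bar{p}$ of (A8$_{I^{u}}$) is a theorem and hence belongs to $\Gamma$. Closure of $\Gamma$ under Modus Ponens then gives $\neg I^{u} \bar{p} \in \Gamma$.

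For the direction $(\Leftarrow)$, let $Var(\phi) = \{p_{1}, \ldots, p_{n}\}$, so that $\bar{\phi}$ is, up to the order and bracketing of conjuncts, $\bar{p_{1}} \wedge \ldots \wedge \bar{p_{n}}$. Assume $\neg I^{u} \bar{p_{i}} \in \Gamma$ for each $i$. Then $\bigwedge_{i \leq n} \neg I^{u} \bar{p_{i}} \in \Gamma$. Applying (A3$_{I^{u}}$) $n-1$ times, grouping the conjuncts so as to match the bracketing of $\bar{\phi}$, yields $\neg I^{u} \bigwedge_{i \leq n} \bar{p_{i}} \in \Gamma$. This is $\neg I^{u} \bar{\phi} \in \Gamma$ as soon as the conjunction is read exactly as $\bar{\phi}$.

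The main obstacle is that last identification. If one does not stipulate a fixed ordering and bracketing of the conjuncts in $\bar{\phi}$, the formula obtained from (A3$_{I^{u}}$) may differ syntactically from $\bar{\phi}$. The two formulas are still propositionally equivalent and contain the same variables, so I would bridge the gap with (RE$_{I^{u}}$). Write $\chi := \bigwedge_{i \leq n} \bar{p_{i}}$, so that $\vdash \chi \leftrightarrow \bar{\phi}$. Instantiating (RE$_{I^{u}}$) with $\chi$ and $\bar{\phi}$ requires $\neg I^{u} \bar{\chi} \wedge \neg I^{u} \bar{\bar{\phi}}$. Since $\bar{\chi}$ and $\bar{\bar{\phi}}$ are built over the same variables as $\bar{\phi}$, I would fix $\bar{\cdot}$ canonically (for instance, ordering variables by a fixed enumeration of $\mathtt{Prop}$) so that $\bar{\chi} = \bar{\bar{\phi}} = \bar{\phi}$. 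This is the same convention the paper already invokes in step 6 of Proposition \ref{genA6IW}. It also makes the (RE$_{I^{u}}$) detour unnecessary, since then $\chi$ literally is $\bar{\phi}$. I would adopt that convention explicitly, after which the proof is immediate.
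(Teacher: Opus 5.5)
Your proof is correct and follows the paper's argument: (A8$_{I^{u}}$) instantiated with $\psi := p$ for $(\Rightarrow)$, and (A3$_{I^{u}}$) iterated over the $\neg I^{u}\bar{p_{i}}$ for $(\Leftarrow)$. Your explicit convention, that $\bar{\cdot}$ depends only on $Var(\cdot)$ with a fixed ordering and bracketing, is the one the paper relies on without stating it (e.g.\ in step 6 of Proposition \ref{genA6IW}). You are also right to drop the (RE$_{I^{u}}$) detour, because under that convention its side condition $\neg I^{u}\bar{\chi} \wedge \neg I^{u}\bar{\bar{\phi}}$ is literally the goal $\neg I^{u}\bar{\phi}$.
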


\begin{proof}
\label{useful3}

The proof follows closely the proof of Lemma 9 \citep{Ozgun2023}.

\begin{itemize}

\item[$(\Rightarrow)$] Let $\neg I^{u} \bar{\phi} \in \Gamma$. Then, by (A8$_{I^{u}}$), $\neg I^{u} \overline{p} \in \Gamma$ for all $p \in Var(\phi)$.

\item[$(\Leftarrow)$] Let $Var(\phi) = \{p_{1}, ..., p_{n}\}$. Then, $\bar{\phi} = \bar{p_{1}} \wedge ... \wedge \bar{p_{n}}$. Let $\neg I^{u} \bar{p_{i}} \in \Gamma$ for all $p_{i} \in \{p_{1}, ..., p_{n}\}$. Then, $\bigwedge_{i \leq n} \neg I^{u} \bar{p_{i}} \in \Gamma$, because $\Gamma$ is a maximal consistent set. By (A3$_{I^{u}}$), $\neg I^{u} \bigwedge_{i \leq n} \bar{p_{i}} \in \Gamma$, that is $\neg I^{u} \bar{\phi} \in \Gamma$.

\end{itemize}

\end{proof}

\begin{prop}
\label{useful4}

Given a canonical topic-sensitive model $\mathcal{M}^{C} = \{W^{C}, R^{C}, v^{C}, \mathcal{T}^{C}\}$, for any $\Gamma \in W^{C}$, and $\phi \in \mathcal{L}_{I^{u}}$:

\begin{center}

$\neg I^{u} \bar{\phi} \in \Gamma$ iff $t^{C}(\phi) \sqsubseteq \mathfrak{K}^{C}$.

\end{center}

\end{prop}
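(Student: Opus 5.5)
The plan is to mirror Corollary 10 of \citep{Ozgun2023} (as was done for Proposition \ref{notIwtopic}), reducing the claim to the variable-wise characterization in the preceding proposition and then transferring membership between $\Gamma$ and $\Gamma_{0}$ by means of (A4$_{I^{u}}$).

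First I will establish an auxiliary fact. For every $\Gamma \in W^{C}$ and every propositional variable $p$, we have $\neg I^{u}\bar{p} \in \Gamma$ iff $\neg I^{u}\bar{p} \in \Gamma_{0}$.

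\begin{itemize}
\item For the left-to-right direction, suppose $\neg I^{u}\bar{p} \in \Gamma$. By (A4$_{I^{u}}$), $\Box \neg I^{u}\bar{p} \in \Gamma$, so $\neg I^{u}\bar{p} \in \Gamma[\Box]$. Since $\Gamma_{0} \sim \Gamma$ and $\sim$ is an equivalence relation, $\Gamma \sim \Gamma_{0}$, i.e.\ $\Gamma[\Box] \subseteq \Gamma_{0}$. Hence $\neg I^{u}\bar{p} \in \Gamma_{0}$.
\item For the right-to-left direction, the argument is symmetric: from $\neg I^{u}\bar{p} \in \Gamma_{0}$ we get $\Box\neg I^{u}\bar{p} \in \Gamma_{0}$, and $\Gamma_{0}[\Box] \subseteq \Gamma$ gives $\neg I^{u}\bar{p} \in \Gamma$.
\end{itemize}

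This is exactly what ensures that the topic assignment, which is defined from $\Gamma_{0}$ alone, is correct at every world of $W^{C}$.

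Second, I unpack the canonical topic model. By construction, $t^{C}(p)=b=\mathfrak{K}^{C}$ iff $\neg I^{u}\bar{p}\in\Gamma_{0}$, and $t^{C}(p)=a$ otherwise, i.e.\ iff $I^{u}\bar{p}\in\Gamma_{0}$. Maximality of $\Gamma_{0}$ makes $a$ and $b$ partition $\mathtt{Prop}$, so $t^{C}$ is well defined. The fusion table makes $b \sqsubset a$, so $t^{C}(\phi)=\oplus^{C}\{t^{C}(p)\mid p\in Var(\phi)\}$ equals $b$ exactly when every $p\in Var(\phi)$ has topic $b$. Hence $t^{C}(\phi)\sqsubseteq\mathfrak{K}^{C}$ iff $\neg I^{u}\bar{p}\in\Gamma_{0}$ for all $p\in Var(\phi)$. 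One caveat should be checked: if $a$ or $b$ is empty, $T^{C}$ degenerates. I would treat $a,b$ as formal labels so that $T^{C}$ remains a two-element join semilattice.

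Third, I chain the equivalences. By the preceding proposition (the analogue of Lemma 9, using (A8$_{I^{u}}$) and (A3$_{I^{u}}$)), $\neg I^{u}\bar\phi\in\Gamma$ iff $\neg I^{u}\bar p\in\Gamma$ for all $p\in Var(\phi)$. By the auxiliary fact this holds iff $\neg I^{u}\bar p\in\Gamma_{0}$ for all such $p$, and by the second step this holds iff $t^{C}(\phi)\sqsubseteq\mathfrak{K}^{C}$.

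The main obstacle is the transfer between $\Gamma$ and $\Gamma_{0}$. It depends on (A4$_{I^{u}}$) together with the symmetry of $\sim$, which comes from the S5 axioms for $\Box$ via Proposition \ref{equivIW}, adapted to \texttt{HIU}. The remaining steps are bookkeeping.
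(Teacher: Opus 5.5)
Your proposal is correct and is essentially the paper's argument. The paper simply transposes Corollary 10 of Rossi and \"{O}zg\"{u}n: reduce to propositional variables via the preceding proposition, move $\neg I^{u}\bar{p}$ between $\Gamma$ and $\Gamma_{0}$ using (A4$_{I^{u}}$) and the S5 equivalence $\sim$, and read off $t^{C}(\phi)\sqsubseteq\mathfrak{K}^{C}$ from the two-element canonical topic model. Your degeneracy caveat is unnecessary, since $a$ and $b$ partition the nonempty set $\mathtt{Prop}$ and are therefore distinct even if one of them is empty.
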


\begin{proof}
The proof is the same as of Corollary 10 \citep{Ozgun2023}. We just replace $K$ by $\neg I^{u}$, Lemma 9 by Proposition \ref{useful3}, and Ax3$_{K}$ by (A4$_{I^{u}}$).

\end{proof}

\begin{lemma}[Truth Lemma]
\label{TLIU}

Let $\mathcal{M}^{C} = \{W^{C}, R^{C}, v^{C}, \mathcal{T}^{C}\}$ be the canonical model for $\Gamma_{0}$. Then, for all $\phi \in \mathcal{L}_{I^{u}}$ and $\Gamma \in W^{C}$, we have $\mathcal{M}^{C}, \Gamma \models \phi$ iff $\phi \in \Gamma$.

\end{lemma}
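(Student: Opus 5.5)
The Truth Lemma (Lemma \ref{TLIU}) will be proved by induction on the complexity of $\phi$. The cases for propositional variables and the Boolean connectives are standard. The case of $\Box$ is handled exactly as in Lemma \ref{truthIW}: the domain $W^{C}$ is the $\sim$-class of $\Gamma_{0}$, and Proposition \ref{equivIW} (whose proof transfers verbatim to $\mathtt{HIU}$, since $\Box$ is S5) gives the usual existence argument. The only nontrivial case is $\phi := I^{u}\psi$. Throughout, I split on whether $t^{C}(\psi)\sqsubseteq \mathfrak{K}^{C}$, using Proposition \ref{useful4} to translate this into $\neg I^{u}\bar{\psi}\in\Gamma$.

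\emph{($\Leftarrow$) direction.} Assume $I^{u}\psi\in\Gamma$. If $t^{C}(\psi)\not\sqsubseteq\mathfrak{K}^{C}$, the semantic clause immediately gives $\mathcal{M}^{C},\Gamma\models I^{u}\psi$. Otherwise $\neg I^{u}\bar{\psi}\in\Gamma$, so (A1$_{I^{u}}$) yields $\psi\in\Gamma$, and by the induction hypothesis $\mathcal{M}^{C},\Gamma\models\psi$. It remains to find $\Delta$ with $\Gamma R^{C}\Delta$ and $\neg\psi\in\Delta$. For this I will show that $\Gamma[I^{u}]\cup\{\neg\psi\}$ is consistent and then apply Lemma \ref{LindIU}. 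The resulting $\Delta$ must also lie in $W^{C}$; for this I would add $\Gamma[\Box]$ to the set and check consistency jointly, using (A4$_{I^{u}}$)-style $\Box$-closure.

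Suppose the set is inconsistent. Then $\vdash \bigwedge_{j}\phi_{j}\rightarrow\psi$ for some $\phi_{1},\dots,\phi_{k}\in\Gamma[I^{u}]$. Reading the definition of $\Gamma[I^{u}]$ with $\psi$ itself as the witness in $\Gamma$, and using $\neg I^{u}\bar{\psi}\in\Gamma$, we obtain $\neg I^{u}(\phi_{j}\vee\psi)\in\Gamma$ for each $j$. Next, to apply (NEC$_{I^{u}}$) to $\bigwedge_j\phi_j\rightarrow\psi$, we need grasping of its topic. This follows from $\neg I^{u}(\phi_{j}\vee\psi)\in\Gamma$ by contraposed (A5$_{I^{u}}$), then (A8$_{I^{u}}$) to descend to variables, then Proposition \ref{useful3} to reassemble. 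That gives $\neg I^{u}(\bigwedge_{j}\phi_{j}\rightarrow\psi)\in\Gamma$. Proposition \ref{genA7IU} then delivers $\neg I^{u}\psi\in\Gamma$, contradicting $I^{u}\psi\in\Gamma$.

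\emph{($\Rightarrow$) direction.} Assume $\mathcal{M}^{C},\Gamma\models I^{u}\psi$. If $t^{C}(\psi)\not\sqsubseteq\mathfrak{K}^{C}$, then $I^{u}\bar{\psi}\in\Gamma$ by Proposition \ref{useful4}, and (A5$_{I^{u}}$) gives $I^{u}\psi\in\Gamma$. Otherwise we have $\psi\in\Gamma$ and some $\Delta$ with $\Gamma R^{C}\Delta$ and $\neg\psi\in\Delta$. Suppose for contradiction that $\neg I^{u}\psi\in\Gamma$. I aim to show $\psi\in\Gamma[I^{u}]$, which forces $\psi\in\Delta$ and yields the contradiction. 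This requires showing $\neg I^{u}\bar{\chi}\rightarrow\neg I^{u}(\psi\vee\chi)\in\Gamma$ for every $\chi\in\Gamma$. So assume $\neg I^{u}\bar{\chi}\in\Gamma$. If $I^{u}(\psi\vee\chi)\in\Gamma$, then (A6$_{I^{u}}$), applied with $\psi\in\Gamma$, $\neg I^{u}\bar\psi$ and $\neg I^{u}\bar\chi$, gives $I^{u}\psi\in\Gamma$, which is a contradiction.

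\emph{Expected obstacle.} The main difficulty is the precise reading of $\Gamma[I^{u}]$, in particular the role of the quantified witness $\psi\in\Gamma$. One must make sure that the witness chosen in the ($\Leftarrow$) direction is legitimate, and that the membership argument in the ($\Rightarrow$) direction really covers every witness $\chi\in\Gamma$. The other delicate point is securing that the Lindenbaum extension stays inside the $\sim$-class of $\Gamma_{0}$. If the definition as written proves too weak, I would first try reading it with the witness fixed to the formula under consideration.
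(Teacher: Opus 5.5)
Apart from one step, your argument coincides with the paper's proof. It uses the same case split via Proposition~\ref{useful4} and (A6$_{I^{u}}$) for ($\Rightarrow$), and for ($\Leftarrow$) the witness $\chi=\psi$, (NEC$_{I^{u}}$) and Proposition~\ref{genA7IU}; your derivation of the grasping premise through (A5$_{I^{u}}$), (A8$_{I^{u}}$) and reassembly is more explicit than the paper's. The step that fails is the one you flagged and then waved through: placing the Lindenbaum extension $\Delta$ inside $W^{C}$. The paper simply invokes Lemma~\ref{LindIU} without checking this, and your proposed repair cannot be carried out in $\mathtt{HIU}$. If $\Gamma[\Box]\cup\Gamma[I^{u}]\cup\{\neg\psi\}$ is inconsistent, you only obtain $\vdash\beta\wedge\bigwedge_{j}\phi_{j}\rightarrow\psi$ with $\Box\beta\in\Gamma$. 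Then $\bigwedge_{j}\phi_{j}\rightarrow\psi$ is no longer a theorem, so (NEC$_{I^{u}}$) does not apply. No principle of $\mathtt{HIU}$ lets you pass from $\Box\theta$ to $\neg I^{u}\theta$; (A4$_{I^{u}}$) only concerns formulas $\bar{\phi}$, i.e.\ grasping.

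This is not a missing trick: the lemma as stated is false. Evaluate $\Box$ over an equivalence relation $E$ that $R$ may cross, instead of globally; every axiom and rule of $\mathtt{HIU}$ stays sound. Now take $W=\{w,u\}$, $E$ the identity, $R=\{(w,u)\}$, $v(p)=\{w\}$, $t(p)=\mathfrak{K}$. In this model $\Box p\wedge\neg I^{u}\bar{p}\wedge I^{u}p$ holds at $w$, so this set is $\mathtt{HIU}$-consistent; let $\Gamma_{0}$ extend it. Then $t^{C}(p)\sqsubseteq\mathfrak{K}^{C}$, and every $\Delta\in W^{C}$ contains $p$ (since $p\in\Gamma_{0}[\Box]$). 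Hence $\mathcal{M}^{C},\Gamma_{0}\not\models I^{u}p$ although $I^{u}p\in\Gamma_{0}$. Equivalently, $(\Box p\wedge\neg I^{u}\bar{p})\rightarrow\neg I^{u}p$ is valid for the global $\Box$ but underivable, so $\mathtt{HIU}$ as axiomatized is incomplete. The missing ingredient is an interaction axiom such as $(\Box\phi\wedge\neg I^{u}\bar{\phi})\rightarrow\neg I^{u}\phi$, which is sound and subsumes (NEC$_{I^{u}}$). With it, your joint-consistency plan goes through. From $\Box\beta\in\Gamma$ and $\vdash\beta\wedge\bigwedge_{j}\phi_{j}\rightarrow\psi$ you get $\Box(\bigwedge_{j}\phi_{j}\rightarrow\psi)\in\Gamma$. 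Together with the grasping premise you already derived, the new axiom yields $\neg I^{u}(\bigwedge_{j}\phi_{j}\rightarrow\psi)\in\Gamma$. Proposition~\ref{genA7IU} then gives $\neg I^{u}\psi\in\Gamma$, a contradiction.
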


\begin{proof}

The proof is by induction on the complexity of $\phi$. The cases for propositional variables, boolean operators and $\Box$ are standard. The case of $\phi:= I^{u} \psi$ is as follows.

\begin{itemize}

\item[($\Rightarrow$)] Assume $\mathcal{M}^{C}, \Gamma \models I^{u} \psi$. This means that either $t^{C}(\psi) \not \sqsubseteq \mathfrak{K}^{C}$, or there exist $\Gamma'$ such that $\Gamma R^{C} \Gamma'$ and $\mathcal{M}^{C}, \Gamma' \models \neg \psi$. If $t^{C}(\psi) \not \sqsubseteq \mathfrak{K}^{C}$, then by Proposition \ref{useful4}, $\neg I^{u} \bar{\psi} \not \in \Gamma$, that is $I^{u} \bar{\psi} \in \Gamma$, and thus by (A5$_{I^{u}}$), $I^{u} \psi \in \Gamma$.

Let $t^{C}(\psi) \sqsubseteq \mathfrak{K}^{C}$. By induction hypothesis we have (i) $\psi \in \Gamma$ and (ii) $\neg \psi \in \Gamma'$. By definition of $\Gamma R^{C} \Gamma'$, this means that there exists $\chi \in \Gamma$, s.t. $\neg I^{u} \bar{\chi} \rightarrow \neg I^{u}(\psi \vee \chi) \not \in \Gamma$, and thus (iii) $\neg I^{u}\bar{\chi} \in \Gamma$ and (iv) $I^{u}(\psi \vee \chi) \in \Gamma$. From $t^{C}(\psi) \sqsubseteq \mathfrak{K}^{C}$, by Proposition \ref{useful4}, we have (v) $\neg I^{u} \bar{\psi} \in \Gamma$. From (i), (v), (iii), and (iv), by (A6$_{I^{u}}$), we have $I^{u} \psi \in \Gamma$.

\item[($\Leftarrow$)] Assume that $I^{u} \psi \in \Gamma$. If $t^{C}(\psi) \not \sqsubseteq \mathfrak{K}^{C}$, then, by the definition of $I^{u}$, we have $\mathcal{M}^{C}, \Gamma \models I^{u} \psi$. If $t^{C}(\psi) \sqsubseteq \mathfrak{K}^{C}$, then, by Proposition \ref{useful4}, we have that $\neg I^{u} \bar{\psi} \in \Gamma$. By (A1$_{I^{u}}$) we have $\psi \in \Gamma$. Then, we show that there is $\Gamma' \in W^{C}$ s.t. $\Gamma R^{C} \Gamma'$ and $\neg \psi \in \Gamma'$. This contributes to show that:

$\{\chi \mid \neg I^{u}\bar{\phi} \rightarrow \neg I^{u}(\chi \vee \phi) \in \Gamma$ for any $
\phi \in \Gamma \} \cup \{\neg \psi\}$ is consistent.

Assume that it is not consistent. Thus, (i) there exist $\chi_{1}, ..., \chi_{n}$ s.t. $\vdash (\chi_{1} \wedge ... \wedge \chi_{n}) \rightarrow \psi$ and (ii) $\neg I^{u} \bar{\phi} \rightarrow \neg I^{u}(\chi_{k} \vee \phi) \in \Gamma$ for all $k \in [1, n]$, for any $\phi \in \Gamma$. From $\neg I^{u} \bar{\psi} \in \Gamma$, $\psi \in \Gamma$ (from (A1$_{I^{u}}$)), and (ii), we have $\neg I^{u}(\chi_{k} \vee \psi) \in \Gamma$. By definition of the canonical model and Proposition \ref{useful4}, we have $\neg I^{u} \bar{\chi_{k}} \in \Gamma$. Then, by (NEC$_{I^{u}}$), we have $\neg I^{u} ((\chi_{1} \wedge ... \wedge \chi_{n}) \rightarrow \psi) \in \Gamma$. By Proposition \ref{genA7IU}, we have $\neg I^{u} \psi \in \Gamma$, which is a contradiction. Thus, by the definition of $R^{C}$, and Lemma \ref{LindIU}, we conclude that there exist $\Gamma'$ s.t. $\Gamma R^{C} \Gamma'$ and $\neg \psi \in \Gamma'$. 

By induction hypothesis and the definition of $I^{u}$, this means that $\mathcal{M}^{C}, \Gamma \models I^{u} \psi$.

\end{itemize}

\end{proof}

\begin{theor}

The system $\mathtt{HIU}$ is complete.

\end{theor}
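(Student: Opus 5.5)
The plan is the standard contrapositive argument via the canonical model, mirroring the proof of Theorem \ref{theorHIWcomplete}. Assume $\not\vdash^{\mathtt{HIU}} \phi$. Then $\{\neg\phi\}$ is $\mathtt{HIU}$-consistent, since otherwise $\vdash \neg\neg\phi$ and hence $\vdash\phi$ by (CPL). By Lemma \ref{LindIU}, I extend $\{\neg\phi\}$ to a maximal consistent set $\Gamma_0$. Because $\Gamma_0$ is consistent, $\phi\notin\Gamma_0$.

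Next I form the canonical model $\mathcal{M}^C$ for $\Gamma_0$ as in Definition \ref{canmodIU}. Before using it, I check that it really is a topic-sensitive model.

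\begin{itemize}
\item $W^C$ is non-empty. It contains $\Gamma_0$, because $\sim$ is reflexive: $\Box\chi\to\chi$ is an (S5$_{\Box}$) axiom, so $\Gamma_0[\Box]\subseteq\Gamma_0$.
\item $\oplus^C$ is idempotent, commutative and associative on $\{a,b\}$. It is simply the join with $b\sqsubseteq a$.
\item $t^C$ is well defined. By maximality, every $p$ satisfies exactly one of $I^u\bar p\in\Gamma_0$ and $\neg I^u\bar p\in\Gamma_0$, so $a$ and $b$ partition $\mathtt{Prop}$.
\end{itemize}

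There is a degenerate case in which $a$ or $b$ is empty. I handle it by reading $T^C$ abstractly as a two-element semilattice, with each $p$ assigned according to which condition it satisfies. Nothing in the semantics requires both topics to be inhabited.

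Then I apply the Truth Lemma (Lemma \ref{TLIU}) at $\Gamma_0\in W^C$. Since $\phi\notin\Gamma_0$, it gives $\mathcal{M}^C,\Gamma_0\not\models\phi$. So $\phi$ is not valid in the class of topic-sensitive models, and contraposition yields completeness.

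The main obstacle is not this final assembly but ensuring that the Truth Lemma actually holds for every $\Gamma\in W^C$, not just for $\Gamma_0$. The topics are fixed using $\Gamma_0$, while Proposition \ref{useful4} is stated for arbitrary $\Gamma$. I would justify the transfer with (A4$_{I^u}$) together with Proposition \ref{equivIW}:
\begin{itemize}
\item If $\neg I^u\bar p\in\Gamma_0$, then $\Box\neg I^u\bar p\in\Gamma_0$, so $\neg I^u\bar p\in\Gamma$ for every $\Gamma\sim\Gamma_0$.
\item Conversely, if $I^u\bar p\in\Gamma_0$ but $\neg I^u\bar p\in\Gamma$, then $\Box\neg I^u\bar p\in\Gamma$. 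Hence $\neg\Box\neg I^u\bar p\notin\Gamma$, and since $\Gamma[\Box]=\Gamma_0[\Box]$ this forces $\neg I^u\bar p\in\Gamma_0$, a contradiction.
\end{itemize}
With this uniformity of grasping across $W^C$, Propositions \ref{useful3} and \ref{useful4} apply at every world. The inductive case for $I^u$ in Lemma \ref{TLIU} then goes through, and the completeness argument above is complete.
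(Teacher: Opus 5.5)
Your final assembly is the same as the paper's, and your check that grasping is uniform across $W^{C}$ is correct; it is exactly what Proposition \ref{useful4} provides. The gap is the claim that the $I^{u}$ case of Lemma \ref{TLIU} ``then goes through''. In its $(\Leftarrow)$ direction you need a witness $\Gamma'\in W^{C}$ with $\Gamma R^{C}\Gamma'$ and $\neg\psi\in\Gamma'$. Lindenbaum's lemma applied to $\Gamma[I^{u}]\cup\{\neg\psi\}$ only gives some maximal consistent set. Nothing guarantees $\Gamma[\Box]\subseteq\Gamma'$, i.e.\ that $\Gamma'$ lies in $W^{C}$, which is the only domain of $R^{C}$. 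Adding $\Gamma[\Box]$ to the set does not help. Apart from (A4$_{I^{u}}$), which concerns only grasping, no axiom of $\mathtt{HIU}$ relates $\Box$ to $I^{u}$, so that enlarged set can be inconsistent. The paper's proof of the Truth Lemma contains the same unjustified step.

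In fact this step fails, and so does the theorem as stated. The formula $(\Box p\wedge\neg I^{u}\bar{p})\to\neg I^{u}p$ is valid on topic-sensitive models, because the global $\Box$ reaches every $R$-successor. It is not a theorem of $\mathtt{HIU}$. Evaluate $\Box\phi$ simply as $\phi$, i.e.\ use the identity relation instead of the universal one. Then (S5$_{\Box}$) and (A4$_{I^{u}}$) become trivial, and every other axiom and rule stays sound by the arguments of Appendix \ref{appHIUsound}, which never use that $R$-successors lie in the range of $\Box$. Yet the model with $W=\{w,w'\}$, $R=\{(w,w')\}$, $v(p)=\{w\}$ and $t(p)=\mathfrak{K}$ falsifies the formula at $w$. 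Hence there is a maximal consistent $\Gamma_{0}$ containing $\Box p$, $\neg I^{u}\bar{p}$ and $I^{u}p$. Every member of $W^{C}$ contains $p$, so $\Gamma_{0}$ has no $R^{C}$-successor refuting $p$, and $\mathcal{M}^{C},\Gamma_{0}\not\models I^{u}p$ although $I^{u}p\in\Gamma_{0}$.

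A completeness result needs an interaction axiom such as $(\Box\phi\wedge\neg I^{u}\bar{\phi})\to\neg I^{u}\phi$, with witnesses built from $\Gamma[I^{u}]\cup\Gamma[\Box]\cup\{\neg\psi\}$. Suppose this set is inconsistent, so $\vdash(\bigwedge_{k}\chi_{k}\wedge\beta)\to\psi$ with $\Box\beta\in\Gamma$. Then $\Box(\bigwedge_{k}\chi_{k}\to\psi)\in\Gamma$. Its topic is grasped, as in the paper's argument, so the new axiom gives $\neg I^{u}(\bigwedge_{k}\chi_{k}\to\psi)\in\Gamma$. Proposition \ref{genA7IU} then yields the contradiction $\neg I^{u}\psi\in\Gamma$; if no $\chi_{k}$ is used, $\Box\psi\in\Gamma$ gives it directly. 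The resulting witness satisfies $\Gamma\sim\Gamma'$, so it lies in $W^{C}$.
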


\begin{proof}

Let $\not \vdash \phi$. Then $\{\neg \phi\}$ is consistent. By Lemma \ref{LindIU}, there exists a maximal consistent set $\Gamma_{0}$ s.t. $\phi \not \in \Gamma_{0}$. By Lemma \ref{TLIU}, we have $\mathcal{M}^{C}, \Gamma_{0} \not \models \phi$.

\end{proof}

\section{Proofs for system \texttt{HDI}}

\subsection{Soundness of \texttt{HDI}}
\label{appHDIsound}

\begin{theor}

The system $\mathtt{HDI}$ is sound.

\end{theor}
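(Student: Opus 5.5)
We prove soundness of $\mathtt{HDI}$ as for $\mathtt{HIW}$ and $\mathtt{HIU}$: check that every axiom is valid in every topic-sensitive model and that Modus Ponens and (HIR) preserve validity. (CPL) and (S5$_{\Box}$) are standard, since $\Box$ is the global modality. The grasping axioms (G1)--(G4) follow from topic-transparency. $Var(\neg\phi)=Var(I^{d}\phi)=Var(\Box\phi)=Var(\phi)$ gives $t(\neg\phi)=t(I^{d}\phi)=t(\Box\phi)=t(\phi)$, so (G1)--(G3) are immediate. For (G4) we have $t(\phi\wedge\psi)=t(\phi)\oplus t(\psi)$, and in a join semilattice $a\oplus b\sqsubseteq \mathfrak{K}$ iff $a\sqsubseteq\mathfrak{K}$ and $b\sqsubseteq\mathfrak{K}$. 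Both directions use idempotence, commutativity and associativity: from $a\oplus\mathfrak{K}=\mathfrak{K}$ and $b\oplus\mathfrak{K}=\mathfrak{K}$ we get $(a\oplus b)\oplus\mathfrak{K}=\mathfrak{K}$, and conversely $a\sqsubseteq a\oplus b$. (G5) holds because the truth of $G\phi$ does not depend on the world, so if it holds at $w$ it holds at every $w'$.

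Next we check the $I^{d}$ axioms. (A1$_{I^{d}}$) and (A3$_{I^{d}}$) are just two conjuncts of the truth clause for $I^{d}$. For (A2$_{I^{d}}$), suppose $I^{d}\phi$ and $I^{d}\psi$ hold at $w$. Then $t(\phi),t(\psi)\sqsubseteq\mathfrak{K}$, so $t(\phi\vee\psi)=t(\phi)\oplus t(\psi)\sqsubseteq\mathfrak{K}$ by the semilattice fact above. Also $\phi\vee\psi$ holds at $w$, and every $w'\neq w$ with $Rww'$ satisfies $\neg\phi$ and $\neg\psi$, hence $\neg(\phi\vee\psi)$.

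The rule (HIR) is the only step requiring some care, and I expect it to be the main obstacle, though a mild one. Assume $\phi\rightarrow\psi$ is valid. Fix $w$ at which $\phi$, $I^{d}\psi$ and $G\phi$ hold. From $G\phi$ we get $t(\phi)\sqsubseteq\mathfrak{K}$, and $\phi$ holds at $w$. For any $w'\neq w$ with $Rww'$, $I^{d}\psi$ gives $\neg\psi$ at $w'$. Validity of $\phi\rightarrow\psi$, used in the same model at $w'$ in contrapositive form, then gives $\neg\phi$ at $w'$. Hence $I^{d}\phi$ holds at $w$. The topic condition on $\phi$ is exactly why the premise $G\phi$ is needed: nothing in $I^{d}\psi$ controls $t(\phi)$. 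Modus Ponens preserves validity as usual, and soundness follows by induction on the length of derivations.
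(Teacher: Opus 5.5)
Your proof is correct and follows the paper's route: a direct axiom-by-axiom check of validity in topic-sensitive models, where the rule (HIR) is handled by applying the validity of $\phi\rightarrow\psi$ at the accessible worlds $w'\neq w$. The differences are cosmetic: you argue (HIR) directly rather than by contradiction, and you spell out the join-semilattice fact behind (G4) and (A2$_{I^{d}}$), which the paper simply calls straightforward.
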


\begin{proof}

The cases of (CPL) and (S5$_{\Box}$) and standard.

The cases of (G1) - (G4) are straightforward from the definition of $G$ and the definition of topic model.

For the case of (G5), let $\mathcal{M}, w \models G \phi$ for an  arbitrary $w \in \mathcal{M}$. This means that $t(\phi) \sqsubseteq \mathfrak{K}$. Assume that $\mathcal{M}, w \not \models \Box G \phi$, that is there exists $w' \in \mathcal{M}$ s.t. $\mathcal{M}, w' \models \neg G \phi$, that is $t(\phi) \not \sqsubseteq \mathfrak{K}$, contradiction.

For the case of the rule (HIR), let (i) $\mathcal{M} \models \phi \rightarrow \psi$ for an arbitrary $\mathcal{M}$ (i.e., $\phi \rightarrow \psi$ is valid) and assume that (ii) $\mathcal{M}, w \models \phi$, (iii) $\mathcal{M}, w \models I \psi$, and  (iv) $\mathcal{M}, w \models G \phi$ for an arbitrary $w \in \mathcal{M}$. From (iv), we have (v) $t(\phi) \sqsubseteq \mathfrak{K}$. Let (vi) $\mathcal{M}, w \models \neg I^{d} \phi$. Then, either $t(\phi) \not \sqsubseteq \mathfrak{K}$ (which contradicts (v)), or $\mathcal{M}, w \models \neg \phi$ (which contradicts (ii)), or (viii) there exists $w'$ s.t. $w \not = w'$, $Rww'$ and $\mathcal{M}, w' \models \phi$. From (viii) and (i), we have that (ix) there exists $w'$ s.t. $w \not = w'$, $Rww'$ and $\mathcal{M}, w' \models \psi$ From (iii) we have that for all $w''$ s.t. $w\not = w''$, if $Rww''$ then $\mathcal{M}, w'' \models \neg \psi$, which contradicts to (ix). 

For the case of (A1$_{I^{d}}$), let $\mathcal{M}, w \models I^{d} \phi$ for an arbitrary $w \in \mathcal{M}$. By the definition of $I^{d}$ this means that $\mathcal{M}, w \models \phi$.

For the case of (A2$_{I^{d}}$), let $\mathcal{M}, w \models I^{d} \phi \wedge I^{d} \psi$ for an arbitrary $w \in \mathcal{M}$. This means that $t(\phi) \sqsubseteq \mathfrak{K}$, $t(\psi) \sqsubseteq \mathfrak{K}$ (and thus $t(\phi \vee \psi) \sqsubseteq \mathfrak{K}$), $\mathcal{M}, w \models \phi$, $\mathcal{M}, w \models \psi$ (and thus $\mathcal{M}, w \models \phi \vee \psi$), and for all $w'$ s.t. $w \not = w'$, if $Rww'$ then $\mathcal{M}, w' \models \phi$ and $\mathcal{M}, w'  \models \psi$ (and thus $\mathcal{M}, w' \models \phi \vee \psi$). Thus, $\mathcal{M}, w \models I^{d} (\phi \vee \psi)$.

For the case of (A3$_I^{d}$), let $\mathcal{M}, w \models I^{d} \phi$ for an arbitrary $\mathcal{M}$. Then, by definition of $I^{d}$, $t(\phi) \sqsubseteq \mathfrak{K}$, which means that $\mathcal{M}, w \models G \phi$.


\end{proof}

\subsection{Completeness of \texttt{HDI}}
\label{appHDIcomplete}

For the completeness proof, we need the following proposition which is a generalization of (A2$_{I^{d}}$). The proof of the proposition is straightforward from (A2$_{I^{d}}$).

\begin{prop}
\label{useful7}

For all $k \geq 1$:

\begin{center}

$\bigwedge^{k}_{j=1} I^{d} \phi_{j} \rightarrow I^{d} \bigvee^{k}_{j=1} \phi_{j}$

\end{center}

\end{prop}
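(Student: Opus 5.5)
The claim $\vdash \bigwedge^{k}_{j=1} I^{d} \phi_{j} \rightarrow I^{d} \bigvee^{k}_{j=1} \phi_{j}$ will be proved by induction on $k$, using only (A2$_{I^{d}}$) and classical propositional reasoning. We read $\bigvee^{k}_{j=1}\phi_j$ as the left-nested disjunction $(\dots(\phi_1 \vee \phi_2)\vee \dots)\vee\phi_k$, and likewise for the conjunction. This avoids any appeal to replacement of equivalents inside $I^{d}$, which is not available in \texttt{HDI} (only (HIR) gives a restricted form of it).

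For the base case $k=1$, the formula is $I^{d}\phi_1 \rightarrow I^{d}\phi_1$, an instance of a classical tautology, so it is given by (CPL). For the inductive step, suppose $\vdash \bigwedge^{k}_{j=1} I^{d}\phi_j \rightarrow I^{d}\bigvee^{k}_{j=1}\phi_j$. By the conventions on nesting, $\bigwedge^{k+1}_{j=1} I^{d}\phi_j$ is $\bigwedge^{k}_{j=1} I^{d}\phi_j \wedge I^{d}\phi_{k+1}$. The induction hypothesis, by CPL, yields
\begin{center}
$\vdash \bigwedge^{k+1}_{j=1} I^{d}\phi_j \rightarrow (I^{d}\bigvee^{k}_{j=1}\phi_j \wedge I^{d}\phi_{k+1})$.
\end{center}
The instance of (A2$_{I^{d}}$) with $\phi := \bigvee^{k}_{j=1}\phi_j$ and $\psi := \phi_{k+1}$ gives
\begin{center}
$\vdash (I^{d}\bigvee^{k}_{j=1}\phi_j \wedge I^{d}\phi_{k+1}) \rightarrow I^{d}(\bigvee^{k}_{j=1}\phi_j \vee \phi_{k+1})$.
\end{center}
The consequent of this instance is literally $I^{d}\bigvee^{k+1}_{j=1}\phi_j$, so chaining the two implications by transitivity (CPL) closes the induction.

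The only delicate point is the bracketing. If one wanted the result for an arbitrarily bracketed or reordered disjunction, one would need $I^{d}$ to respect propositional equivalence. That is not a theorem of \texttt{HDI} in general, since (HIR) requires the side conditions $\phi$ and $G\phi$. This is why I would fix the left-nested convention at the outset, so that each step is a direct substitution instance of (A2$_{I^{d}}$) and no rewriting under $I^{d}$ is ever needed. With that convention in place the proof is entirely routine.
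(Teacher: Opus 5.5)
Your proof is correct and is essentially the paper's argument: the paper only remarks that the proposition is straightforward from (A2$_{I^{d}}$), and your induction on $k$ is the routine derivation it leaves implicit. Fixing the left-nested bracketing so that each step is a literal instance of (A2$_{I^{d}}$) is a sensible choice; it is a convenience rather than a necessity, since for disjunctions with the same variables, rebracketing under $I^{d}$ could also be derived from (HIR), (A1$_{I^{d}}$), (A3$_{I^{d}}$), (G1) and (G4).
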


We define maximal consistent sets as before, but now we consider them closed over the principles of $\mathtt{HDI}$. The Lindenbaum's lemma can be proved standardly.

\begin{lemma}[Lindenbaum's Lemma]
\label{LindID}

Every consistent set of $\mathtt{HDI}$ can be extended to a maximally consistent one.

\end{lemma}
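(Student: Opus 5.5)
The statement is Lindenbaum's Lemma for $\mathtt{HDI}$. I will prove it by the standard enumeration construction, which works because derivability in $\mathtt{HDI}$ is finitary. The rules (MP), the $S5_{\Box}$ necessitation rule and (HIR) apply only to theorems, so they never act on hypotheses. Hence $\Gamma \vdash \phi$ iff there are finitely many $\gamma_1,\dots,\gamma_n \in \Gamma$ with $\vdash (\gamma_1\wedge\dots\wedge\gamma_n)\rightarrow\phi$. This gives the deduction theorem and compactness of consistency for free.

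Let $\Gamma$ be consistent. The language $\mathcal{L}_{I^{d},G}$ is countable, since it is built from the countable set $\mathtt{Prop}$ by finitely many finitary constructors, so fix an enumeration $\phi_0,\phi_1,\dots$ of it. Set $\Gamma_0=\Gamma$, and let $\Gamma_{n+1}=\Gamma_n\cup\{\phi_n\}$ if this is consistent and $\Gamma_{n+1}=\Gamma_n\cup\{\neg\phi_n\}$ otherwise. Put $\Gamma^{*}=\bigcup_n\Gamma_n$.

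Each $\Gamma_n$ is consistent, by induction on $n$. Suppose both $\Gamma_n\cup\{\phi_n\}$ and $\Gamma_n\cup\{\neg\phi_n\}$ were inconsistent. By the deduction theorem, $\Gamma_n\vdash\neg\phi_n$ and $\Gamma_n\vdash\neg\neg\phi_n$, and then (CPL) yields $\Gamma_n\vdash\perp$, which is a contradiction.

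$\Gamma^{*}$ is consistent by finitariness: any derivation of $\perp$ uses only finitely many premises, which all lie in some $\Gamma_n$.

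$\Gamma^{*}$ is maximal. Let $\Gamma^{*}\subset\Gamma'$ and pick $\phi_n\in\Gamma'\setminus\Gamma^{*}$. Since $\phi_n\notin\Gamma^{*}$, the construction put $\neg\phi_n$ into $\Gamma_{n+1}\subseteq\Gamma^{*}\subseteq\Gamma'$. So $\Gamma'$ contains both $\phi_n$ and $\neg\phi_n$, and is therefore inconsistent.

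The only point needing care is the finitary character of derivations together with the deduction theorem. I would verify this explicitly by noting that (HIR) and necessitation for $\Box$ are rules of proof, closing only the set of theorems, and not rules of inference from assumptions. With that settled, the remainder is routine.
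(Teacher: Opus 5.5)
Your proof is correct and is the standard enumeration argument the paper relies on when it says the lemma ``can be proved standardly''. Your maximality step also matches the paper's definition of maximality, namely that no proper superset is consistent. One small wording fix: Modus Ponens is the one rule that \emph{may} be applied to hypotheses, while necessitation for $\Box$ and (HIR) are the rules restricted to theorems. It is this distinction that yields your finitary characterization $\Gamma\vdash\phi$ iff $\vdash(\gamma_1\wedge\dots\wedge\gamma_n)\rightarrow\phi$ for some $\gamma_i\in\Gamma$. You also tacitly assume $\mathtt{Prop}$ is countable; if it is not, Zorn's lemma replaces the enumeration.
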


As before, let $\mathcal{X}^{C}$ be the set of all $\mathtt{HDI}$-maximally consistent sets. For each $\Gamma \in \mathcal{X}^{C}$, we define:

\begin{itemize}

\item $\Gamma[I^{d}] := \{ \neg \phi \mid I^{d} \phi \in \Gamma\}$.

\end{itemize}

The definition of $\Gamma[I^{d}]$ is the one proposed by \citet{Gilbert2021}.

We will also need the following proposition.

\begin{prop}
\label{useful8}

Let $\Gamma$ be a $\mathtt{HDI}$ maximally consistent set. If $I^{d} \phi \not \in \Gamma$,  $G\phi \in \Gamma$, and $\phi \in \Gamma$, then $\{\phi\} \cap \Gamma[I^{d}]$ is consistent. If $\Gamma[I^{d}]$ is non-empty, then $\{\phi\} \cap \Gamma[I^{d}] \subseteq \Gamma'$ for some $\mathtt{HDI}$-maximally consistent set $\Gamma'$ such that $\Gamma \not = \Gamma'$.

\end{prop}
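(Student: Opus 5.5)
Read the statement with $\cup$ in place of $\cap$: the claim is that $\{\phi\} \cup \Gamma[I^{d}]$ is consistent and, when $\Gamma[I^{d}] \neq \emptyset$, extends to a maximally consistent set different from $\Gamma$. With $\cap$ the claim is trivial, and the canonical construction needs the union. The plan is a reductio for the consistency part, using Proposition \ref{useful7} and the rule (HIR), followed by Lemma \ref{LindID} and factivity (A1$_{I^{d}}$) for the second part.

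\emph{Consistency.} If $\Gamma[I^{d}] = \emptyset$, the set is just $\{\phi\}$. It is consistent because $\phi \in \Gamma$ and $\Gamma$ is consistent.

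Otherwise, suppose $\{\phi\} \cup \Gamma[I^{d}]$ is inconsistent. By compactness of derivations there are finitely many formulas $\phi_{1}, \dots, \phi_{k}$ with $I^{d}\phi_{j} \in \Gamma$ such that $\vdash \phi \rightarrow \neg(\neg\phi_{1} \wedge \dots \wedge \neg\phi_{k})$. By (CPL) this gives $\vdash \phi \rightarrow \bigvee^{k}_{j=1}\phi_{j}$. If $\phi$ alone is inconsistent we already have a contradiction with $\phi \in \Gamma$, so we may take $k \geq 1$.

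Set $\psi := \bigvee^{k}_{j=1}\phi_{j}$. Since $\Gamma$ is closed under conjunction, $\bigwedge_{j} I^{d}\phi_{j} \in \Gamma$, and Proposition \ref{useful7} gives $I^{d}\psi \in \Gamma$.

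Now apply (HIR) to the theorem $\phi \rightarrow \psi$. This yields $\vdash \phi \rightarrow (I^{d}\psi \rightarrow (G\phi \rightarrow I^{d}\phi))$. We have $\phi \in \Gamma$, $I^{d}\psi \in \Gamma$ and $G\phi \in \Gamma$, so closure of $\Gamma$ under Modus Ponens gives $I^{d}\phi \in \Gamma$. This contradicts the hypothesis $I^{d}\phi \notin \Gamma$.

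\emph{Extension.} By Lemma \ref{LindID}, $\{\phi\} \cup \Gamma[I^{d}]$ extends to a maximally consistent set $\Gamma'$. To see that $\Gamma' \neq \Gamma$, use non-emptiness of $\Gamma[I^{d}]$: pick $\neg\chi \in \Gamma[I^{d}]$, so that $I^{d}\chi \in \Gamma$. By (A1$_{I^{d}}$) we get $\chi \in \Gamma$. But $\neg\chi \in \Gamma'$, so $\chi \notin \Gamma'$ by consistency, and therefore $\Gamma \neq \Gamma'$.

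The step I expect to be the main obstacle is the reduction of inconsistency to the single theorem $\phi \rightarrow \bigvee_{j}\phi_{j}$, in the exact form that (HIR) accepts. The point is that (HIR) applies only to theorems, and its grasping side-condition $G\phi$ concerns $\phi$, not $\psi$; this is exactly why the hypothesis $G\phi \in \Gamma$ is needed.

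If the canonical model also requires $\Gamma' \sim \Gamma$, one would add $\Gamma[\Box]$ to the set before extending. Then the refutation produces $\vdash (\beta \wedge \phi) \rightarrow \psi$ with $\Box\beta \in \Gamma$, and one must additionally use (S5$_{\Box}$) and (G4) to recover $G(\beta \wedge \phi)$. I would treat that as a separate refinement rather than part of the present statement.
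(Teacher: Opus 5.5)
Your proof is correct and is essentially the paper's own argument: a reductio via a finite refutation $\vdash\phi\rightarrow\bigvee_{j}\phi_{j}$, then Proposition~\ref{useful7} and (HIR) discharged by $G\phi\in\Gamma$, then Lindenbaum and (A1$_{I^{d}}$) to get $\Gamma'\neq\Gamma$, with $\cap$ read as $\cup$ exactly as the paper intends.

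Your closing aside is too optimistic, however. By (G4), $G(\beta\wedge\phi)\in\Gamma$ holds only when $G\beta\in\Gamma$, and in fact no such refinement is possible: take any maximal $\Gamma\supseteq\{\Box(p\rightarrow q),p,Gp,I^{d}q,\neg I^{d}p\}$, a set that is \texttt{HDI}-consistent because every \texttt{HDI} principle stays sound when $\Box$ is read over an arbitrary equivalence relation instead of globally. For this $\Gamma$ the set $\{p\}\cup\Gamma[I^{d}]\cup\Gamma[\Box]$ is inconsistent, so the $W^{C}$-membership issue you noticed is a genuine gap in the paper's completeness argument, not a refinement of this proposition.
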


\begin{proof}

Assume that $\{\phi\} \cap \Gamma[I^{d}]$ is inconsistent. Then, $\Gamma[I^{d}]$ is non-empty. Otherwise $\{\phi\} \cap \Gamma[I^{d}] = \{\phi\}$, and since $\phi \in \Gamma$, $\{\phi\} \cap \Gamma[I^{d}]$ would be consistent. Then, we have $\vdash \neg (\phi \wedge \neg \psi_{1} \wedge ... \wedge \neg \psi_{n})$ where $\neg \psi_{k} \in \Gamma[I^{d}]$ for $k \in [1,n]$. Then, $\vdash \phi \rightarrow (\psi_{1} \vee ... \vee \psi_{n})$. From (HIR), $\vdash \phi \rightarrow (I^{d}(( \psi_{1} \vee ... \vee \psi_{n}) \rightarrow (G \phi \rightarrow I^{d} \phi)))$, and thus $I^{d}(( \psi_{1} \vee ... \vee \psi_{n}) \rightarrow (G \phi \rightarrow I^{d} \phi)) \in \Gamma$ (because $\phi \in \Gamma$). From the fact that $I^{d} \psi_{i} \in \Gamma$ for each $\psi_{i}$, we have $I^{d} \psi_{1} \wedge ... \wedge I^{d} \psi_{n} \in \Gamma$. By Proposition \ref{useful7}, $I^{d}(\psi_{1} \vee ... \vee \psi_{n}) \in \Gamma$. By having $G\phi \in \Gamma$, we obtain $I^{d} \phi \in \Gamma$, which contradicts the consistency of $\Gamma$.

Since $\{\phi\} \cap \Gamma[I^{d}]$ is consistent, it is contained in some $\mathtt{HDI}$ maximally consistent set $\Gamma'$. Let $\Gamma[I^{d}]$ be non-empty. Then, there is a formula $\psi$ s.t. $\neg \psi \in \Gamma[I^{d}]$ and thus $I^{d} \psi \in \Gamma$. By (A1$_{I^{d}}$), $\psi \in \Gamma$ while $\neg \psi \in \Gamma'$. Thus, $\Gamma \not = \Gamma'$.

\end{proof}

The canonical model is now defined as follows.

\begin{defi}[Canonical Model for $\Gamma_{0}$]
\label{canmodID}

Given a maximal consistent set $\Gamma_{0}$ of $\texttt{HIW}$, the canonical model for $\Gamma_{0}$ is a tuple $\mathcal{M}^{C} = \{W^{C}, R^{C}, v^{C}, \mathcal{T}^{C}\}$, where 

\begin{itemize}

\item $W^{C} = \{ \Gamma \in \mathcal{X}^{C} \mid \Gamma_{0} \sim \Gamma\}$;

\item $R^{C} \subseteq W^{C} \times W^{C}$ s.t. for all $\Gamma, \Delta \in W^{C}$, 

\begin{center}

$\Gamma R^{C} \Delta$ iff $\Gamma[I^{d}] \subseteq \Delta$;

\end{center}

\item $v^{C}(p) = \{\Gamma \in W^{C} \mid p \in \Gamma\}$;

\item $\mathcal{T}^{C}$ is such that

\begin{itemize}

\item $T^{C}= \{a, b\}$ where $a = \{p \in \mathtt{Prop} \mid G p \in \Gamma_{0}\}$ and $b = \{ p \in \mathtt{Prop} \mid \neg G p \in \Gamma_{0}\}$;

\item $\oplus^{C} : T^{C} \times T^{C} \mapsto T^{C}$ s.t. $a \oplus^{C} a = a$, $b \oplus^{C} b = b$, $a \oplus^{C} b = b \oplus^{C} a = a$;

\item $\mathfrak{K}^{C} = b$;

\item $t^{C}: \mathtt{Prop} \mapsto T^{C}$ s.t. for all $c \in T^{C}$ and $p \in \mathtt{Prop} : t^{C}(p) = c$ iff $p \in c$, and $t^{C}$ extends to the whole language by $t^{C}(\phi) = \oplus^{C}\{t^{C}(p) \mid p \in Var(\phi)\}$.

\end{itemize}

\end{itemize}

\end{defi}

\begin{prop}
\label{useful5}

For all $\phi \in \mathcal{L}_{I^{d}, G}$ and for any $\Gamma \in W^{C}: G \phi \in \Gamma$ iff $\forall p \in Var(\phi): G p \in \Gamma$.

\end{prop}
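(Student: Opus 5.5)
The plan is to argue by induction on the complexity of $\phi$, using the topic-transparency axioms (G1)--(G4). In the \texttt{HIW} and \texttt{HIU} analogues (Proposition \ref{useful1}), the analogous result was obtained by a short argument via (A7$_{I^{w}}$) and (A3$_{I^{w}}$). Here, however, $G$ is primitive and no axiom directly relates $G\phi$ to $G\bar{\phi}$ or to $G$ of a subformula with fewer variables. The natural route is therefore to push $G$ through the connectives one at a time.

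The base case $\phi = p$ is trivial, since $Var(p) = \{p\}$.

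For the inductive steps:
\begin{itemize}
\item If $\phi = \neg \psi$, then (G1) gives $G\neg\psi \in \Gamma$ iff $G\psi \in \Gamma$. Since $Var(\neg\psi) = Var(\psi)$, the induction hypothesis closes the case.
\item If $\phi = I^{d}\psi$, use (G2) in the same way.
\item If $\phi = \Box\psi$, use (G3) in the same way.
\item If $\phi = G\psi$, some care is needed, since (G1)--(G4) do not mention $G G\psi$ explicitly. My first try would be to observe that this case is not covered by the listed axioms. Semantically, $t(G\psi)=t(\psi)$ should hold by topic-transparency, and the remark after the (G1)--(G4) axioms says they ``define the grasping operator as topic-transparent''. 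So I would either read the axioms as intended to extend to $G$ itself (treating $G\psi$ like the other transparent operators), or, if that is not allowed, restrict the claim to $G$-free $\phi$. I expect this case to be the main obstacle and would flag it explicitly.
\item If $\phi = \psi \wedge \chi$, axiom (G4) together with the maximality of $\Gamma$ gives $G(\psi\wedge\chi)\in\Gamma$ iff $G\psi\in\Gamma$ and $G\chi\in\Gamma$. By the induction hypothesis, this holds iff $Gp\in\Gamma$ for all $p \in Var(\psi)\cup Var(\chi) = Var(\psi\wedge\chi)$.
\end{itemize}
Since the remaining connectives are defined, this exhausts the primitive cases.

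Each step uses only that $\Gamma$ is maximal consistent, so that a biconditional theorem transfers membership, and that $Var$ commutes with the connectives in the obvious way. No appeal to the canonical relation or to the topic structure of $\mathcal{T}^{C}$ is needed. Membership of $\Gamma$ in $W^{C}$ plays no role beyond $\Gamma$ being maximal consistent.
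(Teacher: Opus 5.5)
Your induction is correct for $G$-free formulas, and it is what the paper's left-to-right direction compresses into ``by (G1)--(G4)''. The converse is where you differ. The paper asserts that $\phi\vee\neg\phi$ ``is'' $\bigwedge_{p\in Var(\phi)}(p\vee\neg p)$. From the hypothesis it then derives $G\bar{\phi}\in\Gamma$ via (G1) and (G4), reads this as $G(\phi\vee\neg\phi)\in\Gamma$, and strips off the disjunct. But $\bar{\phi}$ and $\phi\vee\neg\phi$ are different formulas, and \texttt{HDI} has no rule letting $G$ cross a provable equivalence. So that middle step needs exactly the structural induction you give. Your version proves a biconditional at each stage, gets both directions at once, and never needs $\bar{\phi}$; it is the cleaner proof.

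Your worry about $\phi=G\psi$ is justified and should be upgraded from a caveat to a counterexample. It is a defect of the statement, which the paper's proof passes over silently.

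Reinterpret $G$ and $I^{d}$ using a topic assignment $\tau$ with these properties: it agrees with $t$ on variables, satisfies $\tau(\neg\psi)=\tau(I^{d}\psi)=\tau(\Box\psi)=\tau(\psi)$ and $\tau(\psi\wedge\chi)=\tau(\psi)\oplus\tau(\chi)$, but sets $\tau(G\psi):=c$ for a fixed topic $c$ of the model. Soundness of (G1)--(G5), (HIR) and (A1$_{I^{d}}$)--(A3$_{I^{d}}$) uses only three facts: these commutation properties, that $\oplus$ is a join, and that the $G$-clause is world-independent. So every \texttt{HDI} theorem stays valid under $\tau$.

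Now take $c=\mathfrak{K}$ in a model with $t(p)\not\sqsubseteq\mathfrak{K}$. There $GGp\wedge\neg Gp$ is true, so it is \texttt{HDI}-consistent. A maximal consistent $\Gamma$ containing it (take $\Gamma_{0}:=\Gamma$) refutes the left-to-right direction for $\phi=Gp$. Taking $c\not\sqsubseteq\mathfrak{K}$ and $t(p)\sqsubseteq\mathfrak{K}$ refutes the converse in the same way.

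So the right repair is one of your two options, but neither is a mere ``reading'' of the axioms. You can restrict to $G$-free $\phi$. Alternatively, you can add $G\phi\leftrightarrow GG\phi$ as an explicit new axiom; your induction then goes through with one extra case, handled like (G1)--(G3).

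The problem reaches beyond this proposition. Since $t(Gp)=t(p)$, the formula $GGp\leftrightarrow Gp$ is valid in the paper's semantics but not derivable. Hence \texttt{HDI} as stated is incomplete, and the canonical Truth Lemma, whose $G$-clause rests on this proposition, cannot hold for both $Gp$ and $GGp$.
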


\begin{proof}

\begin{itemize}

\item[$(\Rightarrow)$] Let $G \phi \in \Gamma$. By (G1) -- (G4), we have $Gp \in \Gamma$ for all $p \in Var(\phi)$.

\item[$(\Leftarrow)$] Let $G p \in \Gamma$ for all $p \in Var(\phi)$. Consider formula $\phi \vee \neg \phi$, that is $\bigwedge_{p \in Var(\phi)} (p \vee \neg p)$. By (G1), (G4), and our assumption, $G \bigwedge_{p \in Var(\phi)} (p \vee \neg p) \in \Gamma$, that is $G (\phi \vee \neg \phi) \in \Gamma$, that is, by (G1) and (G4), $G \phi \in \Gamma$.

\end{itemize}

\end{proof}

\begin{prop}
\label{useful6}

Given a canonical topic-sensitive model $\mathcal{M}^{C} = \{W^{C}, R^{C}, v^{C}, \mathcal{T}^{C}\}$, for any $\Gamma \in W^{C}$, and $\phi \in \mathcal{L}_{I^{d}, G}$:

\begin{center}

$G \phi$ iff $t^{C}(\phi) \sqsubseteq \mathfrak{K}^{C}$.

\end{center}

\end{prop}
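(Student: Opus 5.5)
The plan is to reduce the claim to the atomic case with Proposition \ref{useful5}, and then move from $\Gamma_{0}$ to an arbitrary $\Gamma \in W^{C}$ using (G5) together with the S5 behaviour of $\Box$.

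\textbf{Reading of the canonical topic model.} I read Definition \ref{canmodID} the way its \texttt{HIW} and \texttt{HIU} counterparts are meant. The designated topic $\mathfrak{K}^{C}$ is the class of variables that are grasped according to $\Gamma_{0}$, i.e.\ those with $Gp \in \Gamma_{0}$. The other class is the top element of the two-element semilattice, since $a \oplus^{C} b = a$. In the definition as printed the letters $a$ and $b$ are swapped relative to this, and with them taken literally the claim fails. I therefore fix the labelling so that $\mathfrak{K}^{C}$ is the bottom element, collecting exactly the $p$ with $Gp \in \Gamma_{0}$.

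\textbf{Step 1: topics in the canonical model.} Since $\mathfrak{K}^{C}$ is the bottom, $t^{C}(\phi) \sqsubseteq \mathfrak{K}^{C}$ holds iff $t^{C}(\phi) = \mathfrak{K}^{C}$. A fusion of elements of the two-element chain equals the bottom iff every fusand is the bottom. So $t^{C}(\phi) \sqsubseteq \mathfrak{K}^{C}$ iff $Gp \in \Gamma_{0}$ for every $p \in Var(\phi)$. Every formula contains at least one variable, since the language has no propositional constants, so the fusion is never over an empty set.

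\textbf{Step 2: from $\Gamma_{0}$ to arbitrary $\Gamma \in W^{C}$.} I show that for each $p$, $Gp \in \Gamma_{0}$ iff $Gp \in \Gamma$.
\begin{itemize}
\item Left to right: if $Gp \in \Gamma_{0}$, then (G5) gives $\Box Gp \in \Gamma_{0}$. Hence $Gp \in \Gamma_{0}[\Box] \subseteq \Gamma$, because $\Gamma_{0} \sim \Gamma$.
\item Right to left: I derive $\neg Gp \rightarrow \Box \neg Gp$ in S5. From (G5) we get $\Diamond Gp \rightarrow \Diamond \Box Gp$. Axiom 5 gives $\Diamond \Box Gp \rightarrow \Box Gp$, and T gives $\Box Gp \rightarrow Gp$, so $\Diamond Gp \rightarrow Gp$; contraposition yields the claim. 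Thus if $\neg Gp \in \Gamma_{0}$, then $\neg Gp \in \Gamma$ by the same $\sim$ argument, and consistency of $\Gamma$ gives $Gp \notin \Gamma$.
\end{itemize}

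\textbf{Step 3: assembling.} Steps 1 and 2 give $t^{C}(\phi) \sqsubseteq \mathfrak{K}^{C}$ iff $Gp \in \Gamma$ for all $p \in Var(\phi)$. By Proposition \ref{useful5}, the latter holds iff $G\phi \in \Gamma$, which is the claim.

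The only delicate points are the labelling issue in Definition \ref{canmodID} and the S5 derivation in Step 2 that makes grasping world-independent inside $W^{C}$. Everything else is bookkeeping with the two-element semilattice.
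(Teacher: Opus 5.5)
Your route is the paper's own. Its proof transplants Corollary~10 of \citet{Ozgun2023}, with Proposition~\ref{useful5} in place of their Lemma~9 and (G5) in place of Ax3$_{K}$; that is exactly your Steps~2--3. Your relabelling of Definition~\ref{canmodID} is correct. As printed, $a=\{p\mid Gp\in\Gamma_{0}\}$ is the top element ($a\oplus^{C}b=a$) and $\mathfrak{K}^{C}=b$, so $t^{C}(p)\sqsubseteq\mathfrak{K}^{C}$ would hold exactly when $Gp\notin\Gamma_{0}$. The \texttt{HIW} and \texttt{HIU} models, where $b$ collects the grasped variables, show what was intended. Your S5 derivation for the right-to-left half of Step~2 is fine, though applying symmetry of $\sim$ and then (G5) at $\Gamma$ is shorter.

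The step that does not go through, for you and for the paper alike, is the appeal to Proposition~\ref{useful5} for arbitrary $\phi\in\mathcal{L}_{I^{d},G}$. Axioms (G1)--(G4) make $G$ transparent for $\neg$, $\wedge$, $I^{d}$ and $\Box$. No axiom relates $GG\psi$ to $G\psi$, although semantically $t(G\psi)=t(\psi)$. In fact $Gp\rightarrow GGp$ is not derivable in \texttt{HDI}. To see this, read $G\chi$ as ``$t(\chi)\sqsubseteq\mathfrak{K}$ and $G$ does not occur in $\chi$'', and use this reading of $G$ inside the clause for $I^{d}$ as well. Every axiom and rule of \texttt{HDI} stays sound under this reading, including (HIR), (A2$_{I^{d}}$) and (A3$_{I^{d}}$), yet $Gp\wedge\neg GGp$ becomes satisfiable.

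Hence some $\Gamma_{0}$ contains both $Gp$ and $\neg GGp$. In its canonical model, with your labelling, $t^{C}(Gp)=t^{C}(p)=\mathfrak{K}^{C}$ although $GGp\notin\Gamma_{0}$, so the proposition fails at $\phi=Gp$, $\Gamma=\Gamma_{0}$. Your argument therefore proves the claim only for $G$-free $\phi$, where Proposition~\ref{useful5} does follow from (G1)--(G4) by induction on $\phi$. The statement as written needs an extra axiom such as $GG\phi\leftrightarrow G\phi$. This is not cosmetic: the $G$-case of the Truth Lemma invokes the proposition for arbitrary $\psi$, including formulas that contain $G$.
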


\begin{proof}

The proof is the same as of Corollary 10 \citep{Ozgun2023} We just replace $K\bar{\phi}$ with $G\phi$, Lemma 9 by Lemma \ref{useful5}, and Ax3$_{K}$ by (G5).

\end{proof}

\begin{lemma}[Truth Lemma]
\label{TLHDI}

Let $\mathcal{M}^{C} = \{W^{C}, R^{C}, v^{C}, \mathcal{T}^{C}\}$ be the canonical model for $\Gamma_{0}$. Then, for all $\phi \in \mathcal{L}_{I^{d}, G}$ and $\Gamma \in W^{C}$, we have $\mathcal{M}^{C}, \Gamma \models \phi$ iff $\phi \in \Gamma$.

\end{lemma}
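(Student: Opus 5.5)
We argue by induction on the complexity of $\phi$. Propositional variables hold by the definition of $v^{C}$, and the Boolean cases are standard. The $\Box$ case is standard as well: $W^{C}$ is the $\sim$-class of $\Gamma_{0}$, $\sim$ is an equivalence relation, and Proposition \ref{equivIW} (whose proof carries over verbatim to \texttt{HDI}) shows that $\Box\psi \in \Gamma$ iff $\psi \in \Delta$ for all $\Delta \in W^{C}$. The $G$ case is immediate from Proposition \ref{useful6}: $\mathcal{M}^{C}, \Gamma \models G\psi$ iff $t^{C}(\psi) \sqsubseteq \mathfrak{K}^{C}$ iff $G\psi \in \Gamma$. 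That proposition itself relies on (G5) to make membership of $G\psi$ uniform across $W^{C}$. The real work is the case $\phi := I^{d}\psi$.

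For ($\Leftarrow$), we assume $I^{d}\psi \in \Gamma$.
\begin{itemize}
\item By (A3$_{I^{d}}$), $G\psi \in \Gamma$, so Proposition \ref{useful6} gives $t^{C}(\psi) \sqsubseteq \mathfrak{K}^{C}$.
\item By (A1$_{I^{d}}$), $\psi \in \Gamma$, and by the induction hypothesis $\mathcal{M}^{C}, \Gamma \models \psi$.
\item Let $\Delta \neq \Gamma$ with $\Gamma R^{C} \Delta$. Then $\neg\psi \in \Gamma[I^{d}] \subseteq \Delta$, so by the induction hypothesis $\mathcal{M}^{C}, \Delta \models \neg\psi$.
\end{itemize}
Hence $\mathcal{M}^{C}, \Gamma \models I^{d}\psi$. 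This direction is routine.

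For ($\Rightarrow$), we argue contrapositively: assume $I^{d}\psi \notin \Gamma$ and show $\mathcal{M}^{C}, \Gamma \not\models I^{d}\psi$.
\begin{itemize}
\item If $G\psi \notin \Gamma$, then $t^{C}(\psi) \not\sqsubseteq \mathfrak{K}^{C}$ by Proposition \ref{useful6}, and we are done.
\item If $\psi \notin \Gamma$, then $\mathcal{M}^{C}, \Gamma \not\models \psi$ by the induction hypothesis, and we are done.
\item The remaining case is $G\psi \in \Gamma$, $\psi \in \Gamma$ and $I^{d}\psi \notin \Gamma$. Here we need a world $\Delta \neq \Gamma$ with $\Gamma R^{C}\Delta$ and $\psi \in \Delta$.
\end{itemize}
Proposition \ref{useful8} is built for this case. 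Reading its ``$\cap$'' as the intended union $\{\psi\} \cup \Gamma[I^{d}]$, it says this set is consistent. Its proof uses (HIR), Proposition \ref{useful7} and $G\psi$ to derive $I^{d}\psi$ from any inconsistency. By Lemma \ref{LindID} the set extends to a maximal consistent $\Delta$, and $\Gamma[I^{d}] \subseteq \Delta$ gives $\Gamma R^{C} \Delta$.

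Two gaps remain, and these are the main obstacle.

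\textbf{First gap: $\Delta \in W^{C}$.} We need $\Gamma_{0} \sim \Delta$, i.e.\ $\Gamma[\Box] \subseteq \Delta$. To get it, we enlarge the set to $\{\psi\} \cup \Gamma[I^{d}] \cup \Gamma[\Box]$ and redo the consistency argument. Suppose $\vdash (\psi \wedge \bigwedge \neg\psi_{k} \wedge \beta) \rightarrow \bot$ with $\beta \in \Gamma[\Box]$. Then $\vdash \beta \rightarrow (\psi \rightarrow \bigvee \psi_{k})$. By S5 reasoning, $\Box(\psi \rightarrow \bigvee\psi_{k}) \in \Gamma$. One then wants a $\Box$-relativised form of (HIR); if it is not derivable, fall back on adjusting the construction of $W^{C}$. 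This is the point I expect to require the most care.

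\textbf{Second gap: $\Delta \neq \Gamma$.} When $\Gamma[I^{d}]$ is nonempty, this is handled by the second half of Proposition \ref{useful8}. When $\Gamma[I^{d}] = \emptyset$, every world in $W^{C}$ is an $R^{C}$-successor of $\Gamma$. We then need some $\Delta \neq \Gamma$ in $W^{C}$ containing $\psi$, which requires $\psi \wedge \neg\chi$ to be consistent with $\Gamma[\Box]$ for some $\chi \in \Gamma$. The plan is to use a fresh propositional variable (or a formula on which $\Gamma$ takes a definite value) to separate the two sets. If that fails, the fallback is the standard duplication trick: take two copies of each maximal consistent set, so that a distinct successor always exists.

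With both gaps closed, the induction hypothesis gives $\mathcal{M}^{C}, \Delta \models \psi$, and so $\mathcal{M}^{C}, \Gamma \not\models I^{d}\psi$.
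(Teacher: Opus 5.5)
Your induction, the ($\Leftarrow$) direction, and the reduction of ($\Rightarrow$) to finding $\Delta\neq\Gamma$ in $W^{C}$ with $\Gamma R^{C}\Delta$ and $\psi\in\Delta$ all follow the paper's proof. So do the use of Proposition~\ref{useful8} and the fresh-variable step. The two points you flag are exactly where the paper's proof is silent: it never checks $\Gamma[\Box]\subseteq\Gamma'$, and its fresh-variable step ignores $\Gamma[\Box]$. You leave both open behind conditional fallbacks, so the proposal is not a proof. Neither gap can be closed for the model as defined, because the lemma is false.

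For the first gap, the $\Box$-relativised (HIR) you want is, e.g., $(\Box(p\rightarrow q)\wedge p\wedge I^{d}q\wedge Gp)\rightarrow I^{d}p$. It is valid for global $\Box$, but it is not derivable in \texttt{HDI}:
\begin{itemize}
\item Every axiom and rule of \texttt{HDI} stays sound when $\Box$ is read via an arbitrary equivalence relation $E$. (G3) and (G5) only concern world-independent topics, and (HIR) only uses global validity of $\phi\rightarrow\psi$.
\item On $W=\{u,u'\}$ with $E$ the identity, $R=\{(u,u')\}$, $v(p)=W$, $v(q)=\{u\}$ and all topics grasped, the formula fails at $u$.
\item Let $\Gamma_{0}$ be the theory of $u$. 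Then $\Gamma_{0}[\Box]=\Gamma_{0}$, so $W^{C}=\{\Gamma_{0}\}$. Also $\neg q\in\Gamma_{0}[I^{d}]$ while $q\in\Gamma_{0}$, so $R^{C}=\emptyset$.
\item Hence $\mathcal{M}^{C},\Gamma_{0}\models I^{d}p$, yet $I^{d}p\notin\Gamma_{0}$.
\end{itemize}
Every model with global $\Box$ validates that formula, so adjusting $W^{C}$ cannot help. \texttt{HDI} is incomplete as axiomatized and needs the relativised rule as an extra axiom.

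The second gap fails even for sound extensions of \texttt{HDI}. Take $W=\{u_{1},u_{2}\}$, $R=W\times W$, every variable true everywhere, and all topics grasped.
\begin{itemize}
\item Swapping $u_{1}$ and $u_{2}$ is an automorphism, so both worlds share one theory $\Gamma_{0}$, which is maximal consistent by soundness.
\item Then $\Box\chi\in\Gamma_{0}$ iff $\chi\in\Gamma_{0}$. So $\Gamma_{0}[\Box]$ decides every formula, including every fresh variable, and $W^{C}=\{\Gamma_{0}\}$.
\item No $I^{d}\phi$ holds at $u_{1}$, so $\Gamma_{0}[I^{d}]=\emptyset$ and $\Gamma_{0}R^{C}\Gamma_{0}$.
\item Again $\mathcal{M}^{C},\Gamma_{0}\models I^{d}p$ but $I^{d}p\notin\Gamma_{0}$.
\end{itemize}
Duplicating maximal consistent sets does repair this case, but it changes the model. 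Together with the extra axiom it would prove a corrected completeness result, not the lemma as stated.

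A smaller point: Proposition~\ref{useful6}, which you cite for the $G$ case, holds only after swapping $a$ and $b$ in the definition of $\mathcal{T}^{C}$. As written, $\mathfrak{K}^{C}$ collects the variables $p$ with $\neg Gp\in\Gamma_{0}$.
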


\begin{proof}

The proof is by induction on the complexity of $\phi$. The cases for propositional variables, boolean operators and $\Box$ are standard. The case of $G$ follows straightforwardly from Proposition \ref{useful6} and definition of $G$. The case of $\phi : = I^{d} \psi$ is as follows.

\begin{itemize}

\item[($\Rightarrow$)] Assume $I^{d} \psi \not \in \Gamma$. Clearly, either $t^{C}(\psi) \sqsubseteq \mathfrak{K}^{C}$, or $t(\psi) \not \sqsubseteq \mathfrak{K}^{C}$. In the second case, we get $\mathcal{M}, \Gamma \not \models I^{d} \psi$ by the definition of $I^{d}$. Thus, assume that $t^{C}(\psi) \sqsubseteq \mathfrak{K}^{C}$, i.e., $G \psi \in \Gamma$. Since $\Gamma$ is maximal, either $\psi$ or $\neg \psi \in \Gamma$. In the second case, $\psi \not \in \Gamma$ and thus, by induction hypothesis, $\mathcal{M}^{C}, \Gamma \not \models \psi$. Then, $\mathcal{M}^{C}, \Gamma \not \models I^{d} \psi$.

Let $\psi \in \Gamma$, which means that $\mathcal{M}^{C}, \Gamma \models \psi$. From Proposition \ref{useful8}, $\{\psi\} \cap \Gamma[I^{d}]$ is consistent and contained in some $\Gamma'$. Thus, by definition of $R^{C}$, $\Gamma R^{C} \Gamma'$ and $\mathcal{M}^{C}, \Gamma' \models \psi$. When $\Gamma[I^{d}] \not = \emptyset$, $\Gamma \not = \Gamma'$, and thus $\mathcal{M}^{C}, \Gamma \not \models I^{d} \psi$.

If $\Gamma[I^{d}] = \emptyset$, then consider any $p$ which does not occur in $\psi$. Either $p \in \Gamma$ or $\neg p \in \Gamma$. Having in mind that $p$ does not occur in $\psi$, we have that $\{\psi \wedge p\}$ and $\{\psi \wedge \neg p\}$ are consistent. Then, whether it is $p \in \Gamma$ or $\neg p \in \Gamma$, we have that there is some $\Gamma'$ such that $\psi \in \Gamma'$ and $\Gamma' \not = \Gamma$. By induction hypothesis $\mathcal{M}^{C}, \Gamma' \models \psi$. Since $\Gamma[I^{d}] = \emptyset$, $\Gamma[I^{d}] \subseteq \Gamma'$, and $\Gamma R^{C} \Gamma'$. Thus, $\mathcal{M}^{C}, \Gamma \models \psi$ but $\mathcal{M}^{C}, \Gamma' \models \psi$ for some $\Gamma' \not = \Gamma$ and $\Gamma R^{C} \Gamma'$. Thus, $\mathcal{M}^{C}, \Gamma \not \models I^{d} \psi$.

\item[($\Leftarrow$)] Assume $I^{d} \psi \in \Gamma$. From (A3$_{I^{d}}$), we have $G \psi \in \Gamma$, that is $t^{C}(\psi) \sqsubseteq \mathfrak{K}^{C}$. By (A1$_{I^{d}}$), $\psi \in \Gamma$ and thus $\mathcal{M}^{C}, \Gamma \models \psi$. Consider and arbitrary $\Gamma' \in W$ such that $\Gamma R^{C} \Gamma'$. Then, $\Gamma[I^{d}] \subseteq \Gamma'$, and thus $\neg \psi \in \Gamma'$. Since $\Gamma' \in W$, and thus it is consistent, $\psi \not \in \Gamma'$, and thus $\mathcal{M}^{C}, \Gamma' \not \models \psi$. Thus, $\mathcal{M}^{C}, \Gamma \models I^{d} \psi$.

\end{itemize}

\end{proof}

\begin{theor}

The system $\mathtt{HDI}$ is complete.

\end{theor}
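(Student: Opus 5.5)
The completeness of $\mathtt{HDI}$ follows the same canonical-model pattern as for $\mathtt{HIW}$ and $\mathtt{HIU}$. We argue by contraposition. Assume $\not\vdash \phi$, so $\{\neg\phi\}$ is $\mathtt{HDI}$-consistent. By Lemma \ref{LindID} it extends to a maximal consistent set $\Gamma_{0}$ with $\neg\phi \in \Gamma_{0}$, hence $\phi \notin \Gamma_{0}$.

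We then take the canonical model $\mathcal{M}^{C}$ for $\Gamma_{0}$ of Definition \ref{canmodID}. Before applying the Truth Lemma, we check that this is a legitimate topic-sensitive model.
\begin{itemize}
\item $W^{C}$ is non-empty, since $\Gamma_{0} \sim \Gamma_{0}$ by reflexivity of $\sim$. Reflexivity holds because $\Box$ is S5, so $\Box\chi \rightarrow \chi$ gives $\Gamma_{0}[\Box] \subseteq \Gamma_{0}$.
\item $\oplus^{C}$ is idempotent, commutative and associative. This is immediate from its two-element table, which makes $T^{C}$ the join semilattice with $b \sqsubseteq a$.
\item $t^{C}$ is well defined. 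Every variable $p$ lies in exactly one of $a$ or $b$, because exactly one of $Gp$, $\neg Gp$ belongs to $\Gamma_{0}$.
\end{itemize}
Note also that $\sqsubseteq$ makes $t^{C}(\psi) \sqsubseteq \mathfrak{K}^{C}$ hold exactly when every variable of $\psi$ is assigned the grasped topic $\mathfrak{K}^{C}$. This is precisely the content of Proposition \ref{useful6}, combined with (G5) so that membership of $G\psi$ does not depend on which $\Gamma \in W^{C}$ we look at.

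With the model in place, Lemma \ref{TLHDI} applies to $\Gamma_{0} \in W^{C}$. Since $\phi \notin \Gamma_{0}$, we get $\mathcal{M}^{C}, \Gamma_{0} \not\models \phi$. So $\phi$ is not valid in the class of topic-sensitive models, which proves completeness in its contrapositive form.

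The real work is already in Lemma \ref{TLHDI} and Proposition \ref{useful8}, so the only delicate point here is the well-definedness of the canonical topic model. In particular, for variables the assignment $t^{C}$ must agree with the convention $\mathfrak{K}^{C}$ used in Proposition \ref{useful6}: the grasped variables, those with $Gp \in \Gamma_{0}$, must be sent to $\mathfrak{K}^{C}$, and the others to the strictly larger topic. If the labels $a, b$ in Definition \ref{canmodID} are read the other way round, I would first relabel them so that this holds; the argument then goes through unchanged.
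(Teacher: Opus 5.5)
\textbf{The gap is in the Truth Lemma you defer to, and in fact $\mathtt{HDI}$ as axiomatized is not complete.} You follow the paper's route exactly: Lindenbaum, the canonical model of Definition~\ref{canmodID}, then Lemma~\ref{TLHDI}. Your remark on the labels is correct: as printed, $a$ collects the variables with $Gp \in \Gamma_{0}$ while $\mathfrak{K}^{C} = b \sqsubseteq a$, so the two topics must be swapped. The problem lies in the part you treat as settled. In the $I^{d}$ case of Lemma~\ref{TLHDI}, Proposition~\ref{useful8} only extends $\{\psi\} \cup \Gamma[I^{d}]$ to \emph{some} maximal consistent $\Gamma'$. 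A world of $\mathcal{M}^{C}$ must also contain $\Gamma[\Box] = \Gamma_{0}[\Box]$, and no axiom of $\mathtt{HDI}$ relates $\Box$ to $I^{d}$. The fresh-variable step for $\Gamma[I^{d}] = \emptyset$ has the same defect, since $\Box(\psi \rightarrow p)$ may belong to $\Gamma$.

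This cannot be patched within $\mathtt{HDI}$. Let $\chi := (\Box(p \rightarrow q) \wedge p \wedge Gp \wedge I^{d} q) \rightarrow I^{d} p$. It is valid on topic-sensitive models: $\Box$ is global, so every $R$-successor $w' \neq w$ refuting $q$ also refutes $p$. But it is not a theorem. Every axiom and rule of $\mathtt{HDI}$, (HIR) included, stays sound when $\Box$ quantifies over the cell of an arbitrary equivalence relation $S$. Take $W = \{w, v\}$, $S$ the identity, $R = \{(w, v)\}$, $p$ true at $w$ and $v$, $q$ true only at $w$, and all topics grasped; then $\chi$ fails at $w$. Accordingly, consider the canonical model for any $\Gamma_{0} \ni \neg\chi$, with your relabeling. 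Every $R^{C}$-successor in $W^{C}$ contains $\neg q$ and $p \rightarrow q$, hence $\neg p$. So $\mathcal{M}^{C}, \Gamma_{0} \models I^{d} p$ although $I^{d} p \notin \Gamma_{0}$.

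A second, independent failure sits in Proposition~\ref{useful5}, on which your appeal to Proposition~\ref{useful6} rests. (G1)--(G4) say nothing about $G$ inside the scope of $G$. Hence $Gp \rightarrow GGp$ is valid, since $t(Gp) = t(p)$, but underivable: assign every $G$-formula a fixed ungrasped topic, and all axioms remain sound. So the Truth Lemma already fails for $GGp$.

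Thus $\mathtt{HDI}$ is not complete for the stated semantics, and no amount of care about the canonical topic model rescues the argument. A correct version needs further axioms. At a minimum it needs a boxed form of (HIR), $\Box(\phi \rightarrow \psi) \rightarrow (\phi \rightarrow (I^{d}\psi \rightarrow (G\phi \rightarrow I^{d}\phi)))$, which is exactly what makes $\{\psi\} \cup \Gamma[I^{d}] \cup \Gamma[\Box]$ consistent. It also needs a principle like $GG\phi \leftrightarrow G\phi$, and a separate treatment of the case $\Gamma[I^{d}] = \emptyset$.
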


\begin{proof}

Let $\not \vdash \phi$. Then $\{\neg \psi\}$ is consistent. By Lemma \ref{LindID}, there exists a maximal consistent set $\Gamma_{0}$ s.t. $\phi \not \in \Gamma_{0}$. By Lemma \ref{TLHDI}, we have $\mathcal{M}^{C}, \Gamma_{0} \not \models \phi$.

\end{proof}

\thebibliography{99}

\bibitem[Berto(2019)] {Berto2019} Berto, F. (2019). Simple hyperintensional belief revision. \textit{Erkenntnis}, 84(3): 559-575.

\bibitem[Berto \& Hawke(2021)] {BertoH2021} Berto, F., Hawke, P. (2021). Knowability relative to information. \textit{Mind}, 130(157): 1-33.

\bibitem[Berto \& Nolan(2023)] {BertoSEP} Berto, F., Nolan, D. (2023). Hyperintensionality. \textit{The Stanford Encyclopedia of Philosophy} (Winter 2023 Edition), Edward N. Zalta \& Uri Nodelman (eds.), URL = https://plato.stanford.edu/archives/win2023/entries/hyperintensionality/.

\bibitem[Berto \&  \"{O}zg\"{u}n(2021)] {Berto2021} Berto, F.,  \"{O}zg\"{u}n, A. (2021). Dynamic hyperintensional belief revision. \textit{The Review of Symbolic Logic}, 14(3): 766-811.

\bibitem[Cresswell(1975)] {Cresswell1975} Cresswell, M. J. (1975). Hyperintensional logic. \textit{Studia Logica}, 34: 25-38.

\bibitem[Fagin et al.(1995)] {Fagin1995} Fagin, R., Halpern, J. Y., Moses, Y., Vardi, M. (1995). \textit{Reasoning about Knowledge}. Cambridge, MA: MIT Press.

\bibitem[Fan (2015)] {Fan2015a} Fan, J. (2015). Logics of essence and accident. arXiv:1506.01872.

\bibitem[Fan et al.(2015)] {Fan2015} Fan, J., Wang, Y., van Ditmarsch, H. (2015). Contingency and knowing whether. \textit{The Review of Symbolic Logic}, 8(1): 75-107.

\bibitem[Fan(2021)] {Fan2021} Fan, J. (2021). A logic for disjunctive ignorance. \textit{Journal of Philosophical Logic}, 50: 1293-1312.

\bibitem[Fine(2017)] {Fine2017} Fine, K. (2017). Truthmaker semantics. In \textit{Companion to the Philosophy of Language} (Vol. 2), Bob Hale, Crispin Wright, and Alexander Miller (eds.), 2nd ed., Chichester, UK: Wiley Blackwell, pp. 556-577.

\bibitem[Gilbert et al.(2022)] 
{Gilbert2021} Gilbert, D., Kubyshkina, E., Petrolo, M., Venturi, G. (2022). Logics of ignorance and being wrong. \textit{Logic journal of the IGPL}, 30(5): 870-885.

\bibitem[Gilbert \& Venturi(2016)] {Gilbert2016} Gilbert, D., Venturi, G. (2016). Reflexive-insensitive modal logics. \textit{Review of Symbolic Logic}, 9(1): 167-180.

\bibitem[Hawke et al.(2020)] {Ozgun2020} Hawke, P., \"{O}zg\"{u}n, A., Berto, F. (2020). The fundamental problem of logical omniscience. \textit{Journal of Philosophical Logic}, 49: 727-766.

\bibitem[Hintikka(1962)] {Hintikka1962} Hintikka, J. (1962). \textit{Knowledge and belief}. Ithaca, N. Y.: Cornell University Press.

\bibitem[van der Hoek \& Lomuscio(2004)] 
{Hoek2004} van der Hoek, W., Lomuscio, A. (2004). A logic for ignorance. \textit{Electronic Notes in Theoretical Computer Science}, 85(2): 117-133.

\bibitem[Kubyshkina et Petrolo(2020)] {Kubyshkina2020} Kubyshkina, E., Petrolo, M. (2020). What ignorance could not be. \textit{Principia: an international journal of epistemology}, 24(2): 247-254. 

\bibitem[Kubyshkina \& Petrolo(2021)] 
{Kubyshkina2021} Kubyshkina, E., Petrolo, M. (2021). A logic for factive ignorance. \textit{Synthese}, 198: 5917-5928.

\bibitem[Leitgeb(2019)] {Leitgem2019} Leitgeb, H. (2019). HYPE: A system of hyperintensional logic (with an application to semantic paradoxes). \textit{Journal of Philosophical Logic}, 48(2): 305-405.

\bibitem[Le Morvan(2012)] {LeMorvan2012} Le Morvan, P. (2012). On ignorance: A vindication of the standard view. \textit{Philosophia}, 40(2): 379-393.

\bibitem[Peels(2012)] {Peels2012} Peels, R. (2012). The new view on ignorance undefeated. \textit{Philosophia}, 40(4): 741-750.

\bibitem[Peels(2020)] {Peels2020} Peels, R. (2020). Asserting ignorance. In \textit{The Oxford Handbool of Assertion}, Sanford Goldberg (ed.), Oxford: Oxford University Press, pp. 605-624.

\bibitem[Peels(2023)] {Peels2023} Peels, R. (2023). \textit{Ignorance: A philosophical study}. Oxford University Press.

\bibitem[Pritchard(2021)] {Pritchard2021} Pritchard, D. (2021). Ignorance and inquiry. \textit{American Philosophical Quarterly}, 58(2): 111-123.

\bibitem[Rossi \& \"{O}zg\"{u}n(2023)] {Ozgun2023} Rossi, N., \"{O}zg\"{u}n, A. (2023). A hyperintensional approach to positive epistemic possibility. \textit{Synthese}, 202, 44, Online first.

\bibitem[Stalnaker(1984)] {Stalnaker1984} Stalnaker, R. C. (1984). \textit{Inquiry}. Cambridge, MA: MIT Press.

\bibitem[Steinsvold(2008)] 
{Steinsvold2008} Steinsvold, C. (2008). A note on logics of ignorance and borders. \textit{Notre Dame Journal of Formal Logic}, 49(4): 385-392.

\bibitem[Steinsvold(2011)] {Steinsvold2011} Steinsvold, C. (2011). Being wrong: logics for ignorance. \textit{Notre Dame Journal of Formal Logic}, 52: 245-253.

\bibitem[Wansing(2017)] {Wansing2017} Wansing, H. (2017). Remarks on the logic of imagination. A step towards understanding doxastic control through imagination. \textit{Synthese}, 194(8): 2843-2861.

\bibitem[van Woudenberg(2009)] {vanWoudenberg2009} van Woudenberg, R. (2009). Ignorance and Force: Two excusing conditions for false beliefs. \textit{American Philosophical Quarterly}, 46(4): 373-386.

\bibitem[Yablo(2014)] {Yablo2014} Yablo, S. (2014). \textit{Aboutness}. Princeton, NJ: Princeton University Press.

\bibitem[Zimmerman(2018)] {Zimmerman2018} Zimmerman, M. J. (2018). Peels on ignorance as moral excuse. \textit{International Journal of Philosophical Studies}, 26(4): 225-232.

\end{document}